\title{\large{\textbf     {DECOUPLING INEQUALITIES FOR THE\\ 
 GINZBURG-LANDAU $\nabla \varphi$ MODELS 
}}}
\date{}
\numberwithin{equation}{section}
\newtheorem{thm}{Theorem}[section]
\newtheorem{lem}[thm]{Lemma}
\newtheorem{proposition}[thm]{Proposition}
\newtheorem{cor}[thm]{Corollary}
\theoremstyle{remark}
\theoremstyle{definition}
\newtheorem{remark}[thm]{Remark}
\begin{document}

\maketitle

\begin{center}
\vspace{-1.2cm}
Pierre-Fran\c cois Rodriguez$^1$ 

\vspace{1.3cm}
Preliminary draft\\
\vspace{1.3cm}
\textbf{Abstract}
\end{center}
\noindent 
We consider a class of of massless gradient Gibbs measures, in dimension greater or equal to three, and prove a decoupling inequality for these fields. As a result, we
obtain detailed information about their geometry, and the percolative and non-percolative phases of their level sets, thus generalizing results obtained in~\cite{RoS13}, to the non-Gaussian case. Inequalities of similar flavor have also been successfully used in the study of random interlacements~\cite{Sz12a},~\cite{PT12}. A crucial aspect is the development of a suitable sprinkling technique, which relies on a particular representation of the correlations in terms of a random walk in a dynamic random environment, due to Helffer and Sj\"ostrand. The sprinkling can be effectively implemented by studying the Dirichlet problem for the corresponding Poisson equation, and quantifiying in how far a change in boundary condition along a sufficiently ``small'' part of the boundary affects the solution. Our results allow for uniformly convex potentials, and extend to non-convex perturbations thereof.
\thispagestyle{empty}

\vspace{6cm}

\begin{flushleft}

$^1$Department of Mathematics \hfill December 2016 \\
University of California, Los Angeles \\
520, Portola Plaza, MS 6172\\
Los Angeles, CA 90095 \\
\texttt{rodriguez@math.ucla.edu}
\end{flushleft}

\vspace{2cm}

\newpage
\mbox{}
\thispagestyle{empty}
\newpage

\section{Introduction}

A considerable effort has recently gone into understanding various strongly correlated occupation fields and their associated percolation phase transition, originating in the works \cite{Sz10} and \cite{Sz12a}, see also \cite{PT12}, on random interlacements, leading up to a series of recent articles \cite{DRS14.2}, \cite{DRS14.3} \cite{PRS15}, and \cite{Sa14}, which provide a very detailed picture of the geometry in the (strongly) supercritical regime, for a broad class of models and under a rather general set of assumptions. A common feature among all of these works is their crucial reliance on the availability of a so-called \textit{decoupling inequality}, which is a certain way to quantify the decay of correlations, for models with long-range dependence, with far-reaching consequences. The purpose of this work is to derive such a correlation inequality for convex gradient interface models (and non-convex perturbations thereof), in all dimensions $d\geq 3$, see for instance \cite{Fu05}, \cite{Ve06} for an introduction to the subject, thus generalizing results obtained in~\cite{RoS13}, see also \cite{PR13}, to the anharmonic case. 

Let us explain the essence of the decoupling technique more precisely. Suppose that $\mu$ is a gradient Gibbs measure on $\mathbb{Z}^d$, $d\geq3$, obtained as the weak limit of a sequence of finite volume approximations, with $0$-boundary conditions (i.e., zero tilt), and $\varphi$ the corresponding canonical field, see Section \ref{S:1} for precise definitions. For now, the reader may think of the interaction potential $V= V(\nabla \varphi)$ as being a uniformly convex two-body interaction. Formally, $\mu$ is a probability measure on $\Omega= \mathbb{R}^{\mathbb{Z}^d}$, with
\begin{equation}\label{eq:intro1}
d\mu(\varphi) \propto e^{-\beta \sum_{x\sim y}V(\varphi_x-\varphi_y)} d\varphi, \text{ with } c \leq V'' \leq c', \text{ for some $c,c' \in (0,\infty)$}.
\end{equation}
We will typically set $\beta=1$ and omit it from the notation, but our setup will allow more generally for (finite-range) multi-body interactions, subject to a suitable random walk representation condition, in order to eventually handle the case of non-convex perturbations (for which $\beta$ becomes relevant). For a massless $\mu$ as in \eqref{eq:intro1}, it has been known at least since the work of Naddaf and Spencer \cite{NS97} that correlations have slow polynomial decay, i.e. $\mathbb{E}_{\mu}[\varphi_x\varphi_y] \sim |x-y|^{2-d}$, as $|x-y| \to \infty$. More generally, if $f=f((\varphi_x)_{x\in S})$ and $g((\varphi_y)_{y\in S'})$ are two local observables, with $f,g \in L^{\infty}(\Omega)$, $S = B(x,L)$, the $\ell^{\infty}$-box around $x$ and $S'= B(x', L)$, with $|x-x'| = RL$, for some large $R$, then one cannot hope for a better bound than
\begin{equation}\label{eq:intro2}
\textnormal{Cov}_{\mu}(f,g) \sim R^{-(d-2)}
\end{equation}
(see for instance Prop.~1.1 in \cite{PR13} for a precise statement in the Gaussian case $V(\eta)=\eta^2$). In particular, if one thinks of $R$ as a being large, but fixed, and sending $L \to \infty$, this does not decay at all. 

As we now explain, the situation can become drastically different, if at least one of the observables depends \textit{monotonically} on some external parameter, and one is willing to adjust this parameter a bit, an idea often referred to as \textit{sprinkling} in the literature. Specifically, suppose that $f = f^h  = 1_{A^h}$, where $A^h(\varphi)$ is an increasing event, measurable with respect to the occupation variables $(1\{\varphi_x \geq h \}_{x\in S})$ at level $h$, for some $h \in \mathbb{R}$, then our main results, cf. Theorem \ref{P:dec_ineq} and \ref{T:dec_ineq2} below, imply that
\begin{equation}\label{eq:intro3}
\mathbb{E}_{\mu}[f^h  g] \leq  \mathbb{E}_{\mu}[f^{h-\varepsilon}] \cdot \mathbb{E}_{\mu}[ g] + \Vert g\Vert_{L^{\infty}} \cdot\delta_{S,S'}(\varepsilon)
\end{equation}
and the error term $\delta_{S,S'}(\cdot)$ can for instance be made as small as $e^{-L^{\alpha}}$, if $\varepsilon > R^{-\beta}$, for some $\alpha, \beta > 0$ (under the assumptions preceding \eqref{eq:intro2}). Note that \eqref{eq:intro3} is quite sharp, for if $g$ is increasing in $\varphi$, then the left-hand side of \eqref{eq:intro3} is bounded from below by $\mathbb{E}_{\mu}[f^h]\cdot \mathbb{E}_{\mu}[g]$, due to the FKG-inequality (which holds for $\mu$ as in \eqref{eq:intro1}).

We now describe the mechanism behing our sprinkling technique. The inequality \eqref{eq:intro3} amounts to saying that the conditional law $\mathbb{E}_{\mu}[f^h |\mathcal{F}_{S'} ]$, where $\mathcal{F}_{S'}=\sigma(\varphi_x, x\in S')$, is suitably close to $\mathbb{E}_{\mu}[f^{h-\varepsilon}]$. In the Gaussian case, see \cite{RoS13}, and also \cite{PR13}, this comparison is made reasonably straightforward by harnessing the fact that the Gaussian free field (GFF) in $\Lambda \subseteq \mathbb{Z}^d$ with $\Lambda \supset S'$ admits the decomposition
\begin{equation}\label{eq:harmonic_ext}
\begin{split}
&\text{GFF on $\Lambda \setminus S'$ with b.c. $\psi$ } \\
&\qquad \stackrel{\text{law}}{=} \text{GFF on $\Lambda \setminus S'$ with $0$ b.c. + harmonic extension of $\psi$ to $\Lambda \setminus S'$, }
\end{split} 
\end{equation}
which readily yields an explicit formula for conditional distributions (due to the Gibbs nature of $\mu$, conditioning on $\mathcal{F}_{S'}$ manifests itself as a boundary condition). A considerable effort is devoted to finding a suitable replacement for this Markovian structure of the GFF (incidentally, similar issues were faced in \cite{Mi11} for the analysis of fluctuations in a bounded domain in dimension $2$, cf. in particular Theorem 1.2 therein). One is naturally led to wonder how close the GFF and the anharmonic model really are, see Remark \ref{R:final}, 2), for more on this.

Our approach for general $\mu$, developed in Section \ref{S:sprinkle}, see in particular Proposition \ref{L:monot}, is based on an interpolation argument, by which the field $\varphi_{|{\partial_{\text{i}} S'}}$ ($\partial_{\text{i}} S'$ stands for the interior boundary of $S'$), felt in $\Lambda\setminus S'$ as a boundary condition upon conditioning on $\mathcal{F}_{S'}$, is progressively replaced by an independent copy $\widetilde{\varphi}_{|{\partial_{\text{i}} S'}}$. This interpolation has undesirable properties, since $\widetilde{\varphi}$ is signed, thus blending it in will generically not act monotonically towards producing a desired (upper, in our case) bound for $\mathbb{E}_{\mu}[f^h |\mathcal{F}_{S'} ]$. The idea is that this problem can be controlled by introducing a \textit{balancing} effect, by which the field on $\partial \Lambda$, the outer boundary of $\Lambda$, is given a slight ``push'' upward by $\varepsilon$, which is carried through $\Lambda$ due to the gradient nature of $\varphi$. Crucially, the competing influence of the varying boundary conditions along $\partial_{\text{i}}S'$ and $\partial\Lambda$ can be quantified using the Helffer-Sj\"ostrand representation~\cite{HS94}, which has already enjoyed great success in the analysis of this model in the past, cf. for instance \cite{NS97}, \cite{FS97}, \cite{DGI00}, and \cite{GOS01}. This representation allows to effectively rephrase the problem as studying the effect of varying boundary data on the solution of a certain discrete elliptic problem in $\Lambda' = \Lambda \setminus S'$ (which the reader should regard as the analogue of the harmonic extension in the Gaussian case), and, representing this solution probabilistically, one can achieve the desired balance by controlling a single quantity, $\Sigma_{\Lambda}= \Sigma_{\Lambda}(S,S')$, which we call (probabilistic) \textit{cross-section} of $S'$, viewed from $S$, defined as
\begin{equation}\label{eq:intro4}
\Sigma_{\Lambda}(S,S')= \sup_{x \in S} \sup_{\varphi,\xi }\frac{\mathbf{P}_{x,\varphi}^{\mathcal{G}_{\Lambda'},\xi}[H_{S'} < H_{\Lambda^c} ]}{1- \mathbf{P}_{x,\varphi}^{\mathcal{G}_{\Lambda'}, \xi} [H_{S'} < H_{\Lambda^c} ]},
\end{equation} 
where $\mathbf{P}_{x,\varphi}^{\mathcal{G}_{\Lambda'},\xi}$ denotes the annealed law of the associated random walk (which is a jump process among random time-varying conductances, cf. Section \ref{S:1}) started at $x$ (with initial field configuration $\varphi$ and boundary data $\xi$, which determine the evolution of the environment) and $H_K$ denotes the entrance time of the walk in $K\subset\subset \mathbb{Z}^d$.
The cross section $\Sigma_{\Lambda}(S,S')$ can be seen as the correct way to measure the size of the boundary $\partial_{\text{i}}S'$ relative to $\partial \Lambda$, as seen from $S$, and it admits suitable bounds as $\Lambda \nearrow \mathbb{Z}^d$ in case the environment is uniformly elliptic, which follows from our assumptions on $V$ in \eqref{eq:intro3}. While  convenient, this assumption could probably be relaxed, as discussed below in Remark~ \ref{R:final},~1). 

As it turns out, the interpolation becomes unbalanced, and thus the sprinkling by $\varepsilon$ ineffective, roughly when either $\varphi_{|_{\partial_{\text{i}}S'}}$ or $\widetilde{\varphi}_{|_{\partial_{\text{i}}S'}}$ are of the order $\varepsilon \Sigma_{\Lambda}(S,S')^{-1}$, which is at the origin of the error term $\delta_{S,S'}$ appearing in \eqref{eq:intro3}. Note that sharp upper bounds on the cross-section $\Sigma_{\Lambda}$, as $\Lambda \nearrow \mathbb{Z}^d$, are key here because one typically thrives for a regime where
$\varepsilon \Sigma_{\Lambda}(S,S')^{-1} \gg 1$ (to make the sprinkling effective), while keeping $\varepsilon$ as small as possible.

 Bounding $\delta_{S,S'}$ suitably is the subject of Section \ref{S:error}, and the Brascamp-Lieb inequality~\cite{BL76} provides a well-needed concentration estimate to this effect (note that controlling the mean is not an issue because we have zero tilt). For many applications, including those discussed below, the control on the error term $\delta_{S,S'}$ needs to be relatively tight, which will follow from sufficiently careful comparison estimates between hitting distributions under $\mathbf{P}_{x,\varphi}^{\mathcal{G}_{\Lambda'},\xi}(\cdot)$ and those of simple random walk. Our corresponding results are in fact quenched, but annealed estimates would suffice.

The inequality \eqref{eq:intro3}, cf. Theorem \ref{P:dec_ineq}, has a host of applications, and in particular, far-reaching consequences regarding the geometry of the level sets of $\varphi$. Notably, see Theorems~ 
\ref{T:subcrit}, \ref{T:chem_dist} and \ref{T:RW_perc} below, we identify $h_+$ and $h_-$ satisfying $-\infty < h_- \leq h_+ < \infty$, such that
\begin{equation}\label{eq:intro5}
\begin{split}
&\text{for all $h> h_+$, the connectivity function $\mathbb{P}_{\mu}[x\stackrel{\geqslant h}{\longleftrightarrow}y]$, referring to the probability}\\
&\text{that $x$ and $y$ are connected by a nearest-neighbor path along which $\varphi \geq h$, has}\\
&\text{stretched exponential decay in $|y-x|$},
\end{split}
\end{equation}
and, assuming that $\mu$ is translation invariant (which does not seem a-priori clear),
\begin{equation} \label{eq:intro6}
\begin{split}
&\text{for all $h< h_-$, the set $\{ x; \varphi_x \geq h\}$ has a unique infinite cluster, on which large}\\
&\text{balls obey a shape theorem, and on which simple random walk satisfies}\\
&\text{a quenched invariance principle}
\end{split}
\end{equation}
(we refer the reader to Section \ref{S:apps} for precise statements). Results as in \eqref{eq:intro5} are by now routinely obtained by pairing inequality \eqref{eq:intro3} with a suitable static renormalization scheme. This circle of ideas goes back to \cite{Sz10} and \cite{Sz12a}. Moreover, \eqref{eq:intro6} follows from our decoupling inequality (actually an improved version, cf. Theorem \ref{T:dec_ineq2} below), in conjunction with the recent works \cite{DRS14.2}, \cite{DRS14.3} \cite{PRS15}, and \cite{Sa14}.
The thresholds $h_+$ and $h_-$ are conjectured to be equal (and corresponding to the critical point for percolation), which is an open problem, even for $V(\eta)=\eta^2$.

Finally, we extend some of our results, and in particular \eqref{eq:intro5}, see Theorem \ref{T:noncon}, to the case where $V$ is perturbed by a sufficiently small non-convex two-body potential $g$ with compact support (see \eqref{eq:nonconvex1}, \eqref{eq:noncon2} for precise definitions) and at zero tilt, i.e. one sets $V =U + g$ in \eqref{eq:intro1}, with $U$ uniformly convex. Examples satisfying our assumptions include $e^{-V(\eta)}= (\eta^2 + \frac12)e^{-\eta^2}$, which has a double well, and $e^{-V(\eta)}= \int \rho(d\kappa)e^{-\kappa \eta^2}$, a `log-mixture' of Gaussians, with $\rho$ compactly supported on $(0,\infty)$, at zero tilt, which was studied in \cite{BK07}, \cite{BS11} to show in particular that for suitable choice of $\rho$, the gradient Gibbs states on $\mathbb{Z}^2$ are not uniquely characterized by their tilt, in contrast to the convex case~\cite{FS97}. 

The key towards obtaining a decoupling inequality for non-convex $\mu \equiv \mu_{\beta}$, which is a basic observation in many renormalization-group type arguments, and which was used in \cite{DC12} in the context of Ginzburg-Landau models, is that one typically gains convexity by integration. Specifically, the restriction $\tilde{\mu}_{\beta}$ of $\mu_{\beta}$ to, say, the even sublattice ($\mathbb{Z}^d$ is bipartite), can be described by a \textit{convex} Hamiltonian, if $g$ is sufficiently small (as parametrized by $\beta>0$, the inverse temperature). While still retaining a gradient nature in the even spins, the effective Hamiltonian describing the even field is not given by a two-body potential anymore, as integrating an odd variable (note that these are conditionally independent given the even field) ``mixes'' the interaction between all neighboring even spins. However, our methods are sufficiently robust to handle the resulting many-body potentials, and we obtain a decoupling inequality for $\widetilde{\mu}_{\beta}$ (at sufficiently small $\beta$).

\bigskip

This paper is organized as follows. In Section \ref{S:1}, we introduce our setup and collect some useful tools regarding Gradient Gibbs measures, including in particular, the Helffer-Sj\"ostrand representation, see Lemma \ref{L:HS}. Sections \ref{S:sprinkle} and \ref{S:error} are devoted to the implementation of our sprinkling technique, and the estimate of the arising error term, respectively. The central results are Proposition \ref{L:monot} and Lemma \ref{L:error}. Together, the two readily imply our first version of the decoupling inequality, Theorem \ref{P:dec_ineq}. Section~\ref{S:apps} discusses some applications to the geometry of the level sets of $\mu$. First, we set up a renormalization scheme, much in the spirit of \cite{Sz12a}, which leads to stretched exponential decay of the connectivity function at large heights, see Theorem \ref{T:subcrit}. This result is mainly included to later allow for a slightly different error term in our decoupling inequality, see Theorem \ref{T:dec_ineq2}, thus making it amenable to the setup of the recent works \cite{DRS14.2}, \cite{DRS14.3} \cite{PRS15}, and \cite{Sa14}, and yielding a number of results in the ``strongly supercritical regime,'' i.e. when $h$ is sufficiently small, see Theorems \ref{T:chem_dist} and \ref{T:RW_perc}. Finally, Section \ref{S:nonconvex} deals with extensions to the non-convex case. 

A final note regarding our convention with constants: $c,c', c'',\dots$ denote positive constants, possibly depending on $d$, the dimension of the space, which can change from place to place. Numbered constants are defined where they first appear and stay fixed throughout.

\bigskip

\noindent \textbf{Acknowledgments.} The author thanks Marek Biskup for useful discussions, and
the FIM at ETH Zurich and Alain-Sol Sznitman for their hospitality during the summer of 2016, during which part of this research was completed.

\section{Preliminaries}\label{S:1}

We now introduce the measures of interest, along with some notation. We formulate a somewhat general set of conditions for gradient Hamiltonians which guarantee a Helffer-Sj\"ostrand random walk representation in finite volume, akin to the one developed e.g. in \cite{DGI00}, \cite{GOS01}, see also the monograph \cite{Fu05}, for nearest-neighbor two-body potentials. In particular, the specifics of the Hamiltonian are completely irrelevant for the construction so long as it has the required continuous symmetry with respect to shifts (i.e. a gradient nature), and satisfies a suitable convexity assumption.
Our setup allows for many-body interactions satisfying a suitable random walk condition, which will prove useful when treating non-convex perturbations in Section \ref{S:nonconvex}, see also Remark \ref{R:gensetup} below.

We consider the lattice $\mathbb{Z}^d$, and assume tacitly throughout that $d \geq 3$ . We write $\Lambda \subset \subset \mathbb{Z}^d$ to denote a finite subset. We assume that there exists $\Gamma \subset \subset \mathbb{Z}^d$ with the property that $0 \notin \Gamma$, $ \Gamma =  -\Gamma \, (=\{ -x; x \in \Gamma \})$, and require that all $x \in \mathbb{Z}^d$ can be written as $x=\sum_{1\leq i\leq n}y_i$, for some $n \geq 1$ and suitable $y_i \in \Gamma$. The set $\mathbb{Z}^d$ is endowed with the (oriented) edge set $\mathscr{E}=\{ (x,x+y); x \in \mathbb{Z}^d, y \in \Gamma \}$, which satisfies $\mathscr{E}=-\mathscr{E}$. Given an edge $e\in \mathscr{E}$, we write $x(e)$ and $y(e)$ for its endpoints, such that $e=(x(e),y(e))$. We often use $x\sim y$ instead of $(x,y)\in \mathscr{E}$. We write $\mathcal{G}=(\mathbb{Z}^d,  \mathscr{E})$, endowed with its graph distance $|\cdot|$, and balls $B(x,L)= \{ y \in \mathcal{G}; |y-x|\leq L \}$, for $x\in \mathbb{Z}^d$, $L \geq 1$. 
We also use $\vert\cdot \vert_p$ to denote the usual $\ell^p$-distance on $\mathbb{Z}^d$. For a set $\Lambda \subset \mathbb{Z}^d$, we define its outer vertex boundary $\partial \Lambda = \{ z \in \mathbb{Z}^d \setminus \Lambda; \exists x \in K \text{s.t. } (x,z)\in \mathscr{E}  \}$, let $\overline{\Lambda}= \Lambda \cup \partial \Lambda$ and $\partial_{\text{i}}\Lambda = \partial \Lambda^c$, where $\Lambda^c= \mathbb{Z}^d \setminus \Lambda$. Given $\Lambda\subset \subset \mathbb{Z}^d$, we write $\mathcal{G}_{\Lambda}= ( \Lambda \cup \partial \Lambda, \mathscr{E}_{\Lambda} )$ for the induced subgraph where $\mathscr{E}_{\Lambda}\subset \mathscr{E}$ consists of all edges having at least one endpoint in $\Lambda$. 

Let $\Omega_{\Lambda}= \Omega_{\Lambda}$, for $\Lambda \subset \subset \mathbb{Z}^d$, and
$\Omega = \mathbb{R}^{\mathbb{Z}^d}$, endowed with their canonical $\sigma$-algebras $\mathcal{F}_{\Lambda}$, resp. $\mathcal{F}$, and canonical coordinates $\varphi_x: \Omega \to \mathbb{R}$, $\omega \mapsto \omega (x)$, for $x\in \mathbb{Z}^d$. We write
\begin{equation}\label{eq:gradphi}
\nabla \varphi(e)=\varphi_y - \varphi_x, \text{ if } e=(x,y), \, e \in \mathscr{E}
\end{equation}
for the corresponding discrete gradients. We will also consider $\widehat{\Omega}_{\Lambda} =\{0,1 \}^{\Lambda}$, $ \widehat{\Omega} = \widehat{\Omega}_{\mathbb{Z}^d}$, endowed with canonical $\sigma$-algebras $\widehat{\mathcal{F}}_{\Lambda}$, $\widehat{\mathcal{F}}$, and canonical coordinates $Y_x$, $x \in \mathbb{Z}^d$.

We consider a family $\{ V_X\}_{X \in \mathcal{B}}$ of potentials indexed by unit balls in $\mathcal{G}$, i.e. $\mathcal{B}= \{ B(x,1); x \in \mathbb{Z}^d\}$, and ${V}_X: \mathbb{R}^{\mathscr{E}(X)} \to \mathbb{R}_+$, where $\mathscr{E}(X)= \{e \in \mathscr{E}; \, x(e)=x  \}$, if $X=B(x,1)$, are the edges emanating from $x$. For each $\Lambda \subset \subset \mathbb{Z}^d$ a Hamiltonian $H_{{\Lambda}}(\varphi)$ is specified in terms of the potentials $\{ V_X\}_{X \in \mathcal{B}}$ as
\begin{equation}\label{eq:H_pots}
H_{{\Lambda}}(\varphi)= \sum_{X \in \mathcal{B}: X\cap \Lambda \neq \emptyset} V_X((\nabla \varphi(e))_{e\in \mathscr{E}(X)}).
\end{equation}
Our interactions are subject to the following conditions. For $a \in \mathbb{Z}^d$, let $X+a=\{x+a; x \in X \}$ and $\tau_a(V_X): \mathbb{R}^{\mathscr{E}(a+X)}\to \mathbb{R}_+$ be defined as $\tau_a(V_X)((\nabla \varphi(e))_{ e\in \mathscr{E}(a+X)})= V_X((\nabla \varphi(e+a))_{ e\in \mathscr{E}(X)})$, with $e+a=(x+a,y+a)$ if $e=(x,y)$. Denoting by $C^{2,\alpha}$ the space of $C^2$-functions with $\alpha$-H\"older second derivatives, and abbreviating $\partial_x = \partial / \partial \varphi_x$, $\partial^2_{x,y} = \partial^2 / \partial \varphi_x \partial \varphi_y$  we require that there exist $c_0 \in [1,\infty)$ and $\alpha > 0$ such that, for all $X \in \mathcal{B}$,
\begin{align}
&\text{smoothness: } V_X \in C^{2,\alpha}(\mathbb{R}^{\mathscr{E}(X)}, \mathbb{R}_+), \label{eq:smooth}\\
&\text{symmetry: }V_X(\eta)= V_X(-\eta),\, \eta\in \mathbb{R}^{\mathscr{E}(X)},  \label{eq:sym} \\
&\text{translation invariance: } V_{X+a}= \tau_a(V_X), \text{ }a \in \mathbb{Z}^d,  \label{eq:trans_inv}\\ 
&\begin{array}{l}
\text{uniform convexity: } c_0^{-1} \leq - \displaystyle \sum_{X \in \mathcal{B}: X \supset \{x,y\}}  \partial^2_{x,y}V_X((\nabla \varphi(e))_{e\in \mathscr{E}(X)}) \leq c_0, \, (x,y) \in \mathscr{E}, \\
\text{and } \partial^2_{x,y} V_X((\nabla \varphi(e))_{e\in \mathscr{E}(X)}) =0, \text{ for all }x \neq y \text{ with } (x,y) \notin \mathscr{E}.
\end{array}\label{eq:elliptic}
\end{align}

\begin{remark} The potential $V_X$ allows for a joint interaction between the gradients of $\varphi$ along all edges joining $x$ to its neighbors in $\mathcal{G}$. Choosing $\Gamma = \{x; \vert x\vert_1 =1 \}$ (with $|\cdot|_1$ the usual $\ell^1$-norm on $\mathbb{Z}^d$), and defining, for $X=B(x,1)$,
\begin{equation}\label{eq:pot2body}
V_X((\nabla \varphi(e))_{e \in \mathscr{E}(X)})= \frac12 \sum_{e: x(e)=x} V(\nabla\varphi(e)),
\end{equation} for symmetric $V \in C^{2,\alpha}(\mathbb{R}, \mathbb{R}_+)$ with $c_0^{-1}\leq V'' \leq c_0$, yields a potential satisfying \eqref{eq:smooth}-\eqref{eq:elliptic}, a special case the reader may wish to focus on at first reading. In view of \eqref{eq:H_pots}, \eqref{eq:pot2body} yields the usual two-body Hamiltonian considered in the introduction, cf. \eqref{eq:intro1} and \eqref{eq:mu_Lambda} below. \hfill $\square$
\end{remark}

From \eqref{eq:elliptic}, \eqref{eq:H_pots}, one sees that for $\Lambda \subset \subset \mathbb{Z}^d$, $x \in \Lambda$,
\begin{equation}\label{eq:elliptic000}
c_0^{-1} \leq-\partial^2_{x,y}H_{\Lambda}(\varphi) \leq c_0, \text{ if $(x, y) \in \mathscr{E}_{\Lambda}$,  and $\partial^2_{x,y}H_{\Lambda}(\varphi)=0$,  if $(x, y) \notin \mathscr{E}$, $x\neq y$}. 
\end{equation}
Moreover, the gradient nature of $H_{\Lambda}$ implies that, for any $x$, $H_{\Lambda}(\varphi)= H_{\Lambda}((\varphi_y - \varphi_x)_{y})$, and thus, differentiating with respect to $\varphi_x$, that $\sum_{x} \frac{\partial H_{\Lambda}(\varphi)}{\partial \varphi_x}=0$, hence, in view of \eqref{eq:elliptic000}, for all $x \in \Lambda$,
\begin{equation}\label{eq:RW_cond}
\partial^2_{x,x} H_{\Lambda}(\varphi)= - \sum_{y: y \sim x} \partial^2_{x,y} H_{\Lambda}(\varphi). 
\end{equation}
In particular, \eqref{eq:RW_cond} yields that, for any $f: \Omega \to \mathbb{R}$ with $f(x)= 0$, for $x\notin \Lambda$,
\begin{equation}\label{eq:unifcon0001}
\langle f, \partial^2 H_{\Lambda}(\varphi) f\rangle_{\ell^2}= \frac{1}{2} \sum_{x \neq y} (-\partial^2_{x,y}H_{\Lambda}(\varphi))(f(y)-f(x))^2.
\end{equation}
We now introduce the measures of interest. Given $\xi \in \Omega$, we define $H_{\Lambda}^{\xi}: \Omega_{\Lambda}\to \mathbb{R}$ as 
\begin{equation}\label{eq:H_xi}
H_{\Lambda}^{\xi}(\varphi_{\Lambda})= H_{\Lambda}(\varphi)_{|_{\varphi=\xi \text{ on }\Lambda^c}}.
\end{equation}
We will often omit the subscript $\Lambda$ in $\varphi_{\Lambda}$, minding that $H_{\Lambda}^{\xi}$ is viewed as a function on $\Omega_{\Lambda}$.
Associated to $H_{\Lambda}^\xi$ is a (Gibbs) probability measure $\mu_{\Lambda, \beta}^{\xi}$ on $(\Omega_{\Lambda},\mathcal{F}_{\Lambda})$ at inverse temperature $\beta > 0$ and with boundary condition $\xi$, defined as 
\begin{equation}\label{eq:mu_Lambda}
\mu_{\Lambda, \beta}^{\xi}(d\varphi)= \frac{1}{ {Z}_{\Lambda, \beta}^{\xi} } e^{ - \beta H_{\Lambda}^{\xi}(\varphi)} \prod_{x \in \Lambda} d\varphi_x 
\end{equation}
where $d\varphi_x$ denotes Lebesgue measure on $\mathbb{R}$ and ${Z}_{\Lambda, \beta}^{\xi}$ is a suitable normalizing constant (that the relevant integral converges follows from uniform convexity of $H_{\Lambda}^{\xi}(\varphi)$, see \eqref{eq:unifcon0001} and \eqref{eq:elliptic000}, together with Taylor's formula, implies that 
\begin{equation}\label{eq:quadraticinfty}
\liminf_{\vert\varphi\vert_2\to \infty}|H_{\Lambda}^{\xi}(\varphi)|/ \vert\varphi \vert_2^2 > 0.
\end{equation}
 With the exception of Section \ref{S:nonconvex}, where $\beta > 0$ will be a perturbative parameter, we will set $\beta=1$ and omit it from the notation. We write $\mathbb{E}_{\mu_{\Lambda, \beta}^{\xi}}$ for expectation with respect to $\mu_{\Lambda, \beta}^{\xi}$ and denote by $\langle \cdot, \cdot \rangle_{\Lambda,\beta}^{\xi}$ the scalar product in $L^2(\mu_{\Lambda, \beta}^{\xi})$. Occasionally, we will tacitly identify $\mu_{\Lambda, \beta}^{\xi}$ with the corresponding measure $\nu_{\Lambda, \beta}^{\xi}$ on $(\Omega,\mathcal{F})$, defined such that $\mu_{\Lambda, \beta}^{\xi}(A)= \nu_{\Lambda, \beta}^{\xi}(A\times \xi_{\Lambda^c})$, for $A \in \mathcal{F}_{\Lambda}$.

We recall several useful properties of the measures $\mu_{\Lambda, \beta}^{\xi}$, which will be used throughout. From the Gibbsian nature of $\mu_{\Lambda, \beta}^{\xi}$, cf. \eqref{eq:H_pots} and \eqref{eq:mu_Lambda}, one infers that, for $\Lambda \subset \subset \mathbb{Z}^d$ and  $S' \subset \Lambda$, recalling that $\mathcal{F}_{S'}= \sigma(\varphi_x, x\in S')$, 
\begin{equation}\label{eq:CONDEXP}
\mathbb{E}_{\mu_{{\Lambda}, \beta}^{\xi}} (\cdot | \mathcal{F}_{S'} ) = \mathbb{E}_{\mu_{\Lambda \setminus S', \beta}^{\xi \vee \varphi}}  (\cdot), \quad \mu_{{\Lambda}, \beta}^{\xi}\text{-a.s.},
\end{equation}
where 
\begin{equation}\label{eq:bc_concat}
\xi \vee \varphi = \begin{cases}
\xi_x, & x \notin \Lambda\\
\varphi_x, & x \in S'.
\end{cases}
\end{equation}
The right-hand side of \eqref{eq:CONDEXP} defines a regular conditional probability for the measure $\mu_{\Lambda, \beta}^{\xi}$ conditioned on $\varphi_x$, $x \in S'$, denoted by $\mu_{\Lambda, \beta}^{\xi} (\cdot | \varphi_x, x\in S' )$. Moreover, $\mu_{\Lambda, \beta}^{\xi}$ satisfies the following domain Markov property. Namely, with $\partial_{\text{i}}^{(2)} S' = \{ x \in S'; d_{\mathcal{G}}(x,S'^c) \leq 2 \}$, using \eqref{eq:H_pots} and the fact that $V_{X} \in \mathcal{F}_X$, one has that 
\begin{equation}\label{eq:DOMAINMARKOV}
\begin{split}
&\text{the fields $(\varphi_x)_{x\in \Lambda\setminus S'}$ and $(\varphi_x)_{x\in S'\setminus \partial_{\text{i}}^{(2)} S'}$ are (conditionally) }\\
&\text{independent under $\mu_{{\Lambda}, \beta}^{\xi} (\cdot | \varphi_x, x\in   \partial_{\text{i}}^{(2)} S' )$}.
\end{split}
\end{equation}
We now review the Helffer-Sj\"ostrand representation for the covariances under the measure $\mu_{\Lambda, \beta}^{\xi}$. One naturally associates to $\mu_{\Lambda, \beta}^{\xi}(d\varphi)$ the second order differential operator
\begin{equation}\label{eq:hs1}
L_{\Lambda}^{\xi}\stackrel{\text{def.}}{=}e^{  \beta H_{\Lambda}^{\xi}(\varphi)}\sum_{x \in \Lambda}\frac{\partial}{\partial \varphi_x}\Big[ e^{ - \beta H_{\Lambda}^{\xi}(\varphi)} \frac{\partial}{\partial \varphi_x} \Big] \ 
\end{equation}
with domain
\begin{equation}\label{eq:dom_L}
\text{Dom}(L_{\Lambda}^{\xi})= \{ f \in C^2(\Omega_{\Lambda}, \mathbb{R}); \sup_{\varphi\in \Omega_{\Lambda}} |\partial f(x,\varphi)|e^{-\varepsilon \sum_x |\varphi_x|}< \infty, \text{ for some }\varepsilon > 0\},
\end{equation}
 with the notation $\partial f(x,\varphi) = \partial f(\varphi)/ \partial \varphi_x$, for $x \in \Lambda$. We also set the $\partial f(x,\varphi)= 0$, for $f \in C^1(\Omega_{\Lambda}, \mathbb{R})$, $x\notin \Lambda$, which will be used throughout the paper. The point of the definition \eqref{eq:hs1} is that $L_{\Lambda}^{\xi}$ is symmetric with respect to $\mu_{\Lambda, \beta}^{\xi}$, i.e., 
\begin{equation}\label{eq:Lsym}
\langle f, -L_{\Lambda}^{\xi}g\rangle_{\Lambda,\beta}^{\xi}= \sum_{x \in \Lambda} \langle \partial f(x,\varphi), \partial g(x, \varphi)\rangle_{\Lambda,\beta}^{\xi} = \langle -L_{\Lambda}^{\xi} f, g\rangle_{\Lambda,\beta}^{\xi}, \text{ for } f,g \in \text{Dom}(L_{\Lambda}^{\xi}), 
\end{equation}
which follows from integration by parts and \eqref{eq:dom_L}, \eqref{eq:quadraticinfty}. Rewriting $L_{\Lambda}^{\xi}=  \sum_{x \in \Lambda} [\frac{\partial^2}{\partial \varphi_x^2} - \frac{\partial H_{\Lambda}^{\xi}(\varphi)}{\partial \varphi_x} \frac{\partial}{\partial \varphi_x}]$, cf. \eqref{eq:hs1}, one may view $L_{\Lambda}^{\xi}$ as the generator of the finite-dimensional diffusion process described by the stochastic differential equation
\begin{equation}\label{eq:Langevin}
\begin{cases}
d\Phi_t(x) = - \partial H_{\Lambda}^{\xi}(\Phi_t,x) dt + \sqrt{2} dB_t(x), & x \in \Lambda\\
\Phi_t(x)= \xi_x, & x \notin \Lambda,
\end{cases}
\end{equation}
where $ (B_t(y))_{t \geq 0}$, $y \in \Lambda$, are independent Brownian motion. Due to assumption \eqref{eq:elliptic}, the drift coefficients in \eqref{eq:Langevin} satisfy the growth assumptions of classical SDE theory (see for instance \cite{KS91}, Ch.5, Thm. 2.9), which yields that \eqref{eq:Langevin} has a (unique) strong solution, for given initial condition. We denote by $P^{\Lambda, \xi}_\varphi$ the canonical law on $C^0(\mathbb{R}_+, \Omega \cap \{ \varphi_{\vert_{\Lambda^c}}= \xi  \} )$ of the solution to \eqref{eq:Langevin} with starting point $\Phi_0 = \varphi$, and by ${E}^{\Lambda, \xi}_\varphi$ the corresponding expectation.

Let $F,G \in \text{Dom}(L_{\Lambda}^{\xi}) (\subset L^2(\mu_{\Lambda, \beta}^{\xi}))$. We wish to compute $\textnormal{Cov}_{\mu_{\Lambda, \beta}^{\xi}}(F,G)= \mathbb{E}_{\mu_{\Lambda, \beta}^{\xi}}[F\widetilde{G}]$, with $\widetilde{G}= G- \mathbb{E}_{\mu_{\Lambda, \beta}^{\xi}}[{G}] $. Proceeding as in the proof of Lemma 2.1 in \cite{DGI00}, but under the general assumption \eqref{eq:smooth}, \eqref{eq:elliptic} above replacing (2.2), (2.3) in \cite{DGI00} (the precise form of the Hamiltonian $H_{\Lambda}$ is irrelevant in the proof), one deduces that the equation 
\begin{equation}\label{eq:poisson1}
-L_{\Lambda}^{\xi}U = \widetilde{G}
\end{equation}
has a unique classical solution $U\in C^{3,\alpha} \cap  \overline{\text{Dom}}(L_{\Lambda}^{\xi})$ satisfying $ \mathbb{E}_{\mu_{\Lambda, \beta}^{\xi}}[U]=0$, where $\overline{\text{Dom}}(L_{\Lambda}^{\xi})$ is the domain of the closed self-adjoint extension of $-L_{\Lambda}^{\xi}$ to $ L^2(\mu_{\Lambda, \beta}^{\xi})$. Thus, 
\begin{equation}\label{eq:HScov0}
\textnormal{Cov}_{\mu_{\Lambda, \beta}^{\xi}}(F,G)= \mathbb{E}_{\mu_{\Lambda, \beta}^{\xi}}[F ( -L_{\Lambda}^{\xi}U)]= \sum_{x \in \Lambda} \mathbb{E}_{\mu_{\Lambda, \beta}^{\xi}}[\partial F(x,\varphi) \partial U(x, \varphi)],
\end{equation}
where the last step follows from \eqref{eq:hs1} and integration by parts. One seeks a probabilistic representation for $ \partial U(x, \varphi)$. Note that, defining 
\begin{equation}\label{eq:thea}
a_{x,y}^{\xi}(\varphi) = -[\partial_x \partial _y H_{\Lambda}(\varphi) ]_{|_{\varphi=\xi \text{ on }\Lambda^c}}, \text{ for }(x,y) \in \mathscr{E}_{\Lambda},
\end{equation}
and with the convention $ \partial U(x, \varphi) =0$, $x \notin \Lambda$, one has, for $x \in \Lambda$,
\begin{equation}\label{eq:comm}
\begin{split}
&\partial_x (-L_{\Lambda}^{\xi}) U = - \partial_x \sum_{y \in \Lambda} [\partial^2_y U- (\partial _y H_{\Lambda}^{\xi})(\partial_yU)] \\
&= -L_{\Lambda}^{\xi}(\partial_x U) + \sum_{y \in \Lambda} (\partial_x \partial _y H_{\Lambda}^{\xi})(\partial_yU) \stackrel{\eqref{eq:RW_cond}}{=}  -L_{\Lambda}^{\xi}(\partial_x U) - \sum_{y:(x,y)\in \mathscr{E}_{\Lambda}} a_{x,y}^{\xi}(\varphi) (\partial_yU - \partial_xU) .
\end{split}
\end{equation}
Thus, introducing the operator $Q_{\Lambda}^{\xi, \varphi}$, acting on functions $f:\Lambda \cup \partial\Lambda \to \mathbb{R}$ as
\begin{equation}\label{eq:defQ}
(Q_{\Lambda}^{\xi, \varphi}f)(x)=    \sum_{y: (x,y)\in \mathscr{E}_{\Lambda}}  a_{x,y}^{\xi}(\varphi)(f(y)-f(x)), 
\end{equation}
one obtains from \eqref{eq:poisson1} and \eqref{eq:comm} that $u(x,\varphi) =  \partial U(x, \varphi)$ solves the boundary value problem  
\begin{equation}\label{eq:BVP}
\begin{cases}
(-\mathscr{L}_{\Lambda}^{\xi})u(x, \varphi)= \partial\widetilde{G}(x,\varphi), &  x \in  \Lambda, \varphi \in\Omega_{\Lambda},\\
u(x,\varphi) =0, &  x \in  \Lambda^c, \varphi \in\Omega_{\Lambda},
\end{cases} 
\end{equation}
where we have defined
\begin{equation}\label{eq:L}
\mathscr{L}_{\Lambda}^{\xi} f (x, \varphi) = ({L}_{\Lambda}^{\xi}f(x, \cdot))(\varphi) + (Q_{\Lambda}^{\xi, \varphi}f(\cdot, \varphi))(x), \quad x \in  \Lambda, \varphi \in\Omega_{\Lambda}
\end{equation}
with domain
\begin{equation}\label{eq:L_dom}
\text{Dom}(\mathscr{L}_{\Lambda}^{\xi} )= \{ f: \mathbb{Z}^d \times \Omega_{\Lambda} \to \mathbb{R}; \, f(x, \cdot) \in \text{Dom}({L}_{\Lambda}^{\xi}), x \in \Lambda,\, f(y,\cdot)= 0 , y \in \Lambda^c\}.
\end{equation}
The operator $Q_{\Lambda}^{\xi, \varphi}$ is the generator of a pure jump process on $\mathcal{G}_{\Lambda}$, since $c_0^{-1} \leq a_{x,y}^{\xi}(\varphi)= a_{x,y}^{\xi}(\varphi) \leq c_0$ for all $(x,y)\in \mathscr{E}_{\Lambda}$, as follows from \eqref{eq:defQ}, \eqref{eq:thea} and \eqref{eq:elliptic000}. Hence, using the symmetry of $a_{x,y}^{\xi}(\varphi)$, one deduces from, \eqref{eq:L}, \eqref{eq:Lsym} and \eqref{eq:defQ} that for all $f, g \in \text{Dom}(\mathscr{L}_{\Lambda}^{\xi} )$,
\begin{equation}\label{eq:Dirichletform}
\begin{split}
\mathcal{E}(f,g)& \stackrel{\text{def.}}{=}\sum_{x \in  \Lambda} \langle f(x,\cdot), ( - \mathscr{L}_{\Lambda}^{\xi} g) (x, \cdot) \rangle_{\Lambda,\beta}^{\xi} \\
&= \sum_{x \in  \Lambda} \langle \partial f(x,\varphi), \partial g(x, \varphi)\rangle_{\Lambda,\beta}^{\xi} + \frac12 \sum_{ e\in \mathscr{E}_{\Lambda}}a_{x(e),y(e)}^{\xi}(\varphi)\nabla f(e,\varphi) \nabla g(e,\varphi)
\end{split}
\end{equation}
where $\nabla f(e,\varphi) = f(y, \varphi)-f(x,\varphi)$ if $e =(x,y)$, for $f \in \text{Dom}(\mathscr{L}_{\Lambda}^{\xi} )$, induces a Dirichlet form on the Hilbert space $L^2(\kappa_{\Lambda} \otimes \mu_{\Lambda, \beta}^{\xi})$, with $\kappa_{\Lambda}$ the counting measure on $\Lambda$. Thus, see for instance \cite{Fu11}, the closure of $\mathscr{L}_{\Lambda}^{\xi}$ in $L^2(\kappa_{\Lambda} \otimes \mu_{\Lambda, \beta}^{\xi})$ is the generator of a symmetric Markov process $(X_t, \Phi_t)_{ t\geq0}$ on $\overline{\Lambda} \times \Omega_{\Lambda}^\xi$, where $ \Omega_{\Lambda}^\xi = \Omega \cap \{ \varphi_{\vert_{\Lambda^c}}= \xi  \}$. We denote by $\mathbf{P}_{x,\varphi}^{\mathcal{G}_{\Lambda},\xi}$ the canonical law of this process with starting point $(X_0, \Phi_0) = (x, \varphi) \in  {\Lambda} \times \Omega_{\Lambda}^\xi$ on $D(\mathbb{R}_+, \overline{\Lambda})\times C^0(\mathbb{R}_+,  \Omega_{\Lambda}^\xi )$, endowed with its canonical $\sigma$-algebra, where $D(\mathbb{R}_+, \overline{\Lambda})$ denotes the space of right-continuous trajectories on $\overline{\Lambda}$. Because of the specific form of the generator $\mathscr{L}_{\Lambda}^{\xi} $, cf. \eqref{eq:L}, one can construct the process $(X_t, \Phi_t)_{t \geq 0}$ by first generating the ``environment'' $(\Phi_t)_{t}$, solution of the Langevin equation \eqref{eq:Langevin}, and then constructing the jump process $(X_t)_t$ with time-dependent transition rates 
\begin{equation}\label{eq:RATES}
a_{\Lambda}(\Phi)= \{ a_{x,y}^{\xi}(\Phi_t);  t\geq 0, \, (x,y)\in \mathscr{E}_{\Lambda}\},
\end{equation}
with $a_{x,y}^{\xi}(\cdot)$ as defined in \eqref{eq:thea}. We use
\begin{equation}\label{eq:stoppingtimes}
H_K =  \inf\{ t > 0 ; X_t \in K\}, \, \tau_K =H_{K^c}, \text{ for }K \subset \mathbb{Z}^d,
\end{equation}
to denote the entrance time in $K$, resp. exit time from $K$. Having identified $\mathscr{L}_{\Lambda}^{\xi}$ as a generator, returning to the boundary value problem \eqref{eq:BVP}, we deduce that $u$ admits the following probabilistic representation. Recall the convention $\partial f(y, \varphi)=0$ for $y \notin \Lambda$ and 
$f:\mathbb{Z}^d \times \Omega_{\Lambda}\to \mathbb{R}$.

\begin{lem}[Helffer-Sj\"ostrand representation formula] $\quad$ \label{L:HS}

\medskip
\noindent The solution $u: \mathbb{Z}^d \times \Omega_{\Lambda}\to \mathbb{R}$ of \eqref{eq:BVP} can be expressed as
\begin{equation}\label{eq:HS1}
u(x,\varphi)= \mathbf{E}_{x,\varphi}^{\mathcal{G}_{\Lambda},\xi}\Big[ \int_0^{\tau_{\Lambda}} \partial\widetilde{G}(X_t,\Phi_t) dt \Big], \quad x \in \overline{\Lambda}, \varphi \in \Omega_{\Lambda}.
\end{equation}
Moreover, for any $F, G \in \textnormal{Dom}(L_V^\xi)$, one has
\begin{equation}\label{eq:HS2}
\textnormal{Cov}_{\mu_{\Lambda, \beta}^{\xi}}(F,G)= \sum_{x \in \Lambda} \int_0^\infty dt E_{x, \mu_{\Lambda, \beta}^{\xi}}[\partial F(x,\Phi_0) \partial G(X_t, \Phi_t) 1\{ t < \tau_{\Lambda}\}  ],
\end{equation}
where $P_{x, \mu_{\Lambda, \beta}^{\xi}} [\cdot]= \int \mu_{\Lambda,\beta}^{\xi}(d\varphi) \mathbf{P}_{x,\varphi}^{\mathcal{G}_{\Lambda},\xi}[\cdot]$ is the law of $(X_t, \Phi_t)_t$ with initial distribution $(X_0, \Phi_0) \sim \delta_x \otimes  \mu_{\Lambda,\beta}^{\xi}$.
\end{lem}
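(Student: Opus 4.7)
The plan is to split the statement in two and tackle them in order: first \eqref{eq:HS1} via Dynkin's formula applied to $u$ along trajectories of the joint process $(X_t, \Phi_t)_{t \geq 0}$, and then \eqref{eq:HS2} by inserting \eqref{eq:HS1} into the already-established identity \eqref{eq:HScov0}.

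For \eqref{eq:HS1}, I would proceed as follows. The regularity $U \in C^{3,\alpha}$ obtained for the Poisson problem \eqref{eq:poisson1} (cf.\ the discussion preceding \eqref{eq:HScov0}, which adapts Lemma~2.1 of~\cite{DGI00} to our general setup \eqref{eq:smooth}--\eqref{eq:elliptic}) gives that $u(x,\cdot) = \partial U(x,\cdot)$ is $C^{2,\alpha}$ in $\varphi$ with polynomial-in-$|\varphi|_2$ control on its derivatives; combined with the growth bound \eqref{eq:quadraticinfty} on $H_\Lambda^\xi$, this ensures $u(\cdot,\varphi) \in \textnormal{Dom}(\mathscr{L}_\Lambda^\xi)$ with sufficient integrability against $\mu_{\Lambda,\beta}^\xi$ to apply Dynkin's formula to the Markov process $(X_t,\Phi_t)$ stopped at $t \wedge \tau_\Lambda$, yielding
\begin{equation*}
\mathbf{E}_{x,\varphi}^{\mathcal{G}_\Lambda,\xi}\big[u(X_{t \wedge \tau_\Lambda}, \Phi_{t \wedge \tau_\Lambda})\big] - u(x,\varphi) = \mathbf{E}_{x,\varphi}^{\mathcal{G}_\Lambda,\xi}\Big[\int_0^{t \wedge \tau_\Lambda} \mathscr{L}_\Lambda^\xi u(X_s, \Phi_s)\, ds \Big].
\end{equation*}
Invoking \eqref{eq:BVP}, namely $\mathscr{L}_\Lambda^\xi u = -\partial \widetilde{G}$ on $\Lambda$ and $u \equiv 0$ on $\Lambda^c$ (so the $u(X_{\tau_\Lambda},\cdot)$ contribution vanishes), and passing $t \to \infty$ gives \eqref{eq:HS1}. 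The limit is justified because $\tau_\Lambda$ has exponential moments uniformly in $(\varphi,\xi)$: the jump rates \eqref{eq:RATES} are bounded above and below by the ellipticity constants from \eqref{eq:elliptic000}, so the embedded discrete skeleton of $X$ exits the finite set $\Lambda$ after a geometric number of steps, and dominated convergence then handles both sides.

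For \eqref{eq:HS2}, I would just chain \eqref{eq:HScov0} with \eqref{eq:HS1}. Noting that $\partial \widetilde{G} = \partial G$ (the subtracted constant $\mathbb{E}_{\mu_{\Lambda,\beta}^\xi}[G]$ is $\varphi$-independent), substituting yields
\begin{equation*}
\textnormal{Cov}_{\mu_{\Lambda,\beta}^\xi}(F,G) = \sum_{x \in \Lambda} \int \mu_{\Lambda,\beta}^\xi(d\varphi) \, \partial F(x,\varphi) \, \mathbf{E}_{x,\varphi}^{\mathcal{G}_\Lambda,\xi}\Big[\int_0^{\tau_\Lambda} \partial G(X_t, \Phi_t)\, dt \Big],
\end{equation*}
and a Fubini exchange of the outer integrals with the time integral, together with the definition $P_{x,\mu_{\Lambda,\beta}^\xi}[\cdot] = \int \mu_{\Lambda,\beta}^\xi(d\varphi)\, \mathbf{P}_{x,\varphi}^{\mathcal{G}_\Lambda,\xi}[\cdot]$ and the indicator rewriting $\int_0^{\tau_\Lambda}(\cdot)\, dt = \int_0^\infty 1\{t < \tau_\Lambda\}(\cdot)\, dt$, produces \eqref{eq:HS2}.

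The main obstacle is the verification of the integrability hypotheses that make Dynkin's formula and the subsequent Fubini step rigorous, i.e.\ that $u$ lies in $\overline{\textnormal{Dom}}(\mathscr{L}_\Lambda^\xi)$ with derivatives integrable against $\kappa_\Lambda \otimes \mu_{\Lambda,\beta}^\xi$, and that the time-integrated expressions are in $L^1$. All of this is controlled by the Schauder-type estimates on the solution $U$ of \eqref{eq:poisson1}, which carry over under our hypotheses \eqref{eq:smooth}--\eqref{eq:elliptic} since the relevant structural inputs are the uniform ellipticity of $L_\Lambda^\xi$ and the Gaussian-type tails of $\mu_{\Lambda,\beta}^\xi$ secured by \eqref{eq:quadraticinfty}; once these are in place, the remaining steps are routine.
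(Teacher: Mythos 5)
Your argument is correct and follows essentially the same route as the paper: the identity \eqref{eq:HS1} is obtained there by optional stopping (i.e.\ exactly the Dynkin-martingale argument you spell out, following Prop.\ 2.2 of \cite{DGI00}), and \eqref{eq:HS2} is likewise deduced by combining \eqref{eq:HS1} with \eqref{eq:HScov0}, using $\partial\widetilde{G}=\partial G$ and Fubini's theorem. The integrability checks you flag are precisely the points the paper delegates to the adaptation of \cite{DGI00}, so there is no substantive difference.
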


\begin{proof}
The representation \eqref{eq:HS1} follows from an application of the optional stopping theorem, mimicking the proof of Prop. 2.2. in \cite{DGI00}, and \eqref{eq:HS2} is an immediate consequence of \eqref{eq:HS1}, \eqref{eq:HScov0}, recalling that $G-\widetilde{G}= \mathbb{E}_{\mu_{\Lambda, \beta}^{\xi}}[{G}]$, and applying Fubini's theorem.
\end{proof}
An immediate consequence of Lemma \ref{L:HS} is a comparison estimate for moments of $\varphi$ under $\mu_{\Lambda, \beta}^{\xi}$ with moments of the corresponding Gaussian free field on $\mathcal{G}$. Let $\mu_{\Lambda, \beta}^{\xi,\ast}$ be obtained from \eqref{eq:mu_Lambda} by setting $H_{\Lambda}(\varphi)= \frac{1}{2c_0}\sum_{e \in \mathscr{E}_{\Lambda}} (\nabla \varphi(e))^2$ in \eqref{eq:H_xi}, \eqref{eq:mu_Lambda}, with $c_0$ from \eqref{eq:elliptic}. 
\begin{lem}[Brascamp-Lieb inequality for exponential moments] \label{L:BL}$\quad$

\medskip
\noindent For all $\Lambda \subset \subset \mathbb{Z}^d$, $\nu \in \Omega_{\Lambda}$, setting $\hat{\varphi} =  \varphi - \mathbb{E}_{\mu_{\Lambda, \beta}^{\xi}}[\varphi]$, one has 
\begin{equation}\label{eq:BL}
\mathbb{E}_{\mu_{\Lambda, \beta}^{\xi}}\Big[e^{\langle \nu, \hat{\varphi}\rangle_{\ell^2(\Lambda)}}\Big] \leq e^{\frac12 \textnormal{var}_{\mu_{\Lambda, \beta}^{\xi,\ast}}(\langle \nu, \varphi \rangle_{\ell^2(\Lambda)})}
\end{equation}
\end{lem}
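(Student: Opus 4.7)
My plan is to derive \eqref{eq:BL} by a linear-interpolation argument that reduces the exponential bound to the variance form of the Brascamp-Lieb inequality of \cite{BL76}.

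The first ingredient is a uniform Hessian lower bound. Combining the pointwise convexity bound \eqref{eq:elliptic000} with the Dirichlet-form identity \eqref{eq:unifcon0001} gives, for every $f\colon\mathbb{Z}^d\to\mathbb{R}$ vanishing off $\Lambda$ and every $\varphi\in\Omega_\Lambda$,
$$
\langle f,\partial^2 H_\Lambda^\xi(\varphi)f\rangle_{\ell^2}\ \geq\ \frac{1}{2c_0}\sum_{(x,y)\in\mathscr{E}_\Lambda}\bigl(f(y)-f(x)\bigr)^2,
$$
and the right-hand side is (up to a fixed numerical factor) the quadratic form of the Hessian of the Gaussian Hamiltonian $\frac{1}{2c_0}\sum_{e}(\nabla\varphi(e))^2$ that defines $\mu_{\Lambda,\beta}^{\xi,\ast}$. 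Hence $H_\Lambda^\xi$ is operator-dominated below by its Gaussian surrogate, uniformly in $\varphi$.

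Next, I introduce $\psi(t)=\log\mathbb{E}_{\mu_{\Lambda,\beta}^\xi}\!\bigl[e^{t\langle\nu,\hat\varphi\rangle_{\ell^2(\Lambda)}}\bigr]$ for $t\in[0,1]$. Differentiation under the integral, justified by the super-Gaussian decay \eqref{eq:quadraticinfty}, gives $\psi(0)=0$, $\psi'(0)=0$ (since $\hat\varphi$ is centered), and
$$
\psi''(t)\ =\ \textnormal{var}_{\mu_t}\!\bigl(\langle\nu,\varphi\rangle_{\ell^2(\Lambda)}\bigr), \qquad d\mu_t\propto e^{t\langle\nu,\varphi\rangle}\,d\mu_{\Lambda,\beta}^\xi.
$$
Since the tilt is linear, $\mu_t$ is still a Gibbs measure and its Hamiltonian has the same Hessian as $H_\Lambda^\xi$, so the convexity lower bound above persists uniformly in $t$. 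Applying the variance Brascamp-Lieb inequality to $\mu_t$ therefore yields
$$
\psi''(t)\ \leq\ \textnormal{var}_{\mu_{\Lambda,\beta}^{\xi,\ast}}\!\bigl(\langle\nu,\varphi\rangle_{\ell^2(\Lambda)}\bigr)
$$
uniformly for $t\in[0,1]$, and integrating twice with the initial data $\psi(0)=\psi'(0)=0$ produces \eqref{eq:BL}.

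The variance step is the substantive core, and in keeping with the paper's Helffer-Sj\"ostrand machinery I would prove it by means of Lemma \ref{L:HS}: applied (after truncating by a bounded test function and then passing to the limit using \eqref{eq:quadraticinfty}) with $F=G=\langle\nu,\varphi\rangle$, the representation \eqref{eq:HS2} expresses $\textnormal{var}_{\mu_t}(\langle\nu,\varphi\rangle)$ as the Dirichlet form \eqref{eq:Dirichletform} of the solution of the vector-valued Poisson problem \eqref{eq:BVP} with forcing $\nu$; testing this equation against its own solution and invoking Cauchy-Schwarz together with the uniform Hessian lower bound gives the Gaussian-type variance control. I expect the chief obstacle to be handling a signed $\nu$: a naive edgewise comparison of the random-environment Green kernel with that of simple random walk would require one-sided $\nu$, and only the convexity/Dirichlet-form route afforded by Brascamp-Lieb accommodates general signs.
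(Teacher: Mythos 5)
Your argument is correct and is essentially the proof the paper has in mind: the paper omits all details and simply says that, given the Helffer--Sj\"ostrand identity \eqref{eq:HS2}, the proof is analogous to that of (4.13) in \cite{Fu05}, which is exactly your tilting argument ($\psi''(t)=\textnormal{var}_{\mu_t}$, same Hessian under a linear tilt, variance Brascamp--Lieb via the Dirichlet form \eqref{eq:Dirichletform} and Jensen/duality, then integrate twice). No substantive discrepancy to report.
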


\begin{proof}
On account of \eqref{eq:HS2}, the proof of \eqref{eq:BL} is analogous to that of $(4.13)$ in \cite{Fu05}. We omit the details. 
\end{proof}

\begin{remark}
The Gibbsian specification of $H_{\Lambda}$ in terms of $\{ V_X \}_X$ will enter when discussing the limit $\Lambda \nearrow \mathbb{Z}^d$ below. But Lemma \ref{L:HS} continues to hold if one replaces \eqref{eq:H_pots} and conditions \eqref{eq:smooth}-\eqref{eq:elliptic} by requiring that $H_{\Lambda}: \Omega \to \mathbb{R}$ be measurable with respect to $\mathcal{F}_{\Lambda \cup B}$, where $B \supset \partial \Lambda$ is a finite set, and satisfy the following requirements: $H_{\Lambda} \in C^{2,\alpha}$, for some $\alpha > 0$ (smoothness), $H_{\Lambda}(\varphi)= H_{\Lambda}(\varphi + C)$, for all $C \in \mathbb{R}$ (gradient structure), 
where, with hopefully obvious notation, $\varphi + C$ is obtained from $\varphi$ by shifting all coordinates by $C$
and $c_0^{-1} \leq-\partial^2_{x,y}H_{\Lambda}(\varphi) \leq c_0$, if $(x, y) \in \mathcal{G}_{\Lambda}$, $\partial^2_{x,y}H_{\Lambda}(\varphi) = 0$ if $ (x,y) \notin \mathscr{E}$, $x \neq y$  (convexity). \hfill $\square$
\end{remark}

We now introduce a suitable class of infinite-volume measures (or equilibrium states, in the jargon of statistical mechanics). We will consider weak limits of measures $\mu_{{\Lambda}, \beta}^{\xi}$ as $\mathcal{G}_{\Lambda} \nearrow \mathcal{G} = (\mathbb{Z}^d, \mathscr{E})$.
Henceforth, in writing $(\Lambda_n)_{n \geq 0} \nearrow \mathbb{Z}^d$, we always refer to an increasing sequence of finite subsets of $\mathbb{Z}^d$ whose union is $\mathbb{Z}^d$.
Let $\mathcal{M}^1(\Omega)$ be the set of probability measures on $\Omega$, and define $\mathscr{W}_{\beta}=\mathscr{W}_{\beta}(\{V_X\}_X)$ as
\begin{equation}\label{eq:W}
\begin{split}
\mathscr{W}_{\beta} = \{ 
&\mu  \in \mathcal{M}^1(\Omega); \, 
\text{there exists $ (\Lambda_n)_{n \geq 0} \nearrow \mathbb{Z}^d$,}\\
&\text{with $\mathcal{G}_{\Lambda_n}$ connected, for all $n \geq 0$ s.t. } \mu_{\Lambda_n, \beta}^{0} \stackrel{w}{\rightarrow} \mu\}.
\end{split}
\end{equation}
To see that $\mathscr{W}_{\beta} \neq \emptyset$, note that, denoting by $g^{*}(x,y)$ the Green function of (continuous-time, with exponential holding times of parameter $1$) simple random walk on $\mathcal{G}= (\mathbb{Z}^d, \mathscr{E})$, see \eqref{eq:GFstar} below, and $g^{*}_{{\Lambda}}(x,y)$ that of simple random walk on $\mathcal{G}_{\Lambda}$ killed upon hitting $\partial \Lambda$, one has for instance, using \eqref{eq:BL}, minding that $\mathbb{E}_{\mu_{\mathcal{G}_{\Lambda}, \beta}^{0}}[\varphi_x] = 0$, for all $x$, due to \eqref{eq:sym}, that
\begin{equation} \label{eq:tight}
\sup_{\Lambda} \sup_{x} \mathbb{E}_{\mu_{{\Lambda}, \beta}^{0}}[e^{\varphi_x}] \leq e^{\frac c2 g^{*}_{{\Lambda}}(x,x)} \leq e^{\frac c2 g^{*}(0,0)} < \infty
\end{equation}
(since $d\geq 3$, and $\mathcal{G}$ has bounded geometry), from which one easily deduces that the family $\{ \mu_{{\Lambda}, \beta}^{0}; \, \Lambda \subset \subset \mathbb{Z}^d \}$ (tacitly viewed as measures on $(\Omega, \mathcal{F})$) is tight.

We conclude this section with some elements of potential theory for simple random walk on $\mathcal{G}$. We denote by $P^*_x$ the law of continuous-time simple random walk on $\mathcal{G}$, started at $x$, write $(Z_t)_{t\geq 0}$ for the corresponding canonical process and 
\begin{equation}\label{eq:GFstar}
g^*(x,y)=E_x^\ast \Big[ \int_0^\infty dt 1\{ Z_t =y\}\Big], \, x,y \in \mathbb{Z}^d,
\end{equation} 
for its Green function. For $U \subset \subset \mathbb{Z}^d$, we define the equilibrium measure of $U$ as 
\begin{equation}\label{eq:equil_meas}
e^*_U(y)= P_y^*[\widetilde{H}_U = \infty]1_{y \in U},
\end{equation}
with $\widetilde{H}_U= \inf\{ t >0; Z_t \in U \text{ and $ \exists \, s\in (0,t)$ s.t. $Z_s \neq Z_0$} \}$ the hitting time of $U$, and the capacity of $U$
\begin{equation}\label{eq:cap_def0}
 \text{cap}^*(U) 
 =\sum_{y \in U} e^*_U(y),
\end{equation}
which satisfies the variational principle (see for instance \cite{Sz12b}, Prop.1.9)
\begin{equation}\label{eq:cap2}
 \text{cap}^\ast(U) = \frac{1}{\inf \{ E^*(\nu); \nu \text{ a prob. meas., } \text{supp}(\nu) \subset U  \}}
 \end{equation}
where
\begin{equation}\label{eq:en1}
\text{ where } E^*(\nu)= \sum_{x,y}\nu(x)g^*(x,y) \nu(y)= \langle \nu, G^* \nu \rangle_{\ell^2(\mathcal{G})},
\end{equation}
with $G^*\nu(x)= \sum_y G^*(x,y)\nu (y)$ the energy associated to the measure $\nu$. 
The unique minimizer in \eqref{eq:cap2} is $\nu  = \bar{e}_U^* = e_U^*/ \text{cap}^*(U)$, i.e.
\begin{equation}\label{eq:cap3}
\text{cap}^*(U) = E^*(\bar{e}_U^*)^{-1}.
\end{equation}

\section{Sprinkling} \label{S:sprinkle}

We proceed to the first and main result of this paper, which is a decoupling inequality for measures $\mu \in \mathscr{W}_{\beta}$, cf. \eqref{eq:W}, stated below in Theorem \ref{P:dec_ineq}. This result is proved over the next two sections. The main issue, as explained in the introduction, see also Remark \ref{R:idea_dec} below, consists of finding a suitable way to ``sprinkle the field,'' and is presented in this section, cf. Proposition \ref{L:monot}.

We begin by introducing a key quantity that will eventually control the entire sprinkling technique. Recall the definition of $\mathbf{P}_{x,\varphi}^{\mathcal{G}_{\Lambda},\xi}$ above \eqref{eq:RATES}. Given $\Lambda \subset \subset \mathbb{Z}^d$ and a target set $S' \subset \subset \Lambda$, setting $\Lambda' = \Lambda \setminus S'$, 
we define the (probabilistic) \textit{cross-section of $S'$ with respect to $x$ inside $\Lambda$} as
\begin{equation}\label{eq:invisibility_cond}
\begin{split}
&\Sigma_\Lambda(x,S') = \sup_{\xi \in \Omega } \sup_{\varphi \in \Omega_{\Lambda'}^{\xi}} \frac{\mathbf{P}_{x,\varphi}^{\mathcal{G}_{\Lambda'},\xi}[H_{S'} < H_{\Lambda^c} ]}{1- \mathbf{P}_{x,\varphi}^{\mathcal{G}_{\Lambda'},\xi}[H_{S'} < H_{\Lambda^c} ]}, \text{ for } x\in \Lambda, \text{ and } \\
&\Sigma_\Lambda(S,S')= \sup_{x \in S} \Sigma_\Lambda(x,S'), \text{ for $S \subset \Lambda$}
\end{split}
\end{equation}
(recall that $H_K$, $K \subset \mathbb{Z}^d$, refers to the entrance time in $K$ for the random walk $X_{\cdot}$, cf. \eqref{eq:stoppingtimes}). The killing outside $\Lambda$ will soon be removed by letting $\Lambda \nearrow \mathbb{Z}^d$, and the resulting quantity $\Sigma(S,S')$, see \eqref{eq:def_sigma_inf} below, will later play a pivotal role. Note that $\Sigma_\Lambda(S,S')< \infty$ whenever $S\cap S' =\emptyset$.

Henceforth, in writing $\mu \in \mathscr{W}$, we mean that $\mu \in \mathscr{W}_{\beta=1}(\{V_X\}_X)$, cf. \eqref{eq:W}, for some family $\{ V_X\}_X$ of potentials satisfying \eqref{eq:smooth} - \eqref{eq:elliptic}. The parameter $\beta=1$ will be omitted from all notation. Finally, if $A^h \subset \Omega$ satisfies $A^h \in  \sigma(1\{ \varphi_x \geq h\} ;  x\in S)$, $h \in \mathbb{R}$, then there exists a unique event $A\subset \widehat{\Omega}_S$ such that $A^h =\{ (1\{ \varphi_x \geq h\})_{x\in S} \in A \}$, for all $h \in \mathbb{R}$. Conversely, if $A \subset \widehat{\Omega}_S $ is given, we define $A^h = \{ (1\{ \varphi_x \geq h\})_{x\in S} \in A \}$, $h \in \mathbb{R}$.

\begin{thm} \label{P:dec_ineq} $(\varepsilon > 0, \, h \in \mathbb{R}, \, \mu \in \mathscr{W})$

\medskip
\noindent 
Let $(\Lambda_n)_{n \geq 0}$ be an increasing sequence of finite subsets of $\mathbb{Z}^d$ such that $\mu_{{\Lambda_n}}^0 \stackrel{w}{\rightarrow} \mu$, and suppose $ \emptyset \neq S, S' \subset \subset \mathbb{Z}^d$ are disjoint. Then, with
\begin{equation}\label{eq:def_sigma_inf}
\Sigma =\Sigma(S,S', (\Lambda_n)_{n\geq 0}) \stackrel{\textnormal{def.}}{=} \liminf_{n \to \infty} \Sigma_{\Lambda_n}(S,S'),
\end{equation}
one has, for all increasing $A^h \in \sigma(1\{ \varphi_x \geq h\} ;  x\in S)$ and all bounded continuous functions $f:\Omega \to [0,\infty)$ satisfying $f \in\sigma(\varphi_x; x \in S')$,
\begin{equation}\label{eq:dec_ineq}
\mathbb{E}_{\mu}[1_{A^h}\cdot f]\leq \mu(A^{h- \varepsilon})\cdot \mathbb{E}_{\mu} [f] + \delta_{S, S'}( \varepsilon, \Sigma) \cdot \Vert {f}\Vert_{L^{\infty}(\Omega)},
\end{equation} 
where
\begin{equation}\label{eq:eta_error}
\delta_{S, S'}( \varepsilon, \Sigma) =
c_1 |S| \exp \bigg\{ | \partial_{\textnormal{i}} S'| -c_2  \bigg[ \frac{ \textnormal{cap}^*(S')}{\sigma^*(S,S')^2 |\partial_{\textnormal{i}}S'| }\cdot \frac{\varepsilon}{\Sigma}\,\bigg]^2\bigg\},
\end{equation}
and
\begin{equation}\label{eq:sigma_ref*}
\sigma^*(S,S')= \sup_{x\in S}  \frac{\sup_{y \in S'} g^*(x,y)}{\inf_{y \in S'} g^*(x,y)}.
\end{equation}
\end{thm}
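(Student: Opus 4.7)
The plan is to reduce to finite volume, exploit the domain Markov property to rewrite the expectation as an iterated conditional, sprinkle via a carefully balanced interpolation of boundary data, and bound the resulting error using Brascamp-Lieb. Concretely, fix a sequence $(\Lambda_n)_{n\geq 0}$ as in the hypothesis and work inside $\Lambda = \Lambda_n$ for large $n$, with $S \cup S' \subset \Lambda$. Weak convergence $\mu_{\Lambda_n}^0 \stackrel{w}{\rightarrow} \mu$ handles the passage to the limit at the end (continuity of $f$ is used directly, while $1_{A^h}$ is sandwiched between monotone continuous approximations at an arbitrarily small cost, absorbed in $\varepsilon$ using FKG). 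Setting $\Lambda' = \Lambda \setminus S'$, combining \eqref{eq:CONDEXP} with \eqref{eq:DOMAINMARKOV} yields
\begin{equation*}
\mathbb{E}_{\mu_\Lambda^0}\bigl[1_{A^h}\, f\bigr] = \mathbb{E}_{\mu_\Lambda^0}\bigl[ f \cdot \psi_\Lambda(\varphi) \bigr], \quad \psi_\Lambda(\varphi) \stackrel{\text{def.}}{=} \mathbb{E}_{\mu_{\Lambda'}^{0 \vee \varphi}}\bigl[1_{A^h}\bigr],
\end{equation*}
with $\psi_\Lambda$ depending on $\varphi$ only through the boundary layer $\partial_{\textnormal{i}}^{(2)} S'$, and the problem reduces to comparing $\psi_\Lambda(\varphi)$ with $\mu_\Lambda^0(A^{h-\varepsilon})$ up to an error factor.

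The core pointwise estimate, Proposition \ref{L:monot}, is that on the good event $G_\Lambda = \{|\varphi_y| \leq c\, \varepsilon/\Sigma_\Lambda(S,S') \text{ for all } y \in \partial_{\textnormal{i}}^{(2)} S'\}$ one has $\psi_\Lambda(\varphi) \leq \mu_\Lambda^0(A^{h-\varepsilon})$ deterministically. To establish this, I would introduce an independent copy $\widetilde\varphi$ and consider, for $s \in [0,1]$, the interpolating boundary condition equal to $(1-s)\varphi + s\widetilde\varphi$ along $\partial_{\textnormal{i}}^{(2)} S'$ combined with a uniform push $s\varepsilon$ along $\partial\Lambda$; by the gradient nature of $H_\Lambda$, at $s=1$ this is distributed (after averaging over $\widetilde\varphi$) as $\mu_\Lambda^0$ shifted by $\varepsilon$, producing exactly $\mu_\Lambda^0(A^{h-\varepsilon})$. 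Differentiating in $s$ and invoking Lemma \ref{L:HS} expresses the derivative, quenched in the environment $(\Phi_t)_t$, as a sum of contributions from $\partial_{\textnormal{i}}^{(2)} S'$, weighted by $\mathbf{P}_{x,\varphi}^{\mathcal{G}_{\Lambda'},\xi}[H_{S'} < H_{\Lambda^c}]$, against contributions from $\partial\Lambda$, weighted by the complementary probability times $\varepsilon$. By the very definition of $\Sigma_\Lambda$ in \eqref{eq:invisibility_cond}, the ratio of the two is at most $\Sigma_\Lambda(S,S')$, so on $G_\Lambda$ the $\varepsilon$-push dominates and, invoking the monotonicity of $A^h$, the derivative has the sign compatible with the desired inequality throughout $[0,1]$.

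The remaining tail $\mu_\Lambda^0(G_\Lambda^c)\,\|f\|_\infty$ is then estimated by Lemma \ref{L:error}: a union bound over the at most $c|\partial_{\textnormal{i}} S'|$ sites of $\partial_{\textnormal{i}}^{(2)} S'$ (yielding the $|\partial_{\textnormal{i}} S'|$ exponent term) combined with exponential Chebyshev applied to the relevant linear functional of $\varphi$. Lemma \ref{L:BL} reduces the moment generating function to that of a Gaussian, $\exp\bigl(\tfrac12 \langle \nu, G_\Lambda^* \nu\rangle_{\ell^2}\bigr)$, for a dual vector $\nu$ supported on $\partial_{\textnormal{i}}^{(2)} S'$ arising from the hitting-probability structure. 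The variational principle \eqref{eq:cap2}--\eqref{eq:cap3} for capacity, combined with the fact that $\sigma^*(S,S')$ controls the spatial variation of $g^*(x,\cdot)$ across $S'$ as seen from $S$, bounds this variance in terms of $\sigma^*(S,S')^2 |\partial_{\textnormal{i}} S'|^2 / \textnormal{cap}^*(S')^2$, yielding the Gaussian factor in \eqref{eq:eta_error}; the prefactor $|S|$ comes from a union bound over $x \in S$ employed in reducing $A^h$ to a base of elementary up-events, and passing $n \to \infty$ produces $\liminf_n \Sigma_{\Lambda_n}(S,S') = \Sigma$.

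The hardest step is the balancing in Proposition \ref{L:monot}: the Helffer-Sj\"ostrand derivative must decompose cleanly and quenched in the environment into boundary contributions whose $\partial_{\textnormal{i}}^{(2)} S'$-to-$\partial\Lambda$ ratio is precisely governed by $\Sigma_\Lambda$, uniformly in $\varphi, \xi$. The signed nature of $\widetilde\varphi$ obstructs any direct monotonicity in the fields, so the $\varepsilon$-push along $\partial\Lambda$ has to be used as a buffer and the monotonicity of $A^h$ applied at the level of the $s$-derivative rather than at the level of the fields themselves---this is the delicate point. Everything else---concentration via Brascamp-Lieb, the capacity bound, the comparison of random-walk hitting distributions encoded in $\sigma^*$, and the passage to the limit---is then relatively standard given the finite-range, uniformly convex setup.
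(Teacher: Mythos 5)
Your overall strategy is the paper's: reduce to finite volume via the conditional probability $Z^h(\varphi_{S'})=\mu_{\Lambda\setminus S'}^{0\vee\varphi}(A^h)$, interpolate the boundary data towards an independent copy while pushing $\partial\Lambda$ up by $\varepsilon$, compute the $t$-derivative through the Helffer--Sj\"ostrand representation so that it becomes an exit-distribution average of $\dot\xi$, balance the $\partial_{\text{i}}S'$ against the $\partial\Lambda$ contribution via the definition of $\Sigma_\Lambda$, and control the bad set by Brascamp--Lieb plus the capacity/$\sigma^*$ estimates before letting $n\to\infty$. However, there is a genuine gap in your formulation of the key step. You claim that on a good event depending \emph{only} on $\varphi$, namely $\{|\varphi_y|\le c\varepsilon/\Sigma_\Lambda \text{ on }\partial_{\text{i}}^{(2)}S'\}$, one has the deterministic bound $\psi_\Lambda(\varphi)\le\mu_\Lambda^0(A^{h-\varepsilon})$. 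Your own interpolation cannot deliver this: the derivative along $S'$ involves $\varphi_y-\widetilde\varphi_y$, so the balancing fails whenever the independent copy $\widetilde\varphi$ is atypically negative on $\partial_{\text{i}}S'$, an event your good set does not exclude. The paper's Proposition \ref{L:monot} is accordingly weaker and two-sided: the good event $G_{\Lambda,M}$ constrains the pair $(\varphi,\widetilde\varphi)$ (through the conditional expectation of $\varphi_{X_{\tau_{\Lambda'}}}-\widetilde\varphi_{X_{\tau_{\Lambda'}}}$ given exit through $\partial_{\text{i}}S'$), the conclusion is $Z^h(\varphi_{S'})\le Z^{h-\varepsilon}(\widetilde\varphi_{S'})$ on that event, and the product $\mu(A^{h-\varepsilon})\,\mathbb{E}_\mu[f]$ is only recovered after integrating out $\widetilde\varphi$ using independence; the bad set is then estimated under $Q_\Lambda^0=\mu_\Lambda^0\otimes\mu_\Lambda^0$, not under $\mu_\Lambda^0$ alone. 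As written, your error accounting ($\mu_\Lambda^0(G_\Lambda^c)\,\|f\|_\infty$) omits the $\widetilde\varphi$-contribution; the fix is to symmetrize the good event in both copies (or keep the paper's conditional-expectation form), which is routine but changes the statement of your intermediate proposition.

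Two further inaccuracies, less serious but worth fixing: at $s=1$ your boundary condition is $\widetilde\varphi$ on $S'$ and $\varepsilon$ on $\partial\Lambda$, which after averaging over $\widetilde\varphi$ is \emph{not} $\mu_\Lambda^0$ shifted by $\varepsilon$; the paper interpolates to $\widetilde\varphi+\varepsilon$ on $S'$ (cf. \eqref{eq:dynamic_bc}), so that the endpoint is an exact global shift and $\mathscr{F}_\Lambda^\omega(1)=Z^{h-\varepsilon}(\widetilde\varphi_{S'})$ by a change of variables; with your endpoint you would additionally need monotonicity in boundary conditions to conclude. Also, in the error term the factor $e^{|\partial_{\text{i}}S'|}$ in \eqref{eq:eta_error} does not come from a union bound over the sites of $\partial_{\text{i}}S'$ (that would only give a prefactor $|\partial_{\text{i}}S'|$); in the paper it arises from summing over the $2^{|\partial_{\text{i}}S'|}$ possible sets $\{y\in\partial_{\text{i}}S':\varphi_y\ge0\}$ when applying the exponential Chebyshev/Brascamp--Lieb bound to the positive part $\langle G^*\bar\nu_x^{S'},\varphi^+\rangle$, and the $|S|$ prefactor comes from the supremum over the walk's starting points $x\in S$ in the bad event, not from decomposing $A^h$ into elementary increasing events.
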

 
\begin{remark}\label{R:dec_ineq1} 1) (Sharpness of \eqref{eq:dec_ineq}). In case $f$ is increasing in $\varphi$, a companion lower bound to  \eqref{eq:dec_ineq} can be obtained from the FKG-inequality, which is an immediate consequence of \eqref{eq:HS2}, yielding $\mathbb{E}_{\mu}[1_{A^h}\cdot f] \geq  \mu(A^{h})\cdot \mathbb{E}_{\mu} [f]$ (note that $\mu(A^{h}) \leq \mu(A^{h-\varepsilon})$).\\
2) (Error term). The size of $\delta_{S, S'}( \varepsilon, \Sigma)$, cf. \eqref{eq:eta_error}, hinges on a careful balance between the sprinkling parameter $\varepsilon > 0$, the geometry of $S$, $S'$, and their relative \text{size}, as measured by the cross section $\Sigma$, which enters as an ``invisibility'' condition for the walk $\mathbf{P}_{x,\varphi}^{\mathcal{G}_{\Lambda'}}$, see \eqref{eq:invisibility_cond}: the smaller $\Sigma$ is, the better the error term $\delta_{S, S'}$. This is reminiscent of the Gaussian case, cf. the discussion following \eqref{eq:harmonic_ext}, and also \cite{RoS13}, (2.30) and (2.31), where one could roughly afford to choose $\varepsilon > 0$ as
$$
\varepsilon \approx \sup_{x\in S} E_x^*[\varphi_{Z_{H_{S'}}} 1\{ H_{S'} < \infty\}].
$$
3) A somewhat different error term, more desirable for certain applications, is derived below in Theorem \ref{T:dec_ineq2}. Its proof \textit{relies} however on Theorem \ref{P:dec_ineq}.\\
4) An analogue of \eqref{eq:dec_ineq} holds if $A^h$ is replaced by a \textit{decreasing} event $B^h$ satisfying the same measurability assumptions as $A^h$. In that case, defining the `flipped' event $\overline{B}= \{ \omega^f; \, \omega \in B \}$, where $\omega^f$ is obtained from $\omega \in \{ 0,1\}^S$ by changing all the entries, noting that $\mu(B^h)=\mu(\overline{B}^{-h})$, which follows because $\varphi$ and $-\varphi$ have the same law under $\mu$, cf. \eqref{eq:sym}, and applying Theorem \ref{P:dec_ineq} to the increasing event $\overline{B}^{-h}$, one obtains 
\begin{equation}\label{eq:dec_ineq_bis}
\mathbb{E}_{\mu}[1_{B^h}\cdot f]\leq \mu(B^{h+ \varepsilon})\cdot \mathbb{E}_{\mu} [f] + \delta_{S, S'}( \varepsilon, \Sigma) \cdot \Vert {f}\Vert_{L^{\infty}(\Omega)}.
\end{equation} 
\end{remark}

The proof of Theorem \ref{P:dec_ineq} hinges on two key results, Proposition \ref{L:monot} and Lemma \ref{L:error}. The former, of independent interest, contains the gyst of the decoupling argument for conditional distributions of gradient measures. Its proof is the main object of this section, and makes crucial use of the Helffer Sj\"ostrand representation. The decoupling gives rise to a certain error term, which eventually leads to \eqref{eq:eta_error} above, and needs to be controlled. This is done in Section \ref{S:error}. Together, the two results will readily imply Theorem \ref{P:dec_ineq}. Its proof is found at the end of Section \ref{S:error}.

We now investigate certain conditional distributions of $\mu \in \mathscr{W}$. Throughout the remainder of this section, we assume that $S,S' ,\Lambda \subset \subset \mathbb{Z}^d$ satisfy
\begin{equation}\label{eq:dec:A_sets}
\begin{split}
&S \cup {S'}  \subset \Lambda, \, S\cap S' =\emptyset, \text { $A \in \sigma(Y_x; x \in S)$ is increasing, and $h \in \mathbb{R}$}.
\end{split}
\end{equation}
Consider
\begin{equation} \label{eq:dec10}
\begin{split}
Z^h(\varphi_{S'}) \stackrel{\text{def.}}{=} 
 \mu_{\Lambda \setminus S'}^{0 \vee \varphi}  (A^h)\end{split},
\end{equation}
where $0 \vee \varphi$ specifies the boundary condition for $\Lambda' = \Lambda \setminus S'$, cf. \eqref{eq:bc_concat}, which vanishes on $\Lambda^c$ and equals $\varphi$ on $S'$. 
As explained around \eqref{eq:CONDEXP}, $Z^h(\varphi_{S'})$ represents a choice of regular conditional distribution, i.e.
\begin{equation} \label{eq:dec10.1}
\begin{split}
 Z^h(\varphi_{S'}) = \mathbb{E}_{\mu_{\Lambda}^{0}}(A^h | \,  \varphi_x, \, x \in S').  
 \end{split}
\end{equation}
Assume henceforth that $\omega=(\varphi,\widetilde{\varphi}) \in \Omega\times \Omega$ is distributed under $Q_{\Lambda}^{0} (= \mu_{\Lambda}^{0} \otimes \widetilde{\mu}_{\Lambda}^{0})$ as two independent copies of the field $\varphi$ under $\mu_{\Lambda}^{0}$. Roughly speaking, we aim to show that for a suitably defined (good) event $G = G(\varepsilon, S,S',\Lambda) \subset \Omega \times \Omega $, one has 
$
\{ Z^h(\varphi_{S'})  >  Z^{h-\varepsilon}(\widetilde{\varphi}_{S'}) \}  \subset G^c, 
$
 in such a way that $G$ has high probability whenever $S'$ is 
 sufficiently invisible, i.e. whenever the cross-section $\Sigma_{\Lambda}(S,S')$ is sufficiently small. This is essentially the content of Proposition \ref{L:monot} below. The control for the probability of $G^c$ in terms of $\Sigma$ is deferred to the next section.
 
 The proof involves an interpolation argument relying on the Helffer-Sj\"ostrand representation, which we set up next. For $t\in [0,1]$, we define the (random) interpolated boundary condition $\xi^\omega(t)=( \xi^{\omega}_x(t))_{x}$, for $\omega= (\varphi, \widetilde{\varphi})$, $x \in (\Lambda')^c =   \Lambda^c \cup S'$, and $\varepsilon >0$, as
\begin{equation}\label{eq:dynamic_bc}
 \xi^{\omega}_x(t)=
\begin{cases} 
t\varepsilon, & x \in  \Lambda^c \\
(1-t)\varphi_x + t(\widetilde{\varphi}_x+\varepsilon), & x \in S',
\end{cases}
\end{equation}
noting that $\xi^\omega(0)= 0 \vee \varphi$ and $\xi^\omega(1)= (0 \vee \widetilde{\varphi}) + \varepsilon$ (where, with hopefully obvious notation, one adds $\varepsilon$ to each component of $\xi^\omega(1)$), and consider the function, for $\omega = (\varphi, \widetilde{\varphi})$
\begin{equation}\label{eq:def_F}
\mathscr{F}_{\Lambda}^{\omega}: [0,1] \to \mathbb{R}, \ t \mapsto \mu_{{\Lambda\setminus S'}}^{\xi^{\omega}(t)}(A^{h}).
\end{equation}
To keep the notation light, we will often omit the superscript $\omega$ from $\xi^{\omega}(t)$.
\begin{remark}\label{R:idea_dec} It is plain to see that $\mathscr{F}_{\Lambda}^{\omega} \in C^1[0,1]$ and we will soon show, cf. \eqref{eq:F_monot}, that it is in fact increasing for ``most'' $\omega$ (in a sense to be made precise). This is not at all evident since the possible decrease in value of the boundary condition along $S'$ in \eqref{eq:dynamic_bc} acts \textit{against} the desired monotonicity ($A^{h}$ is \textit{increasing} in $ \varphi$). The idea is that this can typically, i.e. for ``most'' $\omega$, be controlled by the following \textit{balancing} mechanism: since the boundary condition $\xi^{\omega}(\cdot)$ only decreases along a sufficiently \textit{small} portion of the boundary, $S'$, as will be quantified in terms of the cross-section $\Sigma$, the corresponding error can be absorbed by a slight global upward push of the field, parametrized by $\varepsilon$. \hfill $\square$
\end{remark}

Recall the definition of $\mathbf{P}_{x,\varphi}^{\mathcal{G}_{\Lambda},\xi}$, cf below \eqref{eq:Dirichletform}. For $M>0$, let
\begin{equation}\label{eq:dec15}
\begin{split}
&G_{\Lambda,M}^c \equiv G_{\Lambda,M,S,S'}^c(\omega)= \bigcup_{t\in \mathbb{Q} \cap [0,1]} \bigcup_{(x,\varphi') \in S \times \mathbb{Q}^{\Lambda'}} \Big\{  \mathbf{E}_{x, \varphi'}^{\mathcal{G}_{\Lambda'}, \xi^{\omega}(t)} \big[{\varphi}_{X_{\tau_{\Lambda'}}} - \widetilde{\varphi}_{X_{\tau_{\Lambda'}}}  \big|X_{\tau_{\Lambda'}} \in \partial_{\text{i}}S'\big]  > M    \Big\}, 
\end{split}
\end{equation}
(part of $\Omega \times \Omega$), where $\tau_{\Lambda'}= H_{(\Lambda')^c}$, cf. \eqref{eq:stoppingtimes}, and
\begin{equation}\label{eq:decM}
M_{\varepsilon, \Sigma_{\Lambda}} =  \varepsilon( \Sigma_{\Lambda}^{-1}(S,S')+1), \quad \text{cf. \eqref{eq:invisibility_cond} for the definition of $\Sigma_{\Lambda}$}.
\end{equation}
Note that $M_{\varepsilon, \Sigma_{\Lambda}} > 0$ by \eqref{eq:dec:A_sets}. The following proposition comprises the desired ``quenched'' monotonicity statement for the function $\mathscr{F}_{\Lambda}^{\omega}$ defined in \eqref{eq:dynamic_bc},  \eqref{eq:def_F}. 
\begin{proposition} \label{L:monot}$(\text{\eqref{eq:dec:A_sets}}, \varepsilon > 0, M \leq M_{\varepsilon, \Sigma_{\Lambda}})$,
\begin{equation}\label{eq:F_monot}
\frac{d\mathscr{F}_{\Lambda}^{\omega}(t)}{dt} \geq 0, \text{ for all $ t\in [0,1]$ and $\omega\in G_{\Lambda,M}$.}
\end{equation}
Moreover,
\begin{equation}\label{eq:sprinkling}
\{ Z^h(\varphi_{S'})  >  Z^{h-\varepsilon}(\widetilde{\varphi}_{S'}) \}  \subset G_{\Lambda,M}^c.
\end{equation}
\end{proposition}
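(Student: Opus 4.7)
The inclusion \eqref{eq:sprinkling} is a direct consequence of \eqref{eq:F_monot}. By definition, $\mathscr{F}_\Lambda^\omega(0) = \mu_{\Lambda'}^{0 \vee \varphi}(A^h) = Z^h(\varphi_{S'})$; and the gradient invariance $H_{\Lambda'}(\varphi + c) = H_{\Lambda'}(\varphi)$ for constant $c$ implies, via a shift of the integration variable $\varphi \mapsto \varphi - c$, that $\mu_{\Lambda'}^{\eta + c}(A^h) = \mu_{\Lambda'}^{\eta}(A^{h-c})$, which with $\eta = 0 \vee \widetilde{\varphi}$ and $c = \varepsilon$ gives $\mathscr{F}_\Lambda^\omega(1) = Z^{h-\varepsilon}(\widetilde{\varphi}_{S'})$. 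Monotonicity on $G_{\Lambda, M}$ therefore forces $Z^h(\varphi_{S'}) \leq Z^{h-\varepsilon}(\widetilde{\varphi}_{S'})$ there, which is \eqref{eq:sprinkling}.

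The bulk of the proof is thus \eqref{eq:F_monot}. I would approximate $1_{A^h}$ by smooth increasing functions $F_n \in \textnormal{Dom}(L_{\Lambda'}^{\xi(t)})$ depending only on $\varphi_S$, run the argument for $\mathscr{F}_{\Lambda, n}^\omega(t) := \mathbb{E}_{\mu_{\Lambda'}^{\xi(t)}}[F_n]$, and pass to the limit. Differentiating under the Gibbs integral yields $\frac{d\mathscr{F}_{\Lambda, n}^\omega(t)}{dt} = -\textnormal{Cov}_{\mu_{\Lambda'}^{\xi(t)}}(F_n, H'(t))$, with $H'(t, \varphi) = \sum_{z \in (\Lambda')^c} \partial_z H_{\Lambda'}(\varphi \vee \xi(t)) \cdot \xi'(t)_z$. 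The crux is to feed this into the Helffer--Sj\"ostrand formula \eqref{eq:HS2}: by \eqref{eq:thea}, $\partial_y H'(t)(\varphi) = -\sum_{z \in (\Lambda')^c} a_{y, z}^{\xi(t)}(\varphi)\, \xi'(t)_z$, and $\int_0^{\tau_{\Lambda'}} \sum_{z \in (\Lambda')^c} a_{X_s, z}^{\xi(t)}(\Phi_s)\, \xi'(t)_z\, ds$ is recognisable as the Dynkin compensator of the terminal value $\xi'(t)_{X_{\tau_{\Lambda'}}}$ for the joint Markov process $(X, \Phi)$ with generator $\mathscr{L}_{\Lambda'}^{\xi(t)}$ (cf.\ \eqref{eq:L}), so optional stopping gives the clean identity
\begin{equation*}
\frac{d\mathscr{F}_{\Lambda, n}^\omega(t)}{dt} = \sum_{x \in S} \mathbb{E}_{\mu_{\Lambda'}^{\xi(t)}}\Big[\partial_x F_n(\varphi) \cdot \mathbf{E}_{x, \varphi}^{\mathcal{G}_{\Lambda'}, \xi(t)}\big[\xi'(t)_{X_{\tau_{\Lambda'}}}\big]\Big].
\end{equation*}
Since $F_n$ is increasing, $\partial_x F_n \geq 0$, so it remains to show that the inner annealed expectation is $\geq 0$ on $G_{\Lambda, M}$.

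Splitting by the exit point ($\partial \Lambda' = \partial \Lambda \cup \partial_{\text{i}} S'$), on which $\xi'(t)$ takes the values $\varepsilon$ and $\widetilde{\varphi} + \varepsilon - \varphi$ respectively, the inner expectation reduces to
\begin{equation*}
\varepsilon \,-\, p_x(\varphi)\cdot \mathbf{E}_{x,\varphi}^{\mathcal{G}_{\Lambda'}, \xi(t)}\big[(\varphi - \widetilde{\varphi})_{X_{\tau_{\Lambda'}}}\,\big|\, X_{\tau_{\Lambda'}} \in \partial_{\text{i}} S'\big],
\end{equation*}
with $p_x(\varphi) = \mathbf{P}_{x,\varphi}^{\mathcal{G}_{\Lambda'}, \xi(t)}[H_{S'} < H_{\Lambda^c}]$. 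For $\omega \in G_{\Lambda, M}$ the conditional expectation is $\leq M \leq M_{\varepsilon, \Sigma_\Lambda} = \varepsilon(\Sigma_\Lambda^{-1} + 1)$, while \eqref{eq:invisibility_cond} forces $p_x(\varphi) \leq \Sigma_\Lambda / (1 + \Sigma_\Lambda)$; the product of the two bounds is $\leq \varepsilon$, so the displayed expression is $\geq 0$. This is precisely the balancing of Remark \ref{R:idea_dec}: the uniform push $\varepsilon$ on $\partial \Lambda$ compensates for the possibly negative contribution from $\partial_{\text{i}} S'$, because the latter carries little exit weight for $X$, as measured by $\Sigma_\Lambda$.

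The main technical point is the Helffer--Sj\"ostrand derivative identity; given the framework of \eqref{eq:HScov0}--\eqref{eq:BVP}, the core computation is routine but demands care in handling the $\varphi$-dependence of $H'(t)$ and in applying optional stopping to $(X_t, \Phi_t)$. A secondary subtlety is that $G_{\Lambda, M}$ is defined through rational $(t, \varphi')$ only, while the monotonicity statement must hold for all real $t$ and $\mu_{\Lambda'}^{\xi(t)}$-a.e.\ $\varphi$; this is bridged by joint continuity of $(t, \varphi) \mapsto \mathbf{E}_{x, \varphi}^{\mathcal{G}_{\Lambda'}, \xi(t)}[\,\cdot\,]$, which follows from uniform ellipticity \eqref{eq:elliptic} and standard regularity of the Langevin flow \eqref{eq:Langevin}.
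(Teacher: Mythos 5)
Your proposal is correct and follows essentially the same route as the paper: differentiating under the Gibbs integral, smoothing the indicator, applying the Helffer--Sj\"ostrand formula, identifying the resulting inner quantity as $\mathbf{E}_{x,\varphi'}^{\mathcal{G}_{\Lambda'},\xi(t)}\big[\dot{\xi}_{X_{\tau_{\Lambda'}}}(t)\big]$, and balancing the contributions from $\partial\Lambda$ and $\partial_{\text{i}}S'$ via the definitions of $\Sigma_{\Lambda}$ and $M_{\varepsilon,\Sigma_{\Lambda}}$. The only cosmetic difference is that you derive the harmonic-extension identity by applying Dynkin's formula directly to the boundary datum, whereas the paper verifies that the shifted function $\tilde{u}=u+\dot{\xi}\,1_{(\Lambda')^c}$ solves the homogeneous Dirichlet problem and then invokes optional stopping --- the same computation in a different order.
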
 
\begin{proof}
First, note that \eqref{eq:sprinkling} is an immediate consequence of \eqref{eq:F_monot}. Indeed, the latter implies that $\mathscr{F}_{\Lambda}^{\omega}(0) \leq \mathscr{F}_{\Lambda}^{\omega}(1)$ for all $\omega \in G_{\Lambda,M}$. Due to the gradient nature of the Hamiltonian, cf. \eqref{eq:H_pots}, with a straightforward change of variables, one sees that $ \mu_{{\Lambda\setminus S'}}^{\psi+ \varepsilon}(A^h) = \mu_{{\Lambda\setminus S'}}^{\psi}(A^{h-\varepsilon})$, for any boundary condition $\psi$, and thus
\begin{equation}\label{eq:dec600}
\mathscr{F}_{\Lambda}^{\omega}(1) \stackrel{\eqref{eq:def_F}}{=} \mu_{{\Lambda\setminus S'}}^{\xi^{\omega}(1)}(A^h) \stackrel{\eqref{eq:dynamic_bc}}{=} \mu_{{\Lambda\setminus S'}}^{(0\vee \widetilde{\varphi})+ \varepsilon}(A^h) = \mu_{{\Lambda\setminus S'}}^{0 \vee \widetilde{\varphi}}(A^{h-\varepsilon})  \stackrel{\eqref{eq:dec10}}{=} 
Z^{h-\varepsilon}(\widetilde{\varphi}_{S'}).
\end{equation}
Similarly, \eqref{eq:dynamic_bc} and \eqref{eq:def_F} imply that $Z^{h}({\varphi}_{S'}) = \mathscr{F}_{\Lambda}^{\omega}(0)$. Thus, \eqref{eq:sprinkling} follows. 

We now show \eqref{eq:F_monot}. Since $d\mathscr{F}_{\Lambda}^{\omega}(t)/dt$ is continuous, it is enough to prove \eqref{eq:F_monot} for $t \in \mathbb{Q} \cap [0,1]$, which we assume from now on. With $\Lambda' =\Lambda \setminus S'$, write 
\begin{equation}\label{eq:H_bcxi}
{H}_{\Lambda'}(\varphi | \xi)\stackrel{\text{def.}}{=} {H}_{{\Lambda}'}((\varphi_x)_{x\in \Lambda'}, (\xi_x)_{x\notin\Lambda'}),
\end{equation}
with ${H}_{\Lambda'}$ as defined in \eqref{eq:H_pots}. We view ${H}_{\Lambda'}(\cdot | \xi)$ as a map $\Omega_{\Lambda'}\to \mathbb{R}$. Then, denoting by $\langle \cdot \rangle_{\lambda}$ integration with respect to Lebesgue measure on $\Omega_{\Lambda'}$, it follows by dominated convergence from \eqref{eq:mu_Lambda}, with $\xi(t)=\xi^{\omega}(t)$ as in \eqref{eq:dynamic_bc}, that
\begin{equation}\label{eq:dec499}
\begin{split}
\frac{d\mathscr{F}_{\Lambda}^{\omega}}{dt} &= \frac{d}{dt}\bigg[\frac{\langle 1_{A^{h}}(\varphi') e^{- {H}_{\Lambda'}(\varphi' | \xi(t))}\rangle_{\lambda(d\varphi')}}{\langle e^{- {H}_{\Lambda'}(\varphi' | \xi(t))} \rangle_{\lambda(d\varphi')}} \bigg]
= \textnormal{Cov}_{\mu_{\Lambda'}^{\xi(t)}}\Big(1_{A^{h}}(\cdot), \frac{-d {H}_{\Lambda'}(\cdot | \xi(t))}{dt} \Big).
\end{split}
\end{equation}
By convolution with a suitable mollifier, we can arrange for $(F_{\delta})_{\delta >0}$ to be a family smooth approximations of the function $F \equiv 1_{A^{h}}$, such that 
\begin{equation}\label{eq:dec4999}
\begin{split}
&F_{\delta}, \partial_x F_{\delta},  \partial^2_{x,y} F_{\delta} \in C^{\infty}(\Omega_{\Lambda'}) \cap L^{\infty}(\lambda), \, x,y \in \Lambda', \, \sup_{\delta}\Vert F_{\delta}\Vert_{\infty} \leq 1, \\
&F_{\delta} \to F \text{ ptw. as $ \delta \to 0$, and }  \partial_x F_{\delta} \geq 0, \, x \in S,\, \partial_x F_{\delta} = 0, \, x \in \Lambda' \setminus S,
\end{split}
\end{equation}
where $\partial_x$ denotes partial derivative with respect to $\varphi_x$ (regarding boundedness, taking $k_\delta$ a (suitably rescaled) standard mollifier with compact support, and setting $F_{\delta}= k_\delta * F$, one has for instance $\Vert\partial_x F_{\delta} \Vert_{\infty}= \Vert(\partial_x k_{\delta}) * F\Vert_{\infty} \leq \Vert(\partial_x k_{\delta})\Vert_1  < \infty $, using Young's inequality.) The non-negativity of 
$\partial_x F_{\delta}$ is due to the fact that $A^h(\cdot)$ is increasing. To keep the notation light, we will often write $\langle \cdot \rangle_{\Lambda'}^{\psi(t)}$ below to denote expectation with respect to $\mu_{\Lambda'}^{\xi(t)}$. Clearly, by \eqref{eq:dec4999} and dominated convergence,
\begin{equation}\label{eq:dec4999.0}
\begin{split}
\langle (F-F_\delta)^2 \rangle_{\Lambda'}^{\xi(t)} &= (Z_{\Lambda'}^{\xi(t)})^{-1}\langle e^{- {H}_{\Lambda'}(\cdot | \xi(t))} (F-F_\delta)^2 \rangle_{\lambda(d\varphi')} \to 0, \text{ as } \delta \to 0.
\end{split}
\end{equation}
We now apply the Helffer-Sj\"ostrand formula, cf. Lemma \ref{L:HS}, to the right-hand side of \eqref{eq:dec499}.
Note that $F_{\delta} \in \text{Dom}(L_{\Lambda'}^{\xi(t)})$, by \eqref{eq:dec4999}, cf. \eqref{eq:dom_L}, and also $G(\cdot)=  \frac{-d {H}_{\Lambda'}(\cdot | \xi(t))}{dt} \in \text{Dom}(L_{\Lambda'}^{\xi(t)})$, as follows from \eqref{eq:dec501} below, on account of \eqref{eq:H_pots} and \eqref{eq:elliptic}. Thus,
\begin{equation} \label{eq:dec500}
\begin{split}
\textnormal{Cov}_{\mu_{\Lambda'}^{\xi(t)}}(F_{\delta},G) &\stackrel{\eqref{eq:HS2}}{=} \sum_{x \in \Lambda'} \int_0^{\infty} ds E_{x, \mu_{\Lambda'}^{\xi(t)}}[\partial F_{\delta}(x, \Phi_0) \cdot \partial G (X_s, \Phi_s) 1\{ s < \tau_{\Lambda'} \}] \\
&\stackrel{\text{Fubini}}{=} \sum_{x \in S} \Big\langle \partial F_{\delta}(x, \varphi') \cdot\mathbf{E}_{x, \varphi'}^{\mathcal{G}_{\Lambda'}, \xi(t)}\Big[\int_0^{\tau_{\Lambda'}} ds \,  \partial G (X_s, \Phi_s) \Big]\Big\rangle_{\Lambda'}^{\xi(t)},
\end{split}
\end{equation}
where the summation is over $x \in S$ due to \eqref{eq:dec4999}, $\langle \cdot \rangle_{\Lambda'}^{\xi(t)}$ governs the field $\varphi'$, and recall that $\Phi_s = (\Phi_s(x))_{x\in \Lambda'}$, $s \geq 0$, is the (unique strong) solution of the SDE's \eqref{eq:Langevin} with boundary condition $\Phi_s(x) = \xi_x(t)$ for all $x\in \partial\Lambda'$ and initial data $(\Phi_0)_{|_{\Lambda'}}= \varphi'$, $(X_s)_{s\geq 0}$ is the jump process on $\mathcal{G}_{\Lambda'}$ with generator \eqref{eq:defQ} and dynamic conductances given by \eqref{eq:RATES} (with $\Lambda'$ in place of $\Lambda$, and $\xi=\xi(t)$), and $P_{x, \mu_{\Lambda'}^{\xi(t)}}$ denotes the joint evolution of $(X_s, \Phi_s)_{s\geq 0}$ with initial condition $X_0=x$ and $\Phi_0$ distributed according to $\mu_{\Lambda'}^{\xi(t)}$. The application of the Fubini-Tonelli theorem in \eqref{eq:dec500} is justified, for the relevant integrand is bounded in absolute value by
$$
\sup_{x' \in \Lambda'} \Vert\partial_{x'} F_{\delta}\Vert_{\infty} \cdot \partial G (X_s, \Phi_s) 1\{ s < \tau_{\Lambda'} \}, 
$$
the first supremum is finite by \eqref{eq:dec4999}, and it follows from \eqref{eq:dec502} below, with the help of \eqref{eq:elliptic}, that the iterated integral $E_{x, \mu_{\Lambda'}^{\xi(t)}}[\int_0^{\tau_{\Lambda'}} ds |\partial G (X_s, \Phi_s)| ]$ converges (note that $E_{x, \mu_{\Lambda'}^{\xi(t)}}[\tau_{\Lambda'}]$ is finite by uniform ellipticity, since $\mathcal{G}_{\Lambda'}$ is a finite graph). We next separately consider the function 
\begin{equation}\label{eq:dec:500.0}
u(x, \varphi') =  \mathbf{E}_{x, \varphi'}^{\mathcal{G}_{\Lambda'}, \xi(t)}\Big[\int_0^{\tau_{\Lambda'}} ds \,  \partial G (X_s, \Phi_s) \Big], \ x\in \Lambda', \varphi' \in \Omega_{\Lambda'},
\end{equation}
 appearing on the right-hand side of \eqref{eq:dec500}. By \eqref{eq:HS1}, \eqref{eq:BVP} (note that $\partial G(x,\varphi)= \partial \widetilde{G} (x, \varphi)$, where $\widetilde{G}:=G-\langle G \rangle_{\Lambda'}^{\psi(t)}$), $u$, extended to $0$ outside $\Lambda'$, is a classical solution to the boundary value problem
 \begin{equation}
 \label{eq:dec:500.1}
 \begin{cases}
 - \mathscr{L}_{\Lambda'}^{\xi(t)} u(x, \varphi') = \partial G(x, \varphi'), & x\in {\Lambda'}, \varphi' \in \Omega_{\Lambda'}, \\
 u(x, \varphi')= 0, & x \notin {\Lambda'}, \varphi' \in \Omega_{\Lambda'}.
\end{cases}
 \end{equation}
 The source term $\partial G(x, \varphi')$ can be computed explicitly as follows. First, observe that
\begin{equation} \label{eq:dec501}
G(\varphi')= \frac{-d {H}_{\Lambda'}(\varphi' | \xi(t))}{dt} \stackrel{\eqref{eq:H_bcxi}}{=} - \sum_{y \notin \Lambda'}\frac{\partial {H}_{\Lambda'}(\varphi' | \xi)}{\partial \xi_y}\Big|_{\xi =  \xi(t)} \cdot \dot{\xi}_y(t),
\end{equation}
where dot denotes derivative with respect to $t$. Consequently, for $x\in \Lambda'$,
\begin{equation}\label{eq:dec502}
\begin{split}
\partial G (x, \varphi') = \frac{\partial G(\varphi')}{\partial \varphi_x'} &=  - \sum_{y \notin\Lambda'}\frac{\partial^2 {H}_{\Lambda'}(\varphi' | \xi)}{\partial \varphi_x' \partial \xi_y}\Big|_{\xi =  \xi(t)} \cdot \dot{\xi}_y(t) 
\end{split}
\end{equation}
which, on account of \eqref{eq:H_pots}, vanishes unless $x \in X$ for some $X \in \mathcal{B}$ with $X \cap  (\Lambda')^c \neq \emptyset$. As we now explain, $u(x, \varphi')$, $x \in \Lambda'$, can be viewed as an $\mathscr{L}_{\Lambda'}^{\xi(t)}$-harmonic function, i.e. a solution to the bare Laplace equation $\mathscr{L}_{\Lambda'}^{\xi(t)}u=0$ (without source term), but with non-vanishing boundary condition. Due to the particular form of \eqref{eq:dec502}, and keeping in mind that $u$ vanishes on the boundary, cf. \eqref{eq:dec:500.1} (which, along with the last equality in \eqref{eq:comm}, is why the action of $Q_{\Lambda'}^{\xi(t), \varphi'}$ can be expressed as in the second line below, with a summation over $\Lambda'$ rather than $\overline{\Lambda}'$), one has, for $x \in \Lambda'$, $\varphi' \in \Omega_{\Lambda'}$,
\begin{equation}\label{eq:dec502.0}
\begin{split}
0 &\stackrel{\eqref{eq:dec:500.1}}{=} - \mathscr{L}_{\Lambda'}^{\xi(t)} u(x, \varphi') - \partial G(x, \varphi') \\
&\underset{\eqref{eq:L}}{\stackrel{\eqref{eq:defQ}}{=}} -L_{\Lambda'}^{\xi(t)}u(x,\varphi') -\bigg[\sum_{y \in {\Lambda'}} \frac{\partial^2 {H}_{\Lambda'}(\varphi')}{\partial \varphi'_x \partial \varphi'_y}\Big|_{\varphi' =  \xi(t) \,\text{on}\, (\Lambda')^c}u(y,\varphi')\bigg] - \partial G(x, \varphi')\\
&\stackrel{\eqref{eq:dec502}}{=}  -L_{\Lambda'}^{\xi(t)}u(x,\varphi') -\sum_{y \in \overline{\Lambda'}} \frac{\partial^2 {H}_{\Lambda'}(\varphi')}{\partial \varphi'_x \partial \varphi'_y}\Big|_{\varphi' =  \xi(t) \,\text{on}\, (\Lambda')^c} [ u(y,\varphi') + \dot{\xi}_y(t)1_{y\notin  \Lambda'}].
\end{split}
\end{equation}
Thus, defining 
\begin{equation}\label{eq:dec:502.2}
\tilde{u}(x,\varphi')= u(x,\varphi') + \dot{\xi}_x(t)1_{x\notin  \Lambda'}, 
\end{equation}
and observing that $ {L}_{\Lambda'}^{\xi(t)} u=  {L}_{\Lambda'}^{\xi(t)} \tilde{u}$, since ${L}_{\Lambda'}^{\xi(t), }$ is a differential operator on $\Omega_{\Lambda'}$, cf. \eqref{eq:hs1}, it follows from \eqref{eq:dec502.0} and \eqref{eq:dec:500.1} that $\tilde{u}$ satisfies
 \begin{equation}
 \label{eq:dec:502.1}
 \begin{cases}
 - \mathscr{L}_{\Lambda'}^{\xi(t)} \tilde u(x, \varphi') =0, & x\in {\Lambda'}, \varphi' \in \Omega_{\Lambda'}, \\
 \tilde u(x, \varphi')= \dot{\xi}_x(t), & x\notin{\Lambda'}, \varphi' \in \Omega_{\Lambda'}.
\end{cases}
 \end{equation}
Hence, for all  $x \in \Lambda', \, \varphi' \in \Omega_{\Lambda'}$,
\begin{equation}\label{eq:dec502.7}
u(x,\varphi') \stackrel{\eqref{eq:dec:502.2}}{=} \tilde{u}(x,\varphi') =  \mathbf{E}_{x, \varphi'}^{\mathcal{G}_{\Lambda'}, \xi(t)}[\dot{\xi}_{X_{\tau_{\Lambda'}}}(t)] 
\end{equation}
(the last equality follows for instance by verifying, on account of \eqref{eq:dec:502.1}, that $M_s = \tilde{u}(X_{s\wedge \tau_{\Lambda'}}, \Phi_{s \wedge \tau_{\Lambda'}})$, $s \geq 0$, is a uniformly integrable martingale with respect to the natural filtration associated to the process $(X_{\cdot}, \Phi_{\cdot})$, and applying the optional stopping theorem). 

Returning to \eqref{eq:dec500}, with \eqref{eq:dec:500.0}, \eqref{eq:dec502.7}, one obtains, for all $t \in \mathbb{Q}\cap [0,1]$ and $\omega \in G_{\Lambda,M}$ (recall this is the randomness governing the boundary condition $\xi(t)= \xi^{\omega}(t)$, cf. \eqref{eq:dynamic_bc}), keeping in mind that $ \partial F_{\delta}(x, \varphi')$ is non-negative, see \eqref{eq:dec4999}, and that $\Lambda' =\Lambda \setminus S$, so that $X_{\cdot}$ exits $\Lambda'$ either through $\partial \Lambda$ or $\partial_{\text{i}}S'$,
\begin{equation} \label{eq:dec510}
\begin{split}
&\textnormal{Cov}_{\mu_{\Lambda'}^{\xi(t)}}(F_{\delta},G) = \sum_{x \in S} \Big\langle \partial F_{\delta}(x, \varphi') \mathbf{E}_{x, \varphi'}^{\mathcal{G}_{\Lambda'}, \xi(t)}[\dot{\xi}_{X_{\tau_{\Lambda'}}}(t)] \Big\rangle_{\Lambda'}^{\xi(t)} \\
&\stackrel{\eqref{eq:dynamic_bc}}{=} \sum_{x \in S} \Big\langle \partial F_{\delta}(x, {\varphi}') \Big[\varepsilon \mathbf{P}_{x, \varphi'}^{\mathcal{G}_{\Lambda'}, \xi(t)}[X_{\tau_{\Lambda'}} \in \partial \Lambda] - \sum_{y \in \partial_{\text{i}}S'} (\varphi_y - \widetilde{\varphi}_y - \varepsilon) \cdot \mathbf{P}_{x, \varphi'}^{\mathcal{G}_{\Lambda'}, \xi(t)}[X_{\tau_{\Lambda'}} = y] \Big] \Big\rangle_{\Lambda'}^{\xi(t)}\\
&\stackrel{\eqref{eq:decM}
}{\geq} \sum_{x \in S} \Big\langle \partial F_{\delta}(x, {\varphi}') \mathbf{P}_{x, \varphi'}^{\mathcal{G}_{\Lambda'}, \xi(t)}[X_{\tau_{\Lambda'}} \in \partial_{\text{i}} S'] \cdot \Big[M_{\varepsilon,\Sigma_{\Lambda}} - \mathbf{E}_{x, \varphi'}^{\mathcal{G}_{\Lambda'}, \xi(t)}[ \varphi_{X_{\tau_{\Lambda'}}}  -\widetilde{\varphi}_{X_{\tau_{\Lambda'}}}  |X_{\tau_{\Lambda'}} \in \partial_{\text{i}} S'] \Big] \Big\rangle_{\Lambda'}^{\xi(t)}\\
&\stackrel{\eqref{eq:dec15}}{\geq} (M_{\varepsilon,\Sigma_{\Lambda}} - M) \sum_{x \in S} \Big\langle \partial F_{\delta}(x, {\varphi}') \mathbf{P}_{x, \varphi'}^{\mathcal{G}_{\Lambda'}, \xi(t)}[X_{\tau_{\Lambda'}} \in \partial_{\text{i}} S'] \Big\rangle_{\Lambda'}^{\xi(t)} \stackrel{\eqref{eq:dec4999}}{\geq} 0,
\end{split}
\end{equation}
since $M \leq M_{\varepsilon,\Sigma_{\Lambda}}$ by assumption. The third line in \eqref{eq:dec510} explains the specific choice of the cut-off parameter $M_{\varepsilon,\Sigma_{\Lambda}}$ in \eqref{eq:decM}. Finally, returning to $\mathscr{F}_{\Lambda}^{\omega}$, it follows that
\begin{equation} \label{eq:dec510.0}
\begin{split}
\frac{d\mathscr{F}_{\Lambda}^{\omega}}{dt} &\stackrel{\eqref{eq:dec499}}{=} \textnormal{Cov}_{\mu_{\Lambda'}^{\xi(t)}}(F,G)  \geq \textnormal{Cov}_{\mu_{\Lambda'}^{\xi(t)}}(F_{\delta},G) - \Big|\textnormal{Cov}_{\mu_{\Lambda'}^{\xi(t)}}(F- F_{\delta},G)\Big| \\
&\stackrel{\eqref{eq:dec510}}{\geq} - \Vert F-F_{\delta}\Vert_{L^2(\mu_{\Lambda'}^{\xi(t)})} \Vert G_c\Vert_{L^2(\mu_{\Lambda'}^{\xi(t)})},
\end{split}
\end{equation}
where $ G_c= G- \mathbb{E}_{\mu_{\Lambda'}^{\xi(t)}}[G]$, using Cauchy-Schwarz in the last step. The claim \eqref{eq:F_monot} follows by letting $\delta \to 0$ in \eqref{eq:dec510.0}, using \eqref{eq:dec4999.0}.
\end{proof}

\section{Comparison estimates and the error term}\label{S:error}

The successful application of Proposition \ref{L:monot} hinges on a suitable bound on the probability of $G_{\Lambda, M}^c$, cf. \eqref{eq:dec15}, as $\Lambda \nearrow \mathbb{Z}^d$, which is derived in this section, see Lemma \ref{L:error} below. It relies on suitable estimates for hitting probabilities of random walks among time-dependent conductances, in terms of the corresponding quantities for simple random walk, which we derive first, see Lemma \ref{L:hit_comparison}. This lemma is of independent interest, so we state it in reasonable generality. Its tailored version is stated thereafter in Corollary \ref{C:exit_dist}. We will also need to compare entrance {distributions}, which is done in Lemma \ref{L:hit_dist1}, in order to obtain the desired bound on the error term in Lemma \ref{L:error}. Theorem \ref{P:dec_ineq} then follows readily from Proposition \ref{L:monot} and Lemma \ref{L:error}. The proof can be found at the end of this section.

We consider a space $\mathcal{A}$ of environments on $\mathcal{G}= (\mathbb{Z}^d, \mathscr{E})$, defined as
\begin{equation}\label{eq:space_environments}
\mathcal{A}= \{ a: \mathbb{R}_+ \times \mathscr{E} ; \, a_t(e)= a_t(-e), \, c_0^{-1} \leq  a_t(e) \leq c_0, \text{ for all } t\geq 0, \, e \in \mathscr{E} \},
\end{equation}
and write $P_x^{a}$, $x \in \mathbb{Z}^d$, $a \in \mathcal{A}$, for the canonical law of the continuous-time jump process with generator 
\begin{equation}\label{eq:gen_a}
\mathscr{L}_t^a f(x) = \sum_{y:y \sim x}a_t(x,y)(f(y)-f(x)), 
\end{equation}
acting on bounded functions $f: \mathbb{Z}^d\to \mathbb{R}$. We write $(X_t)_{t \geq 0}$ for the corresponding canonical process. The total jump rate $\sum_{e:x(e)=x}a_t(e)$ out of $x$ at time $t \geq 0$ is not normalized, 
and the process $(X_t)_{t \geq 0}$ is usually referred to as \textit{variable-speed} random walk (among dynamic conductances).  Recall that $P_x^{*}$ refers to the law of (continuous-time) simple random walk on $\mathcal{G}$, which corresponds to the environment $a\in \mathcal{A}$ with $a(t,x)= |\Gamma|^{-1}$, for all $t,x$ (by possibly redefining $c_0$, we may assume that $c_0 \geq |\Gamma|$). Denoting by $p^a(x,y;t)=P_x^{a}[X_t=y]$ the corresponding heat kernel, one has from \cite{GOS01}, Props. B.3 and B.4, that for all $a \in \mathcal{A}$,
\begin{align}
&p^a(x,y;t) \geq \frac{c}{1\vee t^{d/2}}, \text{ if } |x-y| \leq \sqrt{t} \label{eq:HKLB}\\
&p^a(x,y;t) \leq \frac{c'}{1\vee t^{d/2}}e^{-\frac{|x-y|}{1\vee \sqrt{t}}}, \text{ for } t\geq 0, x,y \in \mathbb{Z}^d \label{eq:HKUB}.
\end{align}
(In fact one has Gaussian upper bounds in the long-time regime, see \cite{DD05}, Prop. 4.2.) The constants $c,c'$ depend on $d$, $\Gamma$ and $c_0$ as appearing in \eqref{eq:space_environments}, but are otherwise uniform in $a$. From these heat kernel estimates, one classically deduces for the corresponding Green function $g^a(x,y)= \int_0^{\infty} P_x^{a}[X_t =y] dt$, first by restricting the integral to $t \geq |x-y|^2$ and using \eqref{eq:HKLB} to obtain a lower bound, or splitting the integral at $t=1$ and using \eqref{eq:HKUB} to obtain an upper bound, that 
\begin{equation}\label{eq:GF_bounds}
\frac{c}{1 \vee |x-y|^{d-2}} \leq g^a(x,y) \leq \frac{c'}{ 1\vee |x-y|^{d-2}}, \text{ for } a\in \mathcal{A}, x,y \in \mathbb{Z}^d.
\end{equation}
The next lemma yields a comparison estimate for hitting probabilities of $P_x^{a}$ with those of simple random walk, and will be used in several instances below. 
The rationale of its proof is that, in spite of the nuisance due to the dynamic environment $a$, given bounds as in \eqref{eq:GF_bounds} for the Green function, one can often afford to develop an ``approximate'' potential theory, by which 
$g^*(\cdot ,\cdot)$ replaces the kernel $g^{a}(\cdot, \cdot)$. 

\begin{lem} $(a \in \mathcal{A})$
\label{L:hit_comparison}
\medskip
\noindent
There exists $c_{3} \in (1,\infty)$, such that, for all $U\subset \subset \mathbb{Z}^d$, defining
\begin{equation}\label{eq:sigma}
\sigma_x^*(U) = \frac{\sup_{y \in U} g^*(x,y)}{\inf_{y \in U} g^*(x,y)},
\end{equation}
one has, for all $x \notin U$,
\begin{align}
&\big( c_{3}\sigma_x^*(U) \big)^{-1} P_x^*[H_U < \infty] \leq P_x^a[H_U < \infty] \leq \big(c_{3}\sigma_x^*(U)\big) P_x^*[H_U < \infty] \label{eq:CE1}.
\end{align}
\end{lem}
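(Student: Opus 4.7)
The strategy is to develop a rudimentary potential theory for the inhomogeneous walk $(X_t, P_x^a)$, using the uniform bounds \eqref{eq:GF_bounds} to substitute $g^*$ for $g^a$ up to multiplicative constants, and then invoking classical identities for SRW.

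First, one establishes a first-entrance identity for $g^a$. For $y \in U$ and $x \notin U$, visits to $y$ under $P_x^a$ can only occur after $H_U$, and the strong Markov property at $H_U$---which applies since the time-shifted environment $\theta_{H_U} a$ remains in $\mathcal{A}$, cf.~\eqref{eq:space_environments}---yields
$$ g^a(x,y) = E_x^a\big[g^{\theta_{H_U}a}(X_{H_U}, y);\, H_U < \infty\big]. $$
Integrating this against the normalized SRW equilibrium measure $\nu := \bar{e}_U^* = e_U^*/\text{cap}^*(U)$, cf.~\eqref{eq:equil_meas}--\eqref{eq:cap3}, gives $(G^a\nu)(x) = E_x^a[(G^{\theta_{H_U}a}\nu)(X_{H_U}); H_U < \infty]$, where $(G^{a'}\nu)(z) := \sum_y g^{a'}(z,y)\nu(y)$. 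A pointwise application of \eqref{eq:GF_bounds} shows that $(G^{a'}\nu)(z) \asymp (G^*\nu)(z)$, uniformly in $a' \in \mathcal{A}$ and $z$. For $z \in U$, the classical identity $\sum_y g^*(z,y) e_U^*(y) = P_z^*[H_U<\infty] = 1$ evaluates $(G^*\nu)(z) = 1/\text{cap}^*(U)$, whence
$$ (G^a\nu)(x) \asymp P_x^a[H_U<\infty]/\text{cap}^*(U). $$

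Applying \eqref{eq:GF_bounds} once more yields $(G^a\nu)(x) \asymp (G^*\nu)(x)$ for $x \notin U$. Since $\bar{e}_U^*$ is a probability measure on $U$, one has the elementary bracketings
$$ \inf_{y\in U} g^*(x,y) \leq (G^*\nu)(x) \leq \sup_{y\in U} g^*(x,y), $$
and analogously $\text{cap}^*(U) \inf_{y\in U} g^*(x,y) \leq P_x^*[H_U<\infty] \leq \text{cap}^*(U) \sup_{y\in U} g^*(x,y)$. Combining these bracketings with the previous displays produces \eqref{eq:CE1}, the factor $\sigma_x^*(U)$ entering precisely as the ratio between the $\sup$ and $\inf$ bounds for $g^*(x,\cdot)$ on $U$.

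The principal obstacle is the absence of a genuine equilibrium measure for the time-dependent walk $(X_t, P_x^a)$; using the SRW equilibrium measure $\bar{e}_U^*$ as a proxy, legitimized by the $a$-independent Green function bounds \eqref{eq:GF_bounds}, is the key conceptual step. The technical verification that $\theta_{H_U} a \in \mathcal{A}$ is immediate from \eqref{eq:space_environments}, so the strong Markov property closes the first-entrance identity; the rest is essentially bookkeeping with \eqref{eq:GF_bounds}.
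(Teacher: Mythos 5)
Your proof is correct, but it takes a genuinely different route from the paper's. The paper splits the two bounds in \eqref{eq:CE1}: for the upper bound it uses the strong Markov property at $H_U$ together with \eqref{eq:GF_bounds} to bound the energy $E^*(\nu_x^a)$ of the (unnormalized) entrance law from below via the variational characterization of capacity \eqref{eq:cap2}, cf.\ \eqref{eq:hit10}--\eqref{eq:hit11}; for the lower bound it runs a separate second-moment (Paley--Zygmund) argument with the functional $W(\nu)$ of \eqref{eq:hit13}--\eqref{eq:hit16} and again optimizes over $\nu$ via \eqref{eq:cap2}. You instead fix once and for all the SRW equilibrium measure $\bar e^*_U$ as a test measure, use the same first-entrance identity (the exact version of the paper's \eqref{eq:hit10}) to write $(G^a\bar e^*_U)(x)=E^a_x[(G^{\theta_{H_U}a}\bar e^*_U)(X_{H_U});H_U<\infty]$, and exploit the exact identity $G^*e^*_U\equiv 1$ on $U$ (last-exit decomposition applied at points of $U$, the same identity the paper uses off $U$ in \eqref{eq:hit1}) so that, after the pointwise comparison $g^{a'}\asymp g^*$ from \eqref{eq:GF_bounds}, both directions of \eqref{eq:CE1} drop out of the single two-sided relation $P^a_x[H_U<\infty]\asymp \textnormal{cap}^*(U)\,(G^*\bar e^*_U)(x)$, with $\sigma^*_x(U)$ appearing only through the bracketing of $(G^*\bar e^*_U)(x)$ and of $P^*_x[H_U<\infty]$ between $\inf_y g^*(x,y)$ and $\sup_y g^*(x,y)$. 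What your approach buys is economy and symmetry: no Paley--Zygmund step, no appeal to the variational principle \eqref{eq:cap2}, and the upper and lower bounds come from one computation; the ingredients you do use (strong Markov property at $H_U$ with the shifted environment $\theta_{H_U}a\in\mathcal{A}$, and the uniform Green function bounds) are exactly those the paper already relies on, so the argument fits the paper's framework without additional input. The paper's second-moment argument is more robust in situations where one only controls the Green function of the inhomogeneous walk on average rather than pointwise, but under \eqref{eq:GF_bounds} your shortcut is perfectly sound.
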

\begin{proof}
First, note that $P_x^*[H_U < \infty]$ admits straightforward upper and lower bounds based on potential theory for simple random walk. Recalling \eqref{eq:equil_meas} and \eqref{eq:cap_def0}, by last-exit decomposition for the simple random walk, see for instance \cite{LL10}, Prop. 4.6.4, one has $P_x^*[H_U < \infty]= \sum_{y \in U}g^*(x,y) e^*_U(y)$, and therefore, in view of \eqref{eq:sigma},
\begin{equation}\label{eq:hit1}
\begin{split}
\sigma_x^*(U)^{-1}  \sup_{y \in U} g^*(x,y) \cdot \text{cap}^*(U) \leq  P_x^*[H_U < \infty] \leq \sigma_x^*(U)  \inf_{y \in U} g^*(x,y) \cdot \text{cap}^*(U).
\end{split}
\end{equation}

We now show \eqref{eq:CE1}, starting with the upper bound. Abbreviating, for $a\in \mathcal{A}$, $y \in U$, $x \notin U$,
\begin{equation}\label{eq:defH}
\nu_x^a(y)= P_x^a[H_U < \infty, X_{H_U}= y] \text{ and } \bar{\nu}^a_x(y) = \nu_x^a(x,y)/ P_x^a[H_U < \infty], 
\end{equation}
and writing, whenever $H_U < \infty$, $(\theta_{H_U}a) \in \mathcal{A}$ for the environment defined via $(\theta_{s}a)_t(e)= a_{s + t}(e)$, for $s,t \geq 0$, $e\in \mathscr{E}$, using the strong Markov property at time $H_U$, we obtain, for all $z \in U$, $x \notin U$, 
\begin{equation}\label{eq:hit10}
\begin{split}
g^a(x,z)&= E_x^a\Big[\int_0^\infty dt 1_{\{ X_t=z \}}\Big]= \sum_{y\in U} E_x^a\Big[ 1_{\{ H_U < \infty,  X_{H_U}= y \}} \cdot E_y^{\theta_{H_U}a}\Big[ \int_0^\infty dt 1_{\{ X_t=z \}}\Big] \Big]\\
&\stackrel{\eqref{eq:GF_bounds}} \geq c_{4} \sum_{y\in U} \nu_x^a(y) g^*(y,z).
\end{split}
\end{equation}
Moreover, noting that, by \eqref{eq:GF_bounds}, $g^a(x,z)\leq c_{5} g^*(x,z) \leq c_{5}\sup_{y \in U} g^*(x,y)$ for all $z \in U$, $x \notin U$, it follows from \eqref{eq:hit10}, multiplying on both sides by $ \nu_x^a(z)$ and summing over $z \in U$, recalling the definition of $E^*(\cdot)$ in \eqref{eq:en1}, that
\begin{equation}\label{eq:hit11}
c_{5} \sup_{y \in U} g^*(x,y)  P_x^a[H_U < \infty] \geq c_{4} E^*(\nu_x^a)= c_{4} E^*(\bar{\nu}_x^a) P_x^a[H_U < \infty]^2,
\end{equation}
and therefore, since $\bar{\nu}_x$ is a probability measure supported on $U$, cf. \eqref{eq:defH},
\begin{equation*}
\begin{split}
P_x^a[H_U < \infty] &\stackrel{\eqref{eq:hit11}}{\leq}\frac{c\sup_{y \in U} g^*(x,y)}{E^*(\bar{\nu}_x^a)} \leq \frac{c \sup_{y \in U} g^*(x,y)}{\inf \{ E^*(\nu); \nu \text{ a prob. meas., } \text{supp}(\nu) \subset U  \}}\\
&\stackrel{\eqref{eq:cap2}}{=} c\sup_{y \in U} g^*(x,y) \cdot \text{cap}^*(U) \stackrel{\eqref{eq:hit1}}{\leq} c\sigma_x^*(U) P_x^*[H_U < \infty].
\end{split}
\end{equation*}
This is the asserted upper bound in \eqref{eq:CE1}. For the lower bound, define the random variable $W(\nu)$, for $\nu$ any probability measure supported on $U$, as
\begin{equation}\label{eq:hit13}
W(\nu)= \sum_{y\in U} \nu(y) \int_0^{\infty} dt \frac{1\{ X_t=y\}}{g^*(x,y)}.
\end{equation}
Clearly, $W(\nu)$ is non-negative and $H_U < \infty$ if $W(\nu)>0$. Hence, by the Paley-Zygmund inequality, for $x \notin U$,
\begin{equation}\label{eq:hit14}
P_x^a[H_U < \infty ] \geq P_x^a[W(\nu)>0] \geq \frac{E_x^a[W(\nu)]^2}{E_x^a[W(\nu)^2]}.
\end{equation}
We consider the first and second moment separately. For the former, one has the lower bound
\begin{equation}\label{eq:hit15}
E_x^a[W(\nu)] = \sum_{y \in U} \nu(y)\frac{g^a(x,y)}{g^*(x,y)} \stackrel{\eqref{eq:GF_bounds}}{\geq} c,
\end{equation}
since $\nu$ is normalized by assumption. To get an upper bound on the second moment, observe that, with $\mathcal{F}_s^X = \sigma(X_t; t \leq s)$,
\begin{equation}\label{eq:hit16}
\begin{split}
 &E_x^a[W(\nu)^2] \stackrel{\eqref{eq:hit13}}{=}   E_x^a\Big[\sum_{y,z\in U} \nu(y) \nu(z) \int_0^{\infty} ds  \int_0^{\infty} dt \frac{ 1\{ X_s=y\} 1\{ X_t=z\}}{g^*(x,y)g^*(x,z)}\Big] \\
 &\qquad \begin{array}{cl}
= &  \hspace{-1ex}  \displaystyle   2\sum_{y,z\in U} \frac{\nu(y) \nu(z)}{g^*(x,y)g^*(x,z)} E_x^a \Big[ \int_0^{\infty} ds 1_{\{X_s=y\}} E_x^a \Big[ \int_s^{\infty} dt 1_{\{ X_t=z\}} \Big|  \mathcal{F}_s^X \Big]\Big]\\
  \underset{\text{Markov}}{\overset{\text{simple}}{\leq}} & \hspace{-1ex}  \displaystyle   2\sum_{y,z\in U} \frac{\nu(y) \nu(z)}{g^*(x,y)g^*(x,z)} \int_0^{\infty} ds E_x^a \Big[  1_{\{X_s=y\}} \sup_{a'\in \mathcal{A}} g^{a'} (y,z) \Big]\\
  \stackrel{\eqref{eq:GF_bounds}}{\leq} &  \hspace{-1ex} \displaystyle   c \sum_{y,z\in U} \frac{\nu(y) \nu(z)}{g^*(x,z)}g^*(y,z) \stackrel{\eqref{eq:en1}}{\leq} \frac{c E^*(\nu)}{\inf_{z\in U}g^*(x,z)}.
\end{array}
\end{split}
\end{equation}
Returning to \eqref{eq:hit14}, using \eqref{eq:hit15}, \eqref{eq:hit16} and optimizing over $\nu$ yields
\begin{equation*}
\begin{array}{rcl}
P_x^a[H_U < \infty ] & \hspace{-1ex}  \geq &  \hspace{-1ex} c' \inf_{z\in U}g^*(x,z) \sup_{\nu} E^*(\nu)^{-1} 
\stackrel{\eqref{eq:cap2}, \eqref{eq:hit1}}{\geq} c' \sigma_x(U)^{-1} P_x^*[H_U < \infty],
\end{array}
\end{equation*}
which is the desired lower bound.
\end{proof}
The following annealed estimate is tailored to our purposes. This brings into play the measure $\mathbf{P}_{x,\varphi}^{\mathcal{G}_{\Lambda},\xi}$ from the Helffer-Sj\"ostrand representation, cf. below \eqref{eq:Dirichletform}. 

 \begin{cor} \label{C:exit_dist}
$(\Lambda, S'\subset \subset \mathbb{Z}^d$, $\Lambda \supset {S}'$, $\Lambda' = \Lambda \setminus S')$

\medskip \noindent
There exist $c_5, c_6$ (independent of $\Lambda$ and $S'$) and $\Lambda_0 (x,S') \subset \subset \mathbb{Z}^d$ with $x \in \Lambda_0 (x,S')$ such that, for all $x \in \mathbb{Z}^d \setminus S'$, $\xi \in \Omega$, $\varphi \in \Omega_{\Lambda'}^\xi$, if $\Lambda \supset \Lambda_0(x,S')$,
\begin{equation}\label{eq:exit_distr}
c_5 \sigma_x^*(S')^{-1} P_x^*[H_{S'}< \infty] \leq \mathbf{P}_{x,\varphi}^{\mathcal{G}_{\Lambda'}, \xi} [
 H_{S'}< H_{\Lambda^c}] \leq c_6 \sigma_x^*(S') P_x^*[H_{S'}< \infty], 
 \end{equation}
\end{cor}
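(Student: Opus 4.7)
The plan is to realize the Helffer--Sj\"ostrand walk under $\mathbf{P}_{x,\varphi}^{\mathcal{G}_{\Lambda'}, \xi}$ as a variable-speed walk associated with a dynamic environment $\tilde{a} \in \mathcal{A}$ on all of $\mathcal{G}$, and then deduce both bounds in \eqref{eq:exit_distr} from Lemma~\ref{L:hit_comparison}. Concretely, the (random) conductances $a_{x',y'}^{\xi}(\Phi_\cdot)$ defined on $\mathscr{E}_{\Lambda'}$ by \eqref{eq:thea} and \eqref{eq:RATES} can be extended to a family $\tilde{a}: \mathbb{R}_+ \times \mathscr{E} \to \mathbb{R}_+$ by setting $\tilde{a}_t(e) = |\Gamma|^{-1}$ for $e \notin \mathscr{E}_{\Lambda'}$; by \eqref{eq:elliptic000}, $\tilde{a} \in \mathcal{A}$. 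Since the dynamics of $X_\cdot$ up to $H_{S'} \wedge H_{\Lambda^c}$ only depends on the conductances on $\mathscr{E}_{\Lambda'}$, the process $X_\cdot$ can be coupled with the walk $X_\cdot^{\tilde a}$ under $P_x^{\tilde a}$ so that the two agree up to that time. Integrating over the Langevin process $\Phi$ then yields
\begin{equation}\label{eq:plan_link}
\mathbf{P}_{x,\varphi}^{\mathcal{G}_{\Lambda'}, \xi}[H_{S'} < H_{\Lambda^c}] = E_\varphi^{\Lambda', \xi} \big[ P_x^{\tilde{a}}[H_{S'} < H_{\Lambda^c}] \big],
\end{equation}
so it suffices to bound the integrand pointwise in $\tilde{a} \in \mathcal{A}$.

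The upper bound then follows immediately from Lemma~\ref{L:hit_comparison} applied to $U = S'$:
$$ P_x^{\tilde{a}}[H_{S'} < H_{\Lambda^c}] \leq P_x^{\tilde{a}}[H_{S'} < \infty] \leq c_3 \sigma_x^*(S') P_x^*[H_{S'} < \infty]. $$
For the lower bound, decompose $P_x^{\tilde{a}}[H_{S'} < \infty] - P_x^{\tilde{a}}[H_{S'} < H_{\Lambda^c}] = P_x^{\tilde{a}}[H_{\Lambda^c} < H_{S'} < \infty]$, apply the strong Markov property of $P_x^{\tilde{a}}$ at time $H_{\Lambda^c}$, and then Lemma~\ref{L:hit_comparison} to the post-exit walk (whose environment $\theta_{H_{\Lambda^c}} \tilde a$ still belongs to $\mathcal{A}$) to get
\begin{equation}\label{eq:plan_err}
P_x^{\tilde{a}}[H_{\Lambda^c} < H_{S'} < \infty] \leq \sup_{y \in \Lambda^c}\sup_{a' \in \mathcal{A}} P_y^{a'}[H_{S'} < \infty] \leq c_3 \sup_{y \in \Lambda^c} \sigma_y^*(S') P_y^*[H_{S'} < \infty].
\end{equation}
Combined with the lower bound from \eqref{eq:CE1}, namely $P_x^{\tilde{a}}[H_{S'} < \infty] \geq (c_3 \sigma_x^*(S'))^{-1} P_x^*[H_{S'} < \infty]$, and \eqref{eq:plan_link}, the desired lower bound in \eqref{eq:exit_distr} (with $c_5 = (2 c_3)^{-1}$) follows as soon as the right-hand side of \eqref{eq:plan_err} is less than half of $(c_3 \sigma_x^*(S'))^{-1} P_x^*[H_{S'} < \infty]$.

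This smallness, which is where the set $\Lambda_0(x, S')$ enters, can be achieved by taking $\Lambda$ sufficiently large: using \eqref{eq:GF_bounds}, the ratio $\sigma_y^*(S')$ is bounded by a constant (depending only on $d$ and $S'$) as soon as $|y - z|$ is much larger than the diameter of $S'$ uniformly in $z \in S'$, while $P_y^*[H_{S'} < \infty] \leq c'\, \text{cap}^*(S') /|y - S'|^{d-2}$ (apply \eqref{eq:hit1} to simple random walk) tends to $0$ uniformly as $|y| \to \infty$ by transience in $d\ge 3$. Hence one may define $\Lambda_0(x, S')$ to be any finite set containing $\{x\} \cup S'$ together with a sufficiently large surrounding ball. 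The one potential obstacle is ensuring that $\Lambda_0$ can be chosen uniformly in $(\varphi, \xi)$; this is automatic, however, because Lemma~\ref{L:hit_comparison} is a quenched statement uniform over $\mathcal{A}$, and because the extended environment $\tilde{a}$ belongs to $\mathcal{A}$ regardless of $(\varphi, \xi)$.
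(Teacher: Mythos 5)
Your proposal is correct and follows essentially the same route as the paper: disintegrate over the Langevin environment, extend the conductances outside $\mathscr{E}_{\Lambda'}$ to obtain an element of $\mathcal{A}$, use Lemma~\ref{L:hit_comparison} for the upper bound, and for the lower bound control $P_x^{\tilde a}[H_{\Lambda^c}<H_{S'}<\infty]$ by the strong Markov property plus \eqref{eq:CE1} at points far from $S'$, absorbing it into half the lower bound once $\Lambda\supset\Lambda_0(x,S')$. The only (inessential) differences from the paper's proof are that you extend the environment at the outset rather than at the end, and take the supremum over $\Lambda^c$ rather than over $\partial\Lambda$.
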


\begin{proof}
By the discussion leading to \eqref{eq:RATES}, and recalling the definition of $P_{\varphi}^{\Lambda',\xi}$ below \eqref{eq:Langevin}, we see that for bounded, measurable $f$ on $D(\mathbb{R}_+, \overline{\Lambda})$, 
\begin{equation}\label{eq:disint1}
\mathbf{E}_{x,\varphi}^{\mathcal{G}_{\Lambda'}, \xi} [f((X_t)_{ t\geq 0})] = \int dP_{\varphi}^{\Lambda',\xi}(\Phi)\, E_x^{a_{\Lambda'}(\Phi)} [f((X_t)_{ t\geq 0})],
\end{equation}
with $a_{\Lambda'}(\Phi) $ as in \eqref{eq:RATES}, and $P_x^{a_{\Lambda'}(\Phi)}$ is the law of the random walk on $\mathcal{G}_{\Lambda'}$ among the dynamic conductances $a_{\Lambda'}(\Phi) $, killed when first exiting $\Lambda'$. 
We will apply Lemma \ref{L:hit_comparison} directly to $P_x^{a_{\Lambda'}(\Phi)}$.

For $ a\in \mathcal{A}$, cf. \eqref{eq:space_environments}, denote by $P^{{\Lambda}, a}_x$, $x \in \Lambda$, the law of the variable-speed random walk on $\mathcal{G}$ killed when first exiting $\Lambda$. Then, since 
\begin{equation}\label{eq:hit-40}
P^{{\Lambda}, a}_x[H_{S'}< \infty] = P^{ a}_x[H_{S'}< \tau_{\Lambda}]  = P^{ a}_x[H_{S'}< \infty]- P^{ a}_x[ \tau_{\Lambda}\leq H_{S'} < \infty]  
\end{equation}
for all $x \in \Lambda\setminus S'$, and, by the strong Markov property at time $H_{S'} \wedge \tau_{\Lambda} (= \tau_{\Lambda'})$, 
\begin{equation}\label{eq:hit-41}
\begin{split}
&P^{ a}_x[ \tau_{\Lambda}\leq H_{S'} < \infty]
= P_x^a[ X_{\tau_{\Lambda'}} \in \partial \Lambda, H_{S'} \circ \tau_{\Lambda'} < \infty ] \\&= \sum_{y \in \partial \Lambda} E_x^a\Big[1_{\{  X_{\tau_{\Lambda'}} =y\}} P_y^{\theta_{\tau_{\Lambda'}} a}[H_{S'}< \infty] \Big]
\leq \sup_{y\in \partial \Lambda, a\in \mathcal{A}} P_y^{a}[H_{S'}< \infty] \stackrel{\eqref{eq:CE1}}{\leq} c(S') d_{\mathcal{G}}( \partial \Lambda, S')^{2-d},
\end{split}
\end{equation}
for all $\Lambda \supset \Lambda_1(S')$ (using in the last step that $\sup_{x\in \partial \Lambda} \sigma_x^*(S')< 10$ for suitable $\Lambda_1(S')$, and all $\Lambda  \supset \Lambda_1(S')$), we deduce from \eqref{eq:hit-40} and \eqref{eq:hit-41}, that for all $x \notin S$, all $\Lambda \supset \Lambda_2(x,S')$ and all $a\in \mathcal{A}$, $\frac12 \leq \frac{P^{{\Lambda}, a}_x[H_{S'}< \infty]}{P^{ a}_x[H_{S'}< \infty]} \leq 1 $. From this, it follows that \eqref{eq:CE1} has the following finite-volume version. Namely, for all $a \in \mathcal{A}$, $x \notin S'$, $\Lambda \supset  \Lambda_2(x,S')$,
\begin{equation}\label{eq:hit-42}
(2c_3 \sigma_x^*(S'))^{-1} P_x^{*}[H_{S'} < \infty] \le P_x^{\Lambda,a}[H_{S'} < \infty] \leq c_3 \sigma_x^*(S') P_x^{*}[H_{S'} < \infty].
\end{equation}
The claim \eqref{eq:exit_distr} now follows by extending $a_{\Lambda'}(\Phi)$ to an environment $a(\Phi)$ on $\mathscr{E}$ by declaring each edge in $\mathscr{E}\setminus \mathscr{E}_{\Lambda'}$ to have, say, conductance $1$ for all $t \geq 0$, thus obtaining that $a(\Phi) \in \mathcal{A}$, due to \eqref{eq:thea} and \eqref{eq:elliptic}, so \eqref{eq:hit-42} yields $P_{\varphi}^{\Lambda',\xi}(d\Phi)$-a.s. bounds on $P_x^{\Lambda,a(\Phi)}[H_{S'} < \infty] = P_x^{a_{\Lambda'}(\Phi)}[ H_{S'}< H_{\Lambda^c} ]$, for $x \notin S$ and $\Lambda \supset  \Lambda_2(x,S')$, and \eqref{eq:exit_distr} follows upon integrating over $\Phi$, using \eqref{eq:disint1}.
\end{proof}

We will also need the following comparison inequality for hitting \textit{distributions}. In what follows we write
\begin{equation}\label{eq:hit_dist0}
\nu_x^{S'}(y)= P_x^*[H_{S'}< \infty, X_{H_{S'}}=y], \text{ for } S'\subset \mathbb{Z}^d, \, x,y \in \mathbb{Z}^d.
\end{equation}
\begin{lem}\label{L:hit_dist1}
$(\Lambda, S'\subset \subset \mathbb{Z}^d$, $\Lambda \supset {S}'$, $\Lambda' = \Lambda \setminus S')$

\medskip \noindent
For all $x \in \Lambda'$, $\xi \in \Omega$, $\varphi \in \Omega_{\Lambda'}^\xi$, and $f: S' \to [0,\infty)$,
\begin{equation}\label{eq:hit_dist1}
\mathbf{E}_{x,\varphi}^{\mathcal{G}_{\Lambda'},\xi}[f(X_{\tau_{\Lambda'}})1\{X_{\tau_{\Lambda'}} \in S'\}] \leq c_{7} \langle G^*\nu_x^{S'}, f \rangle_{\ell^2(S')},
\end{equation}
where $ (G^*\nu_x^{S'})(y)= \sum_z g^*(y,z)\nu_x^{S'}(z)$, cf. below \eqref{eq:en1}.
\end{lem}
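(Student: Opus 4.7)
The plan is to reduce \eqref{eq:hit_dist1} to a uniform pointwise bound on the hitting distribution of the time-varying walk, and then to identify the resulting upper bound with $\langle G^*\nu_x^{S'}, f\rangle_{\ell^2(S')}$ via a short potential-theoretic computation. The three main ingredients will be the disintegration \eqref{eq:disint1}, the comparison estimate in Lemma~\ref{L:hit_comparison} specialized to singletons, and an elementary identity for $G^*\nu_x^{S'}$ on $S'$.

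First, I would disintegrate using \eqref{eq:disint1} and extend $a_{\Lambda'}(\Phi)$ to an environment in $\mathcal{A}$ on all of $\mathscr{E}$ (setting, say, $a_t(e)\equiv 1$ for $e\in\mathscr{E}\setminus\mathscr{E}_{\Lambda'}$), exactly as in the proof of Corollary~\ref{C:exit_dist}. Since $\tau_{\Lambda'}= H_{\Lambda^c\cup S'}$, on the event $\{X_{\tau_{\Lambda'}}\in S'\}$ one has $\tau_{\Lambda'}= H_{S'}$, and dropping the constraint $H_{S'}\leq\tau_\Lambda$ controls the left-hand side of \eqref{eq:hit_dist1} by
\begin{equation*}
\sup_{a\in\mathcal{A}}\sum_{y\in S'}f(y)\,P_x^a[X_{H_{S'}}=y,\,H_{S'}<\infty].
\end{equation*}

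For the second step I would establish the pointwise bound $P_x^a[X_{H_{S'}}=y,\,H_{S'}<\infty]\leq c\,g^*(x,y)$, uniformly in $a\in\mathcal{A}$ and in $y\in S'$. Indeed, on the event in question one has $H_y=H_{S'}<\infty$ (a visit to $y$ at time $H_{S'}$ gives $H_y\leq H_{S'}$, while $y\in S'$ forces $H_{S'}\leq H_y$), so the probability is dominated by $P_x^a[H_y<\infty]$. Applying Lemma~\ref{L:hit_comparison} to the singleton $U=\{y\}$, for which $\sigma_x^*(U)=1$ by \eqref{eq:sigma}, yields $P_x^a[H_y<\infty]\leq c_3 P_x^*[H_y<\infty]$, and the standard identity $P_x^*[H_y<\infty]=g^*(x,y)/g^*(y,y)$ combined with the uniform lower bound $g^*(y,y)=g^*(0,0)\geq c>0$ concludes the pointwise estimate.

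It remains to identify $\sum_{y\in S'}g^*(x,y)f(y)$ with $\langle G^*\nu_x^{S'}, f\rangle_{\ell^2(S')}$, which comes down to the equality $G^*\nu_x^{S'}(y)=g^*(x,y)$ for all $y\in S'$. By symmetry of $g^*$, the definition \eqref{eq:GFstar} and the strong Markov property of simple random walk at $H_{S'}$,
\begin{equation*}
G^*\nu_x^{S'}(y) = E_x^*\Big[g^*(y,Z_{H_{S'}})\,1\{H_{S'}<\infty\}\Big] = E_x^*\Big[\int_{H_{S'}}^{\infty}1\{Z_s=y\}\,ds\cdot 1\{H_{S'}<\infty\}\Big],
\end{equation*}
and for $y\in S'$ every visit of $Z$ to $y$ automatically satisfies $s\geq H_{S'}$ and $H_{S'}<\infty$, so the last expectation collapses to $g^*(x,y)$. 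Combining the three steps yields \eqref{eq:hit_dist1}. I expect no serious obstacle: the mildly delicate point is the bookkeeping in the first step (extending $a_{\Lambda'}(\Phi)$ to $\mathcal{A}$ and removing the killing at $\partial\Lambda$), while the core of the argument is really just the comparison in Lemma~\ref{L:hit_comparison} applied to singletons.
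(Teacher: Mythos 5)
Your proposal is correct and takes essentially the same route as the paper: disintegrate over the environment via \eqref{eq:disint1}, reduce to a pointwise bound on the quenched hitting probability, and apply Lemma~\ref{L:hit_comparison} with the singleton $U=\{y\}$ (where $\sigma_x^*(\{y\})=1$) to compare with simple random walk. The only difference is bookkeeping in the final step: the paper bounds $P_x^*[H_y<\infty]$ above by $(G^*\nu_x^{S'})(y)$ through a first-entrance decomposition at $H_{S'}$ combined with the last-exit bound $P_z^*[H_y<\infty]\leq g^*(z,y)$, whereas you prove the exact identity $(G^*\nu_x^{S'})(y)=g^*(x,y)$ for $y\in S'$ (valid since $x\in\Lambda'$ implies $x\notin S'$) and use $P_x^*[H_y<\infty]\leq g^*(x,y)/g^*(y,y)$ --- the same strong Markov facts arranged in the opposite order, yielding the same conclusion.
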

\begin{proof}
Recalling the quenched law $P_x^{a_{\Lambda'}(\Phi)}$ defined in \eqref{eq:disint1}, and using Lemma \ref{L:hit_comparison} with $U=\{ y\}$ (noting that $\sigma_x^*(\{ y\})=1$, cf. \eqref{eq:sigma}), we have, for $y\in S'$, $x \in \Lambda'$, $P_{\varphi}^{\Lambda',\xi}(d\Phi)$-a.s.,
\begin{equation}\label{eq:hit_dist2}
\begin{split}
&P_x^{a_{\Lambda'}(\Phi)}[X_{\tau_{\Lambda'}} = y ]
=P_x^{a_{\Lambda'}(\Phi)}[H_y < \infty ] \leq \sup_{a \in \mathcal{A}}P_x^{a}[H_y < \infty ] \stackrel{\eqref{eq:CE1}}{\leq} c_{3} P_x^{*}[H_y < \infty ]\\
&\quad = c_{3}(\nu_x^{S'}(y) + P_x^*[H_{S'}< H_y < \infty]) \underset{\text{Markov}}{\stackrel{\text{strong}}{=}} c_{3}\Big(\nu_x^{S'}(y) + \sum_{z\neq y}\nu_x^{S'}(z)P_z^*[H_y< \infty]\Big)\\
&\quad \leq c\sum_{z \in S'}  \nu_x^{S'}(z) g^*(z,y)= (G^*\nu_x^{S'})(y),
\end{split}
\end{equation}
where we used in the last line that $g^*(y,y) > 1$ and 
$$
P_z^*[H_y < \infty]=g^*(z,y)P_y[\widetilde{H}_y=\infty]\leq g^*(z,y),
$$
by last exit decomposition in $y$. From \eqref{eq:hit_dist2}, and since $f\geq 0$ by assumption, we deduce that $E_x^{a_{\Lambda'}(\Phi)}[f(X_{\tau_{\Lambda'}})1\{X_{\tau_{\Lambda'}} \in S'\}]$ is bounded by the right-hand side of \eqref{eq:hit_dist1}, and the claim follows by averaging over $\Phi$, see \eqref{eq:disint1}.
\end{proof}

In the next lemma, we derive a suitable (i.e. sufficiently sharp, for the purposes we have in mind, cf. Lemma \ref{L:renorm1} and Corollary \ref{C:DI1} in the next section) upper bound for the probability of the `bad' event $G_{\Lambda,M}^c$ defined in \eqref{eq:dec15}. The proof relies on a careful application of the exponential Brascamp-Lieb inequality, see Lemma \ref{L:BL}, together with the previously derived comparison estimates, Corollary \ref{C:exit_dist} and Lemma \ref{L:hit_dist1}.

In what follows let $(\Lambda_n)_{n\geq 0}$ be any increasing sequence of finite sets exhausting $\mathbb{Z}^d$, and recall that under $Q_{\Lambda}^0$, the field $\omega=(\varphi,\widetilde{\varphi})$ is distributed as two independent copies of $\varphi$ under $\mu_{\Lambda}^0$. We stress that the constants $c_2, c_3$ appearing below are independent of the choice of $(\Lambda_n)_{n\geq 0}$.

\begin{lem}\label{L:error} $(\emptyset \neq S,S' \subset \subset \mathbb{Z}^d, S\cap S' = \emptyset)$

\medskip
\noindent For any sequence $(M_n)_{n \geq 0}$, with $M_n > 0$ for all $n$, denoting $M_{\infty} = \limsup_n M_n$,
and letting
\begin{equation}\label{eq:sigma_global}
\sigma^*\equiv \sigma^*(S,S')= \sup_{x\in S}\sigma_x^*(S'), \quad (\text{cf. \eqref{eq:sigma} for the definition of $\sigma_x^*(S'))$},
\end{equation}
one has
\begin{equation}\label{eq:dec649}
\liminf_{n \to \infty}  Q_{\Lambda_n}^0[G_{\Lambda_n, M_n}^c] \leq c_1|S|2^{| \partial_{\text{i}} S'|}\exp \Big\{-c_2\Big(\frac{\textnormal{cap}^*(S') M_{\infty}}{(\sigma^*)^2 |\partial_{\textnormal{i}}S'| }\Big)^2\Big\}.
\end{equation}
\end{lem}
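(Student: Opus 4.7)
I would reduce $G_{\Lambda_n, M_n}^c$ to a collection of linear deviation events for the field $\varphi - \widetilde{\varphi}$ restricted to $\partial_{\textnormal{i}} S'$ under $Q_{\Lambda_n}^0$, then apply the Brascamp--Lieb inequality (Lemma~\ref{L:BL}).

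\emph{Step 1 (Uniform linearization of the conditional expectation).} Fix $x \in S$ and $(t, \varphi') \in (\mathbb{Q}\cap[0,1]) \times \mathbb{Q}^{\Lambda_n'}$. Writing the conditional expectation as the ratio $\mathbf{E}[(\varphi-\widetilde{\varphi})_{X_{\tau_{\Lambda'_n}}}\mathbf{1}_{X_{\tau_{\Lambda'_n}} \in \partial_{\textnormal{i}} S'}]/\mathbf{P}[X_{\tau_{\Lambda'_n}} \in \partial_{\textnormal{i}} S']$, bound the numerator from above by replacing $(\varphi-\widetilde{\varphi})$ with its positive part and invoking Lemma~\ref{L:hit_dist1}, and bound the denominator from below using Corollary~\ref{C:exit_dist} (this requires $\Lambda_n \supset \Lambda_0(x,S')$, which holds for all $n$ large enough as $x$ ranges over the finite set $S$). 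This yields, uniformly in $(t, \varphi')$ and in the boundary data $\xi = \xi^\omega(t)$,
\[
\mathbf{E}_{x, \varphi'}^{\mathcal{G}_{\Lambda'_n}, \xi^\omega(t)}\bigl[\varphi_{X_{\tau_{\Lambda'_n}}} - \widetilde{\varphi}_{X_{\tau_{\Lambda'_n}}} \,\big|\, X_{\tau_{\Lambda'_n}} \in \partial_{\textnormal{i}} S'\bigr] \leq \frac{c_7\,\sigma_x^*(S')}{c_5\,P_x^*[H_{S'} < \infty]}\sum_{y \in \partial_{\textnormal{i}} S'}(G^*\nu_x^{S'})(y)\,(\varphi_y - \widetilde{\varphi}_y)^+.
\]
The right-hand side depends on $\omega = (\varphi, \widetilde{\varphi})$ but not on $(t, \varphi')$, so the countable union over $(t, \varphi', x)$ in \eqref{eq:dec15} collapses.

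\emph{Step 2 (Sign decomposition, yielding the factor $2^{|\partial_{\textnormal{i}} S'|}$).} On $G_{\Lambda_n, M_n}^c$, some $x \in S$ satisfies the inequality above with the right-hand side exceeding $M_n$. Writing $(\varphi_y - \widetilde{\varphi}_y)^+ = (\varphi_y - \widetilde{\varphi}_y)\mathbf{1}[\varphi_y > \widetilde{\varphi}_y]$ and conditioning on the (random) set $A := \{y \in \partial_{\textnormal{i}} S' : \varphi_y > \widetilde{\varphi}_y\}$, a union bound over the $2^{|\partial_{\textnormal{i}} S'|}$ possible values of $A$ reduces the task to controlling, for each fixed $(x, A)$,
\[
Q^0_{\Lambda_n}\Bigl[\sum_{y \in A}(G^*\nu_x^{S'})(y)(\varphi_y - \widetilde{\varphi}_y) > M_n \cdot \tfrac{c_5\,P_x^*[H_{S'}<\infty]}{c_7\,\sigma_x^*(S')}\Bigr].
\]

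\emph{Step 3 (Brascamp--Lieb tail bound).} Since $\varphi$ and $\widetilde{\varphi}$ are independent centered copies under $Q^0_{\Lambda_n}$, Lemma~\ref{L:BL} applied separately to each factor gives
\[
E_{Q^0_{\Lambda_n}}\bigl[e^{\lambda L}\bigr] \leq \exp\bigl(c_0 \lambda^2 V_A\bigr), \qquad V_A := \sum_{y,z \in A}(G^*\nu_x^{S'})(y)(G^*\nu_x^{S'})(z)\,g^*_{\Lambda_n}(y,z),
\]
where $L = \sum_{y \in A}(G^*\nu_x^{S'})(y)(\varphi_y - \widetilde{\varphi}_y)$. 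Optimizing in $\lambda$ in the exponential Chebyshev inequality gives a bound of the form $\exp(-c\,(M_n^x)^2/V_A)$ with $M_n^x$ as in Step~2.

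\emph{Step 4 (Variance estimate via potential theory).} Using $g^*_{\Lambda_n}(y,z) \leq g^*(y,z)$ together with the bounds \eqref{eq:GF_bounds}, the identity $P_x^*[H_{S'}<\infty] = \sum_{y \in S'}g^*(x,y)e_{S'}^*(y)$, the variational characterization \eqref{eq:cap2}--\eqref{eq:cap3} of $\mathrm{cap}^*(S')$, and the definition of $\sigma^*(S,S')$ in \eqref{eq:sigma_global}, one derives
\[
V_A \leq c\,\bigl(P_x^*[H_{S'}<\infty]\bigr)^2\,|\partial_{\textnormal{i}} S'|^2 \cdot \frac{\sigma^*(S,S')^2}{\mathrm{cap}^*(S')^2},
\]
so that the exponent in Step~3 is at least $c_2\bigl(\mathrm{cap}^*(S')\,M_n/(\sigma^*(S,S')^2\,|\partial_{\textnormal{i}} S'|)\bigr)^2$.

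\emph{Step 5 (Union bound and passage to the limit).} Summing over $x \in S$ (factor $|S|$) and $A \subset \partial_{\textnormal{i}} S'$ (factor $2^{|\partial_{\textnormal{i}} S'|}$), and taking $\liminf_{n}$ (which replaces $M_n$ by $M_\infty$ in the exponent), yields \eqref{eq:dec649}.

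\emph{Main obstacle.} The delicate point is the sharp variance estimate in Step~4: one must pair the upper bound $(G^*\nu_x^{S'})(y) \lesssim \sup_{z \in S'}g^*(y,z)\,P_x^*[H_{S'}<\infty]$ with the lower bound $P_x^*[H_{S'}<\infty] \gtrsim \sigma_x^*(S')^{-1}\sup_{z \in S'}g^*(x,z)\,\mathrm{cap}^*(S')$, exploiting \eqref{eq:cap2}, to extract the factor $\mathrm{cap}^*(S')^{-2}$ from $V_A$ that ultimately produces the capacity term in the exponent of \eqref{eq:dec649}. Everything else---the factors $|S|$, $2^{|\partial_{\textnormal{i}} S'|}$, and the uniform ellipticity required for Corollary~\ref{C:exit_dist} and Lemma~\ref{L:BL}---follows routinely from the machinery assembled in Sections~\ref{S:1} and the earlier part of~\ref{S:error}.
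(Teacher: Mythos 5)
Your proposal is correct and follows essentially the same route as the paper: linearize the conditional expectation via Corollary~\ref{C:exit_dist} and Lemma~\ref{L:hit_dist1}, decompose over sign patterns on $\partial_{\textnormal{i}}S'$ to get the factor $2^{|\partial_{\textnormal{i}}S'|}$, apply the exponential Brascamp--Lieb bound with a Chernoff argument, and control the Gaussian variance by pairing $(G^*\nu_x^{S'})(y)\leq g^*(x,y)$ with $g^*(x,\cdot)/P_x^*[H_{S'}<\infty]\leq \sigma_x^*(S')/\textnormal{cap}^*(S')$. The only (immaterial) difference is that you keep $\varphi-\widetilde\varphi$ together and condition on the sign of the difference, whereas the paper first splits into separate deviation events for $\varphi$ and $\widetilde\varphi$ at level $M/2$ using the symmetry of $\widetilde\varphi$.
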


\begin{proof}

Let $\Lambda \supset S \cup S'$ be a finite subset of $\mathbb{Z}^d$. We will later take $\Lambda = \Lambda_n$ and let $n \to \infty$. First, in view of \eqref{eq:dec15}, note that, for $M > 0$,
$$
G_{\Lambda,M}^c \subset \bigcup_{t, (x,\varphi')} \bigg( \Big\{  \mathbf{E}_{x, \varphi'}^{\mathcal{G}_{\Lambda'}, \xi(t)}[{\varphi}_{X_{\tau_{\Lambda'}}}  |X_{\tau_{\Lambda'}} \in \partial_{\text{i}} S']  > \frac M2    \Big\} \cup  \Big\{  \mathbf{E}_{x, \varphi'}^{\mathcal{G}_{\Lambda'}, \xi(t)}[\widetilde{\varphi}_{X_{\tau_{\Lambda'}}}  |X_{\tau_{\Lambda'}} \in \partial_{\text{i}} S']  <- \frac M2    \Big\} \bigg).
$$
We first dispense with the $\omega =(\varphi,\widetilde{\varphi})$-dependence coming from the boundary condition $\xi(t)=\xi^{\omega}(t)$, cf. \eqref{eq:dynamic_bc}. Using the lower bound from \eqref{eq:exit_distr}, along with \eqref{eq:hit_dist1}, it follows that, whenever $\Lambda \supset {\Lambda}_3(S,S') = \bigcup_{x \in S} \Lambda_{0}(x,S')$, a finite subset of $\mathbb{Z}^d$, and for all $f: S' \to \mathbb{R}$, with $f^+=f\vee0$, $\ell^2 = \ell^2(\partial_{\text{i}}S')$, noting that $\mathbf{P}_{x, \varphi'}^{\mathcal{G}_{\Lambda'}, \xi(t)}[X_{\tau_{\Lambda'}} \in \partial_{\text{i}} S']= \mathbf{P}_{x, \varphi'}^{\mathcal{G}_{\Lambda'}, \xi(t)}[H_{S'}< H_{\Lambda^c}]$,
\begin{equation}\label{eq:dec650}
\mathbf{E}_{x, \varphi'}^{\mathcal{G}_{\Lambda'}, \xi(t)}[{f}_{X_{\tau_{\Lambda'}}}  |X_{\tau_{\Lambda'}} \in \partial_{\text{i}} S'] \leq \frac{c_{7} \sigma_x^*(S') \langle G^*\nu_x^{S'}, f^+ \rangle_{\ell^2}}{c_5P_x^*[H_{S'}< \infty] }   \leq c_{8} \sigma^*  \langle G^* \bar{\nu}_x^{S'}, f^{+}  \rangle_{\ell^2},
\end{equation}
where $\bar{\nu}_x^{S'}(\cdot)= {\nu}_x^{S'}(\cdot)/ P_x^*[H_{S'}< \infty] $ is the normalized entrance distribution, a probability measure for every $x$, cf. \eqref{eq:hit_dist0}. Note that \eqref{eq:dec650} is uniform in $\xi(t)$ (and hence in $t$) and $\varphi'$. Substituting \eqref{eq:dec650} for $f=\varphi$, along with the lower bound $\mathbf{E}_{x, \varphi'}^{\mathcal{G}_{\Lambda'}, \xi(t)}[{f}_{X_{\tau_{\Lambda'}}}  |X_{\tau_{\Lambda'}} \in \partial_{\text{i}} S'] \geq -c_{8} \sigma^*  \langle G^* \bar{\nu}_x^{S'}, f^{-}  \rangle_{\ell^2}$, obtained similarly, into the previous display, and using that $\widetilde{\varphi}$ has the same law as $-\widetilde{\varphi}$ under $Q_{\Lambda}^0$, which is due to the symmetry of $V_X$, cf.  \eqref{eq:sym}, and the fact that $Q_{\Lambda}^0$ imposes $0$-boundary condition, one arrives at
\begin{equation}\label{eq:dec652.0}
Q_{\Lambda}^0[G_{\Lambda,M}^c] \leq 2 |S| \sup_{x \in S} \mu_{\Lambda}^0 \big(\langle G^* \bar{\nu}_x^{S'}, \varphi^{+}  \rangle_{\ell^2} > c_{9} M/\sigma^*\big), \text{ for $\Lambda \supset {\Lambda}_3(S,S')$ },
\end{equation}
where $c_{9} = 1/2c_{8}$. Consider a fixed $x \in S$. By letting $K$ range over all possible outcomes of the set $\{ y \in \partial_{\text{i}} S' ; \, \varphi_y \geq 0\}$ and writing $(G^*\bar{\nu}_x^{S'})_{|_K}(\cdot)= 1_{\{ \cdot \in K \}}G^*\bar{\nu}_x^{S'}(\cdot)$,  one readily obtains, for $x \in S $ and $\lambda>0$, 
\begin{equation}\label{eq:dec652}
\mu_{\Lambda}^0 \big(\langle G^*\bar{\nu}_x^{S'}, \varphi^{+}  \rangle_{\ell^2} > c_{9} M/\sigma^*\big) \leq 2^{| \partial_{\text{i}} S'|} e^{-\lambda c_{9}\frac{M}{\sigma^*}} \sup_{K \subset \partial_{\text{i}} S'} \mathbb{E}_{\mu_{\Lambda}^0}\Big[ e^{\lambda  \langle (G^*\bar{\nu}_x^{S'})_{|_K}, \varphi  \rangle_{\ell^2}}\Big].
\end{equation}
The Brascamp-Lieb bound \eqref{eq:BL} (note that $\varphi$ is centered) then yields, for $x \in S$ and $K \subset \partial_{\text{i}} S'$, 
\begin{equation}\label{eq:dec653}
\mathbb{E}_{\mu_{\Lambda}^0}\Big[ e^{\lambda  \langle(G^*\bar{\nu}_x^{S'})_{|_K}, \varphi  \rangle_{\ell^2}}\Big]  \leq e^{\frac12 \lambda^2 \text{var}^{*}_{\Lambda}(\psi_K(x))}, \text{ with } \psi_K(x) = \langle (G^*\bar{\nu}_x^{S'})_{|_K}, \varphi  \rangle_{\ell^2},
\end{equation}
where $\text{var}^{*}_{\Lambda}$ denotes variance with respect to the law $\mu^{*}_{\Lambda}$ under which $\varphi$ is distributed as a centered Gaussian field with covariances $\mathbb{E}_{\mu^{*}_{\Lambda}}[\varphi_x\varphi_y]= c_{10} g^*_{\Lambda}(x,y)$, for all $x,y$, and $g^*_{\Lambda}$ is the Green function of simple random walk on $\mathcal{G}$ killed outside $\Lambda$. To bound $\text{var}^{*}_{\Lambda}(\psi_K(x))$, we will harness the fact that, for all $x,z\in \mathbb{Z}^d$, 
$$
\sum_y {\nu}_x^{S'}(y) g^*(y,z) \stackrel{\eqref{eq:hit_dist0}}{=} E_x^*[g^*(X_{H_{S'}},z)1\{ H_{S'}< \infty\}] = g^*(x,z)-g^*_{\mathbb{Z}^d\setminus S'}(x,z) \leq g^*(x,z),
$$
where the penultimate step follows from the strong Markov property at time $H_{S'}$. Since $\psi_K(x)$ is a centered Gaussian under $\mu^{*}_{\Lambda}$, we obtain, for all $x \in S$, $K \subset \partial_{\text{i}}S'$, and $\Lambda \supset \Lambda_3(S,S')$,
\begin{equation}\label{eq:varbound00}
\begin{split}
\text{var}^{*}_{\Lambda}(\psi_K(x))&= \mathbb{E}_{\mu^{*}_{\Lambda}}[\psi_K(x)^2] = c_{10}\sum_{y,z\in K} (G^*\bar{\nu}_x^{S'})(y) (G^*\bar{\nu}_x^{S'})(z) g^*_{\Lambda}(y,z)\\
&\leq c_{10} \sum_{y,z,v,w\in \partial_{\text{i}}S'} g^*(y,v)\bar{\nu}_x^{S'}(v) g^*(z,w)\bar{\nu}_x^{S'}(w) g^*_{\Lambda}(y,z)\\
&\leq  c_{10} \sum_{y,z\in \partial_{\text{i}}S'}\frac{g^*(x,y)g^*(x,z)}{P_x^*[H_{S'}< \infty]^2}g_{\Lambda}^*(y,z)  
\leq c_{11} \Big[\frac{\sigma^* |\partial_{\text{i}}S'|}{\text{cap}^*(S')}\Big]^2, 
\end{split}
\end{equation}
using in the last step that $g_{\Lambda}^*(\cdot,\cdot) \nearrow g^*(\cdot,\cdot) \leq c'$, as $\Lambda \nearrow \mathbb{Z}^d$, together with the fact that, for all $x \in S$, cf. above \eqref{eq:hit1}, 
\begin{equation*}
\sup_{y \in  \partial_{\text{i}}S'} \frac{g^*(x,y)}{P_x^*[H_{S'}< \infty]} = \sup_{y \in  \partial_{\text{i}}S'} \frac{g^*(x,y)}{\sum_{z \in  S'} g^*(x,z)e^*_{S'}(z)} \leq \frac{\sigma_x^*(S')}{\text{cap}^*(S')}.
\end{equation*}
Substituting \eqref{eq:varbound00} into \eqref{eq:dec652}, \eqref{eq:dec653}, optimizing over $\lambda$, we get, for all $x \in S$, any $(\Lambda_n)_{n\geq 0}$ as appearing in the lemma, and any sequence $M_n > 0$, for all $n$, letting $n \to \infty$,
$$
\liminf_n \mu_{{\Lambda_n}}^0 \big(\langle G^*\bar{\nu}_x^{S'}, \varphi^{+}  \rangle_{\ell^2} > c_{9} M_n/\sigma^*\big) \leq 2^{| \partial_{\text{i}} S'|} e^{-c_{12} (\frac{\text{cap}^*(S') M_{\infty}}{(\sigma^*)^2 |\partial_{\text{i}}S'| })^2},
$$
with $c_{12}=c_{9}^2/2c_{11}$. In view of \eqref{eq:dec652.0}, 
this yields \eqref{eq:dec649}.
\end{proof}

With Proposition \ref{L:monot} and Lemma \ref{L:error} at hand, we proceed to the proof of the decoupling inequality, Theorem \ref{P:dec_ineq}. 

\bigskip
\noindent \textit{Proof of Theorem \ref{P:dec_ineq}.}
Let $\Lambda \subset \subset \mathbb{Z}^d$ satisfy the conditions in \eqref{eq:dec:A_sets}. Applying Proposition \nolinebreak \ref{L:monot} and minding that $f  \geq 0$ is measurable with respect to $\mathcal{F}_{S'} = \sigma(\varphi_x, x\in S')$, we obtain, for all $M\leq M_{\varepsilon, \Sigma_{\Lambda}}$,
\begin{equation} \label{eq:dec9}
\begin{split}
&\mathbb{E}_{\mu_{\Lambda}^0}[1_{A^h}\cdot f] = \mathbb{E}_{\mu_{\Lambda}^0}\Big[ \mathbb{E}_{\mu_{\Lambda}^0}[1_{A^h}  |  \mathcal{F}_{S'}]\cdot f\Big]  \stackrel{\eqref{eq:dec10.1}}{=}\mathbb{E}_{Q_{\Lambda}^0}[ Z^h(\varphi_{S'}) \cdot f(\varphi)] \\
& \stackrel{\eqref{eq:sprinkling}}{\leq} \mathbb{E}_{Q_{\Lambda}^0}[ Z^{h-\varepsilon}(\widetilde{\varphi}_{S'})  \, 1_{G_{\Lambda, M}} \cdot f(\varphi) ] + \mathbb{E}_{Q_{\Lambda}^0}[ f(\varphi) \cdot 1_{G_{\Lambda, M}^c}] \\
&\stackrel{\eqref{eq:dec10.1}}{\leq} \mu_{\Lambda}^0(A^{h-\varepsilon} )\mathbb{E}_{ \mu_{\Lambda}^0}[ f] + \Vert f \Vert_{L^{\infty}(\Omega)} \cdot  Q_{\Lambda}^0[G_{\Lambda, M}^c],
\end{split}
\end{equation}
using independence of $\varphi$ and $\widetilde{\varphi}$ in the last step. Hence, \eqref{eq:dec_ineq} follows at once with $\Lambda=\Lambda_n$, $M=M_n = M_{\varepsilon, \Sigma_{\Lambda_n}}$, cf. \eqref{eq:decM}, upon letting $n \to \infty$ in \eqref{eq:dec9} and using \eqref{eq:dec649}, noting that $M_{\infty}= \limsup_n M_n = \varepsilon (\Sigma^{-1} +1) $, on account of \eqref{eq:decM} and \eqref{eq:def_sigma_inf}.
\hfill $\square$

\newpage

\begin{remark} Although this will not be needed below, let us point out that one can derive a  better bound in Lemma \ref{L:error}, by estimating the variance in \eqref{eq:varbound00} more carefully. Indeed, taking for simplicity $S' = B_L=  B(0,L)$, with $S_L= \partial_{\text{i}} B_L$, one has
\begin{equation*}
\begin{split}
\sum_{y,z\in S_L}  g^*_{\Lambda}(y,z) \leq c \sum_{y \in S_L}\sum_{n=1}^L|\{ z \in S_L; |z-y|=n \}| n^{-(d-2)} \leq c' \sum_{y \in S_L}\sum_{n=1}^L 1 = O(L^d), \text{ as }L \to \infty
\end{split}
\end{equation*}
which is far better than the crude bound $|\partial_{\text{i}}S'|^2 = O(L^{2d-2})$. In particular (for this choice of $S'$), since $\text{cap}^*(B_L) \geq cL^{d-2}$, this gives $\text{var}^{*}_{\Lambda}(\psi_K(x)) \leq c(\sigma^*)^2 \cdot O(L^{d-2(d-2)})$, and the last term is bounded uniformly if $d \geq 4$. \hfill $\square$
\end{remark} 

\section{Applications} \label{S:apps}

We now explore some of the consequences of Theorem \ref{P:dec_ineq}, by investigating connected components of level sets of $\varphi$, and deduce, among other things, the results \eqref{eq:intro5}, \eqref{eq:intro6}, mentioned in the introduction. We will ultimately show that our decoupling inequality \eqref{eq:dec_ineq} can be made to fit the general framework of \cite{DRS14.3}, \cite{DRS14.2}, \cite{PRS15} and \cite{Sa14}, but, as it turns out, this requires improving on the error term $\delta_{S,S'}(\varepsilon, \Sigma)$ in a certain sense. To this end, we first show in Theorem \ref{T:subcrit} below that connected components of sites where $\varphi$ exceeds a sufficiently large height decay rapidly as a function of their diameter. This result, which is interesting in its own right, involves a by now standard renormalization argument, akin to the one developed in \cite{Sz12a} or \cite{RoS13}, by which Theorem \ref{P:dec_ineq} is used as one-step renormalization, see Lemma \ref{L:renorm1}. The only issue, which is key in passing from one length scale $L_n$ to the next, is to keep the sprinkling parameter $\varepsilon$ as small as possible while retaining good control on the error term on the right-hand side of \eqref{eq:dec_ineq}, which translates into considering sets $S, S'$ with sufficiently small cross section $\Sigma$.  

The resulting decoupling inequalities are stated below in Corollary \ref{C:DI1}. These are sufficiently strong to prove the desired connectivity function of upper level-sets $E^{\geqslant h}=\{ x \in \mathbb{Z}^d; \, \varphi_x \geq h\}$, decay stretched exponentially as a function of distance, at large heights $h$. Using this supplementary information, we will retroactively ensure that, up to a small error (in terms of $|S|$, $|S'|$), one can prevent $(\varphi,\widetilde{\varphi})$ to land in $G_{\Lambda,M}^c$ in Lemma \ref{L:monot}, and thus avoid the use of Lemma \ref{L:error} altogether. The resulting inequality is stated below in Theorem \ref{T:dec_ineq2}. It provides the crucial tool which enables us to resort to \cite{DRS14.3}, \cite{DRS14.2}, \cite{PRS15} and \cite{Sa14}, to deduce a host of results regarding the geometry of $E^{\geqslant h}$, when $h$ is sufficient small, see Theorems \ref{T:chem_dist}, \ref{T:RW_perc} and Remark \ref{R:moreresults} below. 

We now introduce the renormalization scheme, and consider a sequence of length scales
\begin{equation}\label{eq:renorm1}
L_{n+1} = 100\cdot R^n, \, n \geq 0, \text{ with } R \geq 100^2.
\end{equation}
We will encounter families of events $A_m$ which are naturally indexed by the leaves $m \in T^{(n)}$ of a binary tree of depth $n$. This requires a small amount of notation. Denote by $T_n = \bigcup_{0\leq k \leq n}T^{(k)}$, $T^{(k)}= \{0,1 \}^k$ the canonical binary tree of depth $n$. For $m \in T^{(k)}$, we write $m0$ and $m1$ for the children of $m$ in $T^{(k+1)}$. A map $\tau:T_n\to \mathbb{Z}^d$ will be called \textit{proper embedding with base $x$} if $\tau(\emptyset)= x$,
\begin{equation}\label{eq:renorm2}
 \tau(m) \in L_{n-k}\mathbb{Z}^d \text{ for } m \in T^{(k)}, 0\leq k \leq n
\end{equation} 
and, with the notation $\widetilde{B}_{\tau(m)} = B(\tau(m), 10 L_{n-k})$, if $m \in T^{(k)}$, one has, for all $0\leq k < n$, $m \in T^{(k)}$ and $i=1,2$,
\begin{equation}\label{eq:renorm3}
\widetilde{B}_{\tau(mi)} \subset \widetilde{B}_{\tau(m)} \text{ and } d_{\mathcal{G}}(\widetilde{B}_{\tau(m0)}, \widetilde{B}_{\tau(m1)}) \geq L_{n-k}/100,
\end{equation} 
with $d_{\mathcal{G}}$ denoting graph distance on $\mathcal{G}$. The set of all proper embeddings of $T_n$ with base $x$ will be denoted by $\Xi_{n,x}$. A sequence $(A_{\tau, m})_{m \in T^{(n)}}$ of events (each part of $\widehat{\Omega}$) will be called $\tau$-adapted if
\begin{equation}\label{eq:renorm4}
A_{\tau, m} \in \sigma(Y_x, x \in \widetilde{B}_{\tau(m)}), \text{ for all } m \in T^{(n)}.
\end{equation} 
Finally, we suppose, cf. above Theorem \ref{P:dec_ineq}, that $\mu \in \mathscr{W}$.
A first consequence of Theorem~\ref{P:dec_ineq} and the above choices is the following
\begin{lem}[One-step renormalization]\label{L:renorm1} $(x_0 \in \mathbb{Z}^d$, $n \geq 0$, $\tau \in \Xi_{n+1,x_0}$, $h \in \mathbb{R}$, $R \geq c_{13})$

\medskip
\noindent For all $\varepsilon \geq \varepsilon_n = c_{14}[2^{2n}/ {R}^{n+1}]^{1/2}$, and all families $(A_{\tau, m})_{m \in T^{(n+1)}}$ of increasing, $\tau$-adapted events
\begin{equation} \label{eq:renorm5}
\mu\Big[ \bigcap_{m \in T^{(n+1)}}A_{\tau, m}^h \Big] \leq \mu\Big[ \bigcap_{m \in T^{(n)}}A_{\tau, 0m}^{h-\varepsilon} \Big]  \mu\Big[ \bigcap_{m \in T^{(n)}}A_{\tau, 1m}^{h-\varepsilon}  \Big] + \delta_n,
\end{equation}
where $\delta_n= e^{-c_{15}\sqrt{R}^{n+1}}$.
\end{lem}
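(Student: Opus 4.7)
The plan is to apply Theorem \ref{P:dec_ineq} once, to decouple the two subtrees of $T_{n+1}$ rooted at $\tau(0)$ and $\tau(1)$. Concretely, I will set $S = \bigcup_{m \in T^{(n)}} \widetilde{B}_{\tau(0m)}$ and $S' = \bigcup_{m \in T^{(n)}} \widetilde{B}_{\tau(1m)}$; each is a union of $2^n$ fixed-size boxes contained in $\widetilde{B}_{\tau(0)}$ and $\widetilde{B}_{\tau(1)}$, respectively, and by \eqref{eq:renorm3} at the top level they lie at mutual distance at least $L_{n+1}/100 = R^n$, so in particular $S \cap S' = \emptyset$. By the $\tau$-adaptedness condition \eqref{eq:renorm4}, the event $A^h = \bigcap_{m \in T^{(n)}} A_{\tau, 0m}^h$ is increasing and measurable with respect to $\sigma(1\{\varphi_x \geq h\}; x \in S)$, while $\bigcap_{m \in T^{(n)}} A_{\tau, 1m}^h$ is a corresponding event in $\sigma(1\{\varphi_x \geq h\}; x \in S')$.

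To meet the continuity hypothesis of Theorem \ref{P:dec_ineq}, I will approximate $1_{\bigcap_m A_{\tau, 1m}^h}$ from above by a family $(f_\delta)_{\delta > 0}$ of bounded continuous functions, obtained by replacing each indicator $1\{\varphi_x \geq h\}$ by a smooth monotone ramp supported on $[h-\delta,\infty)$; since $\mu$ admits a Lebesgue density on any finite coordinate projection, $f_\delta \downarrow 1_{\bigcap_m A_{\tau, 1m}^h}$ $\mu$-a.s.\ as $\delta \downarrow 0$. Applying \eqref{eq:dec_ineq} to $A^h$ and $f_\delta$, passing to the limit via dominated convergence, and then using the monotonicity $\mu(\bigcap_m A_{\tau, 1m}^{h}) \leq \mu(\bigcap_m A_{\tau, 1m}^{h-\varepsilon})$ on the second factor, yields \eqref{eq:renorm5} with error $\delta_{S, S'}(\varepsilon, \Sigma)$ in place of the asserted $\delta_n$.

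The substantive remaining task is to show $\delta_{S, S'}(\varepsilon_n, \Sigma) \leq \delta_n$ for $R \geq c_{13}$, and I will estimate each ingredient appearing in \eqref{eq:eta_error} in turn. Crude union bounds give $|S|, |\partial_{\textnormal{i}} S'| \leq c\, 2^n$. The ratio $\sigma^*(S, S')$, cf.\ \eqref{eq:sigma_ref*}, is bounded by a dimensional constant, since for $x \in S, y \in S'$ one has $|x - y| \asymp L_{n+1}$, hence $g^*(x, y) \asymp L_{n+1}^{-(d-2)}$ by \eqref{eq:GF_bounds}. A matching lower bound $\textnormal{cap}^*(S') \geq c'\, 2^n$ follows from the variational characterisation \eqref{eq:cap2}--\eqref{eq:cap3} by testing against the uniform probability measure on the $2^n$ constituent leaf boxes: the diagonal energy contributions dominate, the off-diagonal ones being controlled via \eqref{eq:GF_bounds} and the hierarchical tree separations \eqref{eq:renorm3}. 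Finally, Corollary \ref{C:exit_dist} combined with a last-exit decomposition as in \eqref{eq:hit1} yields $\Sigma_\Lambda(S, S') \leq c\, \textnormal{cap}^*(S') \sup_{x \in S,\, y \in S'} g^*(x, y) \leq c'\, 2^n R^{-n(d-2)}$, uniformly in $\Lambda$ large enough, and hence the same bound for $\Sigma$.

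Substituting these estimates into \eqref{eq:eta_error} with $\varepsilon = \varepsilon_n = c_{14}[2^{2n}/R^{n+1}]^{1/2}$, the square bracket becomes at least $c\, c_{14}^2\, R^{n(2d-5)-1}$, which for $d \geq 3$ dominates $c''\, R^{n-1}$. The exponent in \eqref{eq:eta_error} is thus at most $c_0 2^n - c_2 c_{14}^2 R^{n-1}$, while the prefactor $c_1 |S|$ contributes only $\log(c_1 2^n) = O(n)$. For $R \geq c_{13}$ sufficiently large, $c_2 c_{14}^2 R^{n-1}$ absorbs both $c_0 2^n$ and $O(n)$ and leaves a residual of at least $c_{15} R^{(n+1)/2} = c_{15}\sqrt{R}^{\,n+1}$, which gives precisely $\delta_n$. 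The main obstacle will be the lower bound on $\textnormal{cap}^*(S')$ and the bookkeeping of constants across the different regimes of $n$; the capacity argument in particular requires care because $S'$ is a union of $2^n$ boxes whose pairwise separations range over the full dyadic scale from $L_1/100$ up to $L_n/100$ dictated by the tree embedding.
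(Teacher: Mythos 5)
Your overall route is the paper's: apply Theorem \ref{P:dec_ineq} once with $S=\bigcup_m \widetilde{B}_{\tau(0m)}$, $S'=\bigcup_m \widetilde{B}_{\tau(1m)}$, $A^h=\bigcap_m A^h_{\tau,0m}$, $f=1_{\bigcap_m A^h_{\tau,1m}}$, bound $\sigma^*$ by a constant, $|S|,|\partial_{\textnormal{i}}S'|$ by $c2^n$, the cross-section via Corollary \ref{C:exit_dist}, and feed everything into \eqref{eq:eta_error}. Two of your additions are fine but worth flagging: the mollification of the indicator is a legitimate patch for the continuity hypothesis of Theorem \ref{P:dec_ineq} (the paper applies the theorem to the indicator directly), and your lower bound $\textnormal{cap}^*(S')\geq c'2^n$ via the hierarchical energy estimate is correct but unnecessary extra work -- the paper simply uses monotonicity, $\textnormal{cap}^*(S')\geq \textnormal{cap}^*$ of a single leaf box, because the factor $2^n$ in $\varepsilon_n$ already cancels the $|\partial_{\textnormal{i}}S'|\asymp 2^n$ in the denominator of \eqref{eq:eta_error}; so what you identify as ``the main obstacle'' is not where the proof is decided.

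The genuine gap is in your final arithmetic. With your own estimates, $\Sigma\lesssim 2^nR^{-n(d-2)}$ (coming from the separation $d_{\mathcal G}(S,S')\geq L_{n+1}/100=R^n$ of \eqref{eq:renorm6}) and $\varepsilon_n\asymp 2^nR^{-(n+1)/2}$, the squared bracket in \eqref{eq:eta_error} is of order $R^{2n(d-2)-(n+1)}=R^{n(2d-5)-1}$, exactly as you write. But this does \emph{not} ``absorb $c2^n+O(n)$ and leave $c_{15}\sqrt{R}^{\,n+1}$'' for small $n$: at $n=0$ it equals $R^{-1}$ in every dimension, so the exponent in \eqref{eq:eta_error} is positive and the error bound exceeds $1$, while the lemma demands $\delta_0=e^{-c_{15}\sqrt R}$; for $d=3$ the comparison $R^{n-1}\gtrsim 2^n+R^{(n+1)/2}$ also fails at $n=1,2$. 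So your chain of estimates does not prove \eqref{eq:renorm5} for those $n$ with constants $c_{13},c_{14},c_{15}$ independent of $n$. The way out (and what the paper's proof effectively uses, cf.\ the bound $\Sigma^{-1}\geq c_{18}(R/2)^{n+1}$ in \eqref{eq:renorm7}) is that the relevant separation must be of order $L_{n+1}\asymp R^{n+1}$, i.e.\ the scales should be read as $L_k=100R^k$ rather than the literal $L_{n+1}=100R^n$ of \eqref{eq:renorm1}; with $d_{\mathcal G}(S,S')\gtrsim R^{n+1}$ your computation gives bracket$^2\gtrsim R^{(n+1)(2d-5)}\geq R^{n+1}$, which dominates $2^n$ and $\sqrt R^{\,n+1}$ uniformly in $n\geq 0$ once $R\geq c_{13}$, and the proof closes. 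You also silently use $\Sigma_\Lambda\leq c\,\textnormal{cap}^*(S')\sup g^*$, which additionally needs the hitting probability bounded away from $1$ (true for $R$ large, as the paper notes); state it.
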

\begin{proof} We apply Theorem \ref{P:dec_ineq} with $A^h = \bigcap_{m \in T^{(n)}}A_{\tau, 0m}^{h}$, $f= 1_{\bigcap_{m \in T^{(n)}}A_{\tau, 1m}^{h}}$, so that, in particular, $\mathbb{E}_{\mu}[1_{A^h} f] = \mu[ \bigcap_{m \in T^{(n+1)}}A_{\tau, m}^h]$. Accordingly, we set $S = \bigcup_{m \in T^{(n)}} \widetilde{B}_{\tau(0m)}$, $S' = \bigcup_{m \in T^{(n)}} \widetilde{B}_{\tau(1m)}$. By adaptedness, cf. \eqref{eq:renorm4}, $A \in \sigma(Y_x, x \in S)$, and $f(\varphi) \in \sigma (\varphi_x,  x\in S' )$. Moreover, inductively, by \eqref{eq:renorm3}, we see that $S \subset \widetilde{B}_{\tau(0)}$, while $S \subset \widetilde{B}_{\tau(1)}$, and therefore
\begin{equation} \label{eq:renorm6}
100L_{n+1} \geq d_{\mathcal{G}}(S,S') \geq L_{n+1}/100.
\end{equation}
We now derive a suitable upper bound for the cross-section $\Sigma= \Sigma(S,S', (\Lambda_k)_{k \geq 0})$ defined in \eqref{eq:def_sigma_inf}. By possibly passing to a subsequence, we may assume that $\Sigma= \lim_k \Sigma(S,S', \Lambda_k)$ and that the limit is monotone. Hence, for all $\Lambda_k \supset  \bigcup_{x \in S}\Lambda_0(x,S')$, cf. Corollary \ref{C:exit_dist}, $x \in S$, and $R \geq c$, setting $\Lambda_k'=\Lambda_k \setminus S$, noting that $\sigma_x^*(S') \leq c$ for all $x \in S$, which follows from \eqref{eq:GF_bounds}, \eqref{eq:sigma} and \eqref{eq:renorm6}, and using that $W: x \mapsto \frac{x}{1-x}$ is continuous and increasing on~$[0,1)$, we infer that for all $R \geq c$,
\begin{equation}\label{eq:renorm7}
\begin{split}
\Sigma & \stackrel{\eqref{eq:invisibility_cond}}{\leq} \lim_k \sup_{x \in S }W \Big(\sup_{\varphi, \xi} \mathbf{P}_{x,\varphi}^{\mathcal{G}_{\Lambda_k'}, \xi}[H_{S'} < H_{\Lambda_k^c}]\Big) \stackrel{\eqref{eq:exit_distr}}{\leq}  \sup_{x \in S } W\big( c_6\sigma_x^*(S') P_x^*[H_{S'}< \infty] \big)\\
&\leq W\big(c_{16} P_x^*[H_{S'}< \infty]\big) \leq 2c_{16} c_{17} \Big(\frac2R \Big)^{n+1},
\end{split}
\end{equation}
using \eqref{eq:renorm1}, \eqref{eq:renorm6} in the last step, along with the estimate $$
P_x^*[H_{S'}< \infty] \leq c_{17} d_{\mathcal{G}}(S,S')^{-(d-2)} 2^n,
$$
 which follows from \eqref{eq:hit1}, \eqref{eq:GF_bounds} and the fact that $\text{cap}(\cdot)$ is subadditive (note that this last bound also implies that $c_{16}P_x^*[H_{S'}< \infty] \leq 1/2$, whenever $R \geq c$, which is implied in the last step of \eqref{eq:renorm7}). The claim \eqref{eq:renorm5} then follows immediately from \eqref{eq:dec_ineq}, upon noticing that, for $R \geq c$, since $\Sigma^{-1} \geq c_{18} (\frac R2 )^{n+1}$ by \eqref{eq:renorm7}, and $| \partial_{\text{i}} S'| \leq |S'| = |S|= c_{19}2^n$, $\sigma^*(S,S')\leq c_{20}$, the error term $\delta_{S,S'}(\varepsilon,\Sigma)$ defined in \eqref{eq:eta_error} can be bounded, whenever $\varepsilon^2 \geq \varepsilon_n^2 = |S|100^{d-1} c_{20}/(c_2 c_{18}^2 \text{cap}^*(B_{100}) {(R/2)}^{n+1})$, as follows:
\begin{equation*}
\begin{split}
\delta_{S,S'}(\varepsilon,\Sigma)&\leq c_1 \exp \Big\{ 2|S|- c_2 \varepsilon_n^2 \frac{\text{cap}^*(B_{100})}{c_{20} 2^n 100^{d-1}}  {c_{18}^2} \Big(\frac R2 \Big)^{2(n+1)} \Big\} \\
&= c_1  \exp \Big\{ 2|S| \Big(1- \Big(\frac R4\Big)^{n+1} \Big) \Big\} \leq  c_1  \exp \Big\{ - \frac{c_{19}^2}{2} L_0^d \sqrt{R}^{n+1}\Big\} \equiv \delta_n,
\end{split}
\end{equation*}
whenever $R \geq c$.
\end{proof}
Lemma \ref{L:renorm1} can be iterated, yielding the following
\begin{cor}[Decoupling inequalities I] $(x_0\in \mathbb{Z}^d$, $n \geq 0$, $\tau \in \Xi_{n,x_0}$, $h_0 \in \mathbb{R}$, $R \geq c_{13})$
\label{C:DI1}

\medskip
\noindent For all families $(A_{\tau, m})_{m \in T^{(n)}}$ of increasing, $\tau$-adapted events, with $h_{n}^+= h_0 + \sum_{0\leq k < n} \varepsilon_n$, $h_{\infty}^+= \lim_n h_n^+$ and $ \varepsilon_n$ as defined above \eqref{eq:renorm5},
 \begin{equation}\label{eq:renorm8}
 \mu\Big[ \bigcap_{m \in T^{(n)}}A_{\tau, m}^{h_{\infty}^+} \Big] \leq   \mu\Big[ \bigcap_{m \in T^{(n)}}A_{\tau, m}^{h_{n}^+} \Big] \leq \prod_{m \in T^{(n)}}\Big( \mu[A_{\tau, m}^{h_{0}}] + \bar{\delta}(R) \Big),
 \end{equation}
 where
  \begin{equation}\label{eq:renorm9}
  \bar{\delta}(R) \stackrel{\textnormal{def.}}{=} \sum_{n \geq 0} (\delta_n)^{\frac{1}{2^{n+1}}} 
  .
 \end{equation}
\end{cor}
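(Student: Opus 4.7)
The plan is to prove the two inequalities separately. The first inequality, $\mu[\bigcap_m A_{\tau,m}^{h_\infty^+}] \leq \mu[\bigcap_m A_{\tau,m}^{h_n^+}]$, is immediate from monotonicity: since each $A_{\tau,m}$ is an increasing subset of $\widehat{\Omega}$, the event $A_{\tau,m}^{h}$ is \emph{decreasing} in $h$ (raising $h$ only makes the indicators $1\{\varphi_x \geq h\}$ smaller), and $h_\infty^+ \geq h_n^+$ as all $\varepsilon_k \geq 0$.

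For the second inequality, I would proceed by induction on $n$. The base case $n=0$ is trivial since $h_0^+ = h_0$ and the product on the right contains a single factor bounded below by $\mu[A_{\tau,\emptyset}^{h_0}]$. For the inductive step, observe that any proper embedding $\tau \in \Xi_{n,x_0}$ restricts to proper embeddings of the two subtrees of depth $n-1$ rooted at $\tau(0)$ and $\tau(1)$, respectively. Applying Lemma \ref{L:renorm1} at this level (with parameter index $n-1$, noting that $h_n^+ - \varepsilon_{n-1} = h_{n-1}^+$), one obtains
\begin{equation*}
\mu\Big[\bigcap_{m \in T^{(n)}} A_{\tau,m}^{h_n^+}\Big] \leq \mu\Big[\bigcap_{m \in T^{(n-1)}} A_{\tau,0m}^{h_{n-1}^+}\Big] \cdot \mu\Big[\bigcap_{m \in T^{(n-1)}} A_{\tau,1m}^{h_{n-1}^+}\Big] + \delta_{n-1}.
\end{equation*}
Applying the inductive hypothesis to each factor (with constant $C_{n-1} := \sum_{j=0}^{n-2} \delta_j^{1/2^{j+1}}$ in place of $\bar{\delta}(R)$) bounds the right-hand side by
$\prod_{m \in T^{(n)}}(a_m + C_{n-1}) + \delta_{n-1}$, writing $a_m := \mu[A_{\tau,m}^{h_0}]$.

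The remaining step is the elementary inequality
\begin{equation*}
\prod_{m \in T^{(n)}}(a_m + C_{n-1}) + \delta_{n-1} \;\leq\; \prod_{m \in T^{(n)}}\bigl(a_m + C_{n-1} + \delta_{n-1}^{1/2^n}\bigr),
\end{equation*}
which holds for arbitrary $a_m \geq 0$ because, upon expanding the right-hand side, one finds both the summand $\prod_m a_m$ and the all-ones term $(\delta_{n-1}^{1/2^n})^{2^n} = \delta_{n-1}$, with all remaining cross-terms non-negative. Setting $C_n := C_{n-1} + \delta_{n-1}^{1/2^n} = \sum_{j=0}^{n-1} \delta_j^{1/2^{j+1}} \leq \bar{\delta}(R)$ and using that $a_m + C_n \geq 0$ along with monotonicity in the constant closes the induction and yields the claim.

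There is no genuine obstacle here: the corollary is a routine iteration of Lemma \ref{L:renorm1}, where the only non-trivial observation is the exponent $1/2^{n+1}$ in the definition \eqref{eq:renorm9} of $\bar{\delta}(R)$, which is dictated precisely by the elementary product inequality above applied across the $2^n$ leaves present at stage $n$.
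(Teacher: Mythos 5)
Your proof is correct and follows essentially the same route as the paper: the first inequality by monotonicity of $h\mapsto A^h$ for increasing $A$, and the second by induction over $n$ using Lemma \ref{L:renorm1} with $\varepsilon=\varepsilon_{n-1}$ (so that $h_n^+-\varepsilon_{n-1}=h_{n-1}^+$), the paper merely stating the inductive bound $\prod_m(\mu[A_{\tau,m}^{h_0}]+\sum_{k<n}\delta_k^{1/2^{k+1}})$ without spelling out the elementary product inequality that you make explicit.
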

\begin{proof}
The first inequality in \eqref{eq:renorm8} is due to monotonicity. The second one follows from
$$
\mu\Big[ \bigcap_{m \in T^{(n)}}A_{\tau, m}^{h_{n}^+} \Big] \leq \prod_{m \in T^{(n)}}\Big( \mu[A_{\tau, m}^{h_{0}}] + \sum_{k < n}  (\delta_k)^{\frac{1}{2^{k+1}}}\Big),
$$
which is easily shown by induction over $n$, using \eqref{eq:renorm5}. The fact that $\bar{\delta}(\cdot)$ is finite is plain from the definition of $\delta_n$ below \eqref{eq:renorm5}.
\end{proof}

\begin{remark}
As for Theorem \ref{P:dec_ineq}, cf. Remark \ref{R:dec_ineq1}, 4), \eqref{eq:renorm8} has the following complement. Under the assumptions of Corollary \ref{C:DI1}, for any family $(A_{\tau, m})_{m \in T^{(n)}}$ of \textit{decreasing}, $\tau$-adapted events, setting $h_{n}^-= h_0 - \sum_{0 \leq k < n} \varepsilon_n$, $h^-_{\infty}= \lim_n h_n^-$, one has
\begin{equation}\label{eq:renorm8.99}
 \mu\Big[ \bigcap_{m \in T^{(n)}}A_{\tau, m}^{h_{\infty}^-} \Big] \leq   \mu\Big[ \bigcap_{m \in T^{(n)}}A_{\tau, m}^{h_{n}^-} \Big] \leq \prod_{m \in T^{(n)}}\Big( \mu[A_{\tau, m}^{h_{0}}] + \bar{\delta}(R) \Big).
\end{equation}
This follows by considering the flipped events $\overline{A}_{\tau, m}=\{ \omega^f; \omega \in A_{\tau, m}\}$, cf. Remark \ref{R:dec_ineq1}, 4), noting that  $\mu[ \bigcap_{m \in T^{(n)}}A_{\tau, m}^{h} ] = \mu[ \bigcap_{m \in T^{(n)}}\overline{A}_{\tau, m}^{-h} ]$, for all $n \geq 0$ and $h \in \mathbb{R}$, and applying \eqref{eq:renorm8} to the events $\overline{A}_{\tau, m}$, $m \in T^{(n)}$, with the sequence $(-h_n^-)_{n\geq 0}$. \hfill $\square$
\end{remark}

We are now ready to state our first result regarding the geometry of $E^{\geqslant h}= \{x\in \mathbb{Z}^d;\,  \varphi_x \geq h\}$. For $K,K' \subset \mathbb{Z}^d$, we denote by $\{ K \stackrel{\geqslant h}{\longleftrightarrow}K' \} \subset \Omega$ the event that $E^{\geqslant h}$ contains a nearest-neighbor path in $\mathcal{G}$ intersecting both $K$ and $K'$. 

\begin{thm} $(\mu \in \mathscr{W})$
\label{T:subcrit}
\begin{equation}\label{eq:renorm10}
\begin{split}
h^+ = h^+(\mu)  \stackrel{\text{def.}}{=}  \inf\Big\{ h \in \mathbb{R} ; \, &\text{there exist $c(h), c'(h)$ and $\rho(h)>0$ s.t. } \\
&\mu(x \stackrel{\geqslant h}{\longleftrightarrow} y ) \leq c(h)e^{-c'(h)|x-y|^{\rho}}, \text{ for all }x,y \in \mathbb{Z}^d\Big\} < \infty.
\end{split}
\end{equation}
\end{thm}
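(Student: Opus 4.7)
The plan is to iterate the decoupling inequality of Corollary \ref{C:DI1} along the binary-tree embedding, combined with a Brascamp-Lieb tail estimate at the smallest scale. For $x \in \mathbb{Z}^d$ and $L \geq 1$, let
\begin{equation*}
\mathcal{C}_L^h(x) = \{x \stackrel{\geqslant h}{\longleftrightarrow} \partial B(x, L)\},
\end{equation*}
an increasing event measurable with respect to $\sigma(Y_z; z \in B(x, L))$. Since $\mu(x \stackrel{\geqslant h}{\longleftrightarrow} y) \leq \mu(\mathcal{C}_{L}^h(x))$ whenever $|x-y| \geq L$, it is enough to exhibit some $h_0$ large enough so that $\sup_x \mu(\mathcal{C}_{L_n}^{h_\infty^+}(x))$ decays at least like $\exp(-c L_n^{\rho})$ for some $\rho > 0$, with $h_\infty^+ = h_0 + \sum_{k\geq 0}\varepsilon_k$; note that the sum is finite because $\varepsilon_k \sim 2^k R^{-k/2}$ is geometrically summable as soon as $R > 4$.

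The first key input is a purely geometric cascade: for each proper embedding $\tau \in \Xi_{n,x}$ the events
\begin{equation*}
A_{\tau,m}^h \stackrel{\text{def.}}{=} \bigl\{ B(\tau(m), L_0) \stackrel{\geqslant h}{\longleftrightarrow} \partial \widetilde{B}_{\tau(m)} \bigr\}, \quad m \in T^{(n)},
\end{equation*}
are increasing and $\tau$-adapted in the sense of \eqref{eq:renorm4}. Any nearest-neighbor path realizing $\mathcal{C}_{L_n}^h(x)$ is contained in $\widetilde{B}_{\tau(\emptyset)} = B(x, 10 L_n)$ and has diameter at least $L_n$; by repeated pigeonholing on concentric annuli of width of order $L_{n-1}$, such a path can be split into two sub-paths living in well-separated sub-balls at scale $L_{n-1}$, each again supporting a crossing of the required form. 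Iterating $n$ times along the binary tree yields the containment
\begin{equation*}
\mathcal{C}_{L_n}^h(x) \subseteq \bigcup_{\tau \in \Xi_{n,x}} \bigcap_{m \in T^{(n)}} A_{\tau,m}^h.
\end{equation*}

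A union bound over $\tau \in \Xi_{n,x}$ followed by Corollary \ref{C:DI1} gives
\begin{equation*}
\mu\bigl(\mathcal{C}_{L_n}^{h_\infty^+}(x)\bigr) \leq |\Xi_{n,x}| \bigl( p_0(h_0) + \bar{\delta}(R) \bigr)^{2^n},
\end{equation*}
with $p_0(h_0) = \sup_y \mu(A_{\tau,m}^{h_0})$. Each split in the tree offers at most $(20R)^d$ choices per child, so $|\Xi_{n,x}| \leq C_R^{2^n}$ for some $C_R = (20R)^{2d}$. The initial-scale probability is controlled via a union bound over $B(y, L_0)$ together with Lemma \ref{L:BL} applied with $\nu = \lambda \delta_z$: optimizing over $\lambda$, and combined with \eqref{eq:tight}, this yields $\mu(\varphi_z \geq h_0) \leq \exp(-c h_0^2)$, hence $p_0(h_0) \to 0$ as $h_0 \to \infty$. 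Since $\bar{\delta}(R)$ decays stretched-exponentially as $R \to \infty$ while $C_R$ only grows polynomially, one can first fix $R$ so large that $C_R \bar{\delta}(R) < \tfrac14$, then $h_0$ so large that $C_R p_0(h_0) < \tfrac14$, obtaining $\mu(\mathcal{C}_{L_n}^{h_\infty^+}(x)) \leq 2^{-2^n}$. Using $L_n \leq 100 R^n$, one has $2^n \geq c L_n^{\rho}$ with $\rho = \log 2/\log R > 0$, yielding the claimed stretched exponential decay at height $h_\infty^+ < \infty$, whence $h^+(\mu) \leq h_\infty^+ < \infty$.

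The main obstacle is the geometric cascade: extracting from a single long connection a full binary tree of disjoint scale-$L_0$ crossings matching the separation requirement \eqref{eq:renorm3} at every level of the tree requires a careful iterative path decomposition along well-chosen annular shells. Such cascades are standard in the random-interlacements renormalization literature (cf. \cite{Sz12a}, \cite{RoS13}), and once set up, the rest of the argument is a mechanical synthesis of Corollary \ref{C:DI1} with the Gaussian-type tail provided by Brascamp-Lieb.
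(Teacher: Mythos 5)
Your proposal is correct and follows essentially the same route as the paper's proof: the same cascade of crossing events indexed by proper embeddings of the binary tree, a union bound over embeddings of cardinality $C_R^{2^n}$, iteration of Corollary \ref{C:DI1}, and the choice of first $R$ large (to beat $\bar\delta(R)$) and then $h_0$ large via a union bound and the Brascamp--Lieb tail at scale $L_0$. The only cosmetic difference is that the paper works with the restricted family $\Xi^*_{n,x}$ of embeddings and crossings of annuli $B_{L}\to\partial B_{2L}$ rather than $B_{L_0}\to\partial\widetilde B$, which does not affect the argument.
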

With Corollary \ref{C:DI1} at hand, the proof is similar to Theorem 4.1 in \cite{Sz12a}, using a certain set of so-called cascading events. We include the short proof for completeness.
\begin{proof}
Let $x\in \mathbb{Z}^d$. One defines the set $\Xi_{n,x}^*$, for $n \geq 0$, consisting of all proper embeddings $\tau \in \Xi_{n,x}$, cf. \eqref{eq:renorm2} and \eqref{eq:renorm3}, with the additional property that, for every $1 \leq k < n$ and $m \in T^{(k)}$, $B(\tau(m0), L_{n-k-1}) \cap B(\tau(m), L_{n-k}) \ne \emptyset$ and $B(\tau(m1), L_{n-k-1}) \cap B(\tau(m), 3L_{n-k}/2) \ne \emptyset$. On account of \eqref{eq:renorm1}, $\Xi_{n,x}^*$ is not empty, and moreover $|\Xi_{n,x}^*| \leq (c_{21}R^{d-1})^{2^n}$. Consider the events $E^h_{L,y}= \{ B_L(y) \stackrel{\geqslant h}{\leftrightarrow} \partial B_{2L}(y) \}$,  for $y \in \mathbb{Z}^d$, $L \geq 1$ and define $A_{\tau, m}^h = E^h_{L_0, \tau(m)}$, for $m \in T^{(n)}$, $\tau \in \Xi_{n,x}^*$. 
Since any path connecting $B_L(y)$ to $ \partial B_{2L}(y)$ must intersect both $\partial_{\text{i}}B_L(y)$ and also $\partial_{\text{i}}B_{3L/2}(y)$, one obtains inductively that $E^h_{L_n,x} \subset \bigcup_{\tau \in  \Xi_n^*} \bigcap_{m \in T^{(n)}} A_{\tau, m}^h$, for all $n \geq 0$, $h \in \mathbb{R}$, and thus, for $R \geq c_{13}$, $n\geq 0$, $h_0 \in \mathbb{R}$, using \eqref{eq:renorm8},
\begin{equation}
\label{eq:renorm11}
\mu[E^{h_{\infty}^+}_{L_n,x}] \leq |\Xi_{n,x}^*| \cdot \mu\Big[ \bigcap_{m \in T^{(n)}}A_{\tau, m}^{h_{n}^+} \Big] \leq  \Big[c_{21}R^{d-1}\Big(\sup_{z\in \mathbb{Z}^d}\mu[E^{h_0}_{L_0, z}] + c_{22}e^{-c_{15} \sqrt{R}}\Big)\Big]^{2^n},
\end{equation}
where we used that $\bar{\delta}(R) \leq c_{22} e^{-c_{15} \sqrt{R}}$, which follows readily from \eqref{eq:renorm9} and the definition of $\delta_n$ below \eqref{eq:renorm5}. Now, first choose $R \geq c_{13}$ such that $c_{21}c_{22}R^{d-1} e^{-c_{15} \sqrt{R}}\leq 1/4$, and then $h_0$ large enough such that $\Big(\mu[E^{h_0}_{L_0, z}] \leq (4 c_{21}R^{d-1})^{-1}$, for all $z$, by considering, for instance, the maximum of $\varphi$ in a box around $z$ of radius $L_0\, (=100)$, which has to exceed $h_0$ on the event $E^{h_0}_{L_0, z}$, applying a union bound, and using the Brascamp-Lieb inequality \eqref{eq:BL}. With these choices, and noting that $h_{\infty}^+ < \infty$, cf. above \eqref{eq:renorm8}, it follows from \eqref{eq:renorm11} that $\mu[E^{h_{\infty}^+}_{L_n,x}] \leq 2^{-2^n}$, for all $n \geq 0$, and this implies readily that $\mu(x \stackrel{\geq h_{\infty}^+}{\longleftrightarrow} y )$ has stretched exponential decay in $|x-y|$, thus yielding \eqref{eq:renorm10}.
\end{proof}

\begin{remark} (Level-sets of $|\varphi|$).
By adapting the arguments of \cite{Ro14}, Section 4, one can show an analogue of Theorem \ref{T:subcrit}, whereby $\mu(x \stackrel{\geqslant h}{\longleftrightarrow} y )$, the connectivity function of the one-sided level set $E^{\geqslant h}=\{ x \in \mathbb{Z}^d;\, \varphi_x \geqslant h\}$, is replaced by that of $\{x \in \mathbb{Z}^d; \, |\varphi_x| \geqslant h\}$. \hfill $\square$
\end{remark}
We now use the fact our decoupling inequality provides a tool to control excursion sets above high levels to avoid ``falling out of'' the event $G_{\Lambda,M}^c$ in \eqref{eq:dec15}, thus avoiding the use of Lemma \ref{L:error} altogether and obtaining a different error term in \eqref{eq:dec_ineq}. A similar procedure was already necessary in the Gaussian case, see \cite{DRS14.3}. 

\begin{thm} $(\mu \in \mathscr{W})$
\label{T:dec_ineq2}

\medskip
\noindent There exists $K, \alpha, c_{23} \in (0, \infty)$ such that, for all $\varepsilon \in (0,1/2)$, $L \geq 1$, $h \in \mathbb{R}$, $x_1, x_2 \in\mathbb{Z}^d$ satisfying 
\begin{equation}
\label{eq:renorm20}
|x_1 -x_2| \geq \varepsilon^{-K} L,
\end{equation}
all increasing events $A  \in \sigma(Y_x, x \in B(x_1,10L))$, and bounded $f \geq 0$ satisfying $f \in\sigma(\varphi_x, x \in B(x_2,10L))$, one has
\begin{equation}
\label{eq:renorm21}
\mu(1_{A^h} \cdot f) \leq \mu({A^{h-\varepsilon}} ) \cdot \mu( f) + c_{23} \Vert  f\Vert_{\infty} e^{-L^{\alpha}}
\end{equation}
\end{thm}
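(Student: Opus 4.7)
The plan is to revisit the sprinkling argument from Proposition \ref{L:monot}, and to bound the probability of the bad event $G^c_{\Lambda,M}$ using Theorem \ref{T:subcrit} rather than the Brascamp--Lieb estimate of Lemma \ref{L:error}. Set $S = B(x_1, 10L)$, $S' = B(x_2, 10L)$, and pick $(\Lambda_n) \nearrow \mathbb{Z}^d$ with $\mu^0_{\Lambda_n} \stackrel{w}{\rightarrow} \mu$. The capacity estimate $\text{cap}^*(B_L) \asymp L^{d-2}$, Corollary \ref{C:exit_dist}, and the separation $|x_1-x_2| \geq \varepsilon^{-K} L$ jointly yield $\Sigma_{\Lambda_n}(S,S') \leq C \varepsilon^{K(d-2)}$ for $n$ large, hence $M_{\varepsilon,\Sigma_{\Lambda_n}} \geq C^{-1}\varepsilon^{1-K(d-2)}$ by \eqref{eq:decM}.

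Let $h^+_\ast = h^+(\mu)+1$, which is finite by Theorem \ref{T:subcrit}, and pick $K$ large enough (depending only on $d$, $c_0$ and $h^+_\ast$) that $M_{\varepsilon, \Sigma_{\Lambda_n}} \geq 4h^+_\ast$ uniformly in $\varepsilon \in (0,1/2)$ and in $n$ large. Applying Proposition \ref{L:monot} with $M = 4h^+_\ast$ and copying the calculation in \eqref{eq:dec9}, one gets
\[
\mu(1_{A^h} \cdot f) \leq \mu(A^{h-\varepsilon}) \mu(f) + \|f\|_\infty \liminf_n Q^0_{\Lambda_n}[G^c_{\Lambda_n, M}].
\]
On the event
$\mathcal{H} := \{\max_{y \in \partial_{\text{i}} S'} \varphi_y \leq h^+_\ast\} \cap \{\min_{y \in \partial_{\text{i}} S'} \widetilde\varphi_y \geq -h^+_\ast\}$,
the conditional expectation appearing in \eqref{eq:dec15} is bounded by $2h^+_\ast \leq M$, so $\mathcal{H} \subset G_{\Lambda_n, M}$. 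The $\varphi \mapsto -\varphi$ symmetry from \eqref{eq:sym} then reduces the task to bounding $\mu[\max_{y \in \partial_{\text{i}} S'} \varphi_y > h^+_\ast] \leq \frac{1}{2} e^{-L^{\alpha}}$ stretched-exponentially in $L$.

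The main obstacle is this last bound. A naive union bound combined with Brascamp--Lieb \eqref{eq:BL} only yields a factor $|\partial_{\text{i}} S'| \sim L^{d-1}$ with no $L$-decay. The necessary improvement harnesses Theorem \ref{T:subcrit}: since $h^+_\ast > h^+$, the level set $E^{\geqslant h^+_\ast}$ has stretched-exponential connectivity decay. Concretely, one would decompose $\partial_{\text{i}} S'$ into sub-boxes of intermediate side $L_0 = L^\theta$ for some $\theta \in (0,1)$ to be optimized, and for each sub-box distinguish the case of a macroscopic $E^{\geqslant h^+_\ast}$-excursion across $B(\cdot, L_0)$ (rare by Theorem \ref{T:subcrit}) from the contribution of small, localized clusters; the latter is controlled by a local Brascamp--Lieb moment estimate which, averaged over the sub-boxes of the $(d-1)$-dimensional surface, beats the $L^{d-1}$ volume factor. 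Optimizing over $\theta$, and absorbing the trivial regime of small $L$ into $c_{23}$, yields \eqref{eq:renorm21} for some $\alpha > 0$ depending only on the model.
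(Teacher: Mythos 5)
There is a genuine gap, and it sits exactly where you flag the ``main obstacle.'' The bound $\mu[\max_{y \in \partial_{\text{i}} S'} \varphi_y > h^+_\ast] \leq \frac12 e^{-L^\alpha}$ that your argument requires is false: $\partial_{\text{i}} S'$ contains of order $L^{d-1}$ sites, each exceeding the fixed level $h^+_\ast$ with a positive probability bounded away from $0$, and in fact the maximum of $\varphi$ over such a surface grows like $\sqrt{\log L}$; so the probability of your event $\mathcal{H}^c$ tends to $1$, not to $0$. No decomposition into sub-boxes of side $L^\theta$ can repair this, because Theorem \ref{T:subcrit} controls the probability that level-set clusters are \emph{large} (cross macroscopic distances), not the probability that the field exceeds a fixed level \emph{somewhere} on a surface of $L^{d-1}$ sites -- the latter event is typical, and your proposed ``local Brascamp--Lieb estimate averaged over sub-boxes'' is being asked to beat an event of probability close to one. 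The paper's proof circumvents precisely this by \emph{randomizing} $S'$: it sets $\mathscr{S}' = B(x_2,10L) \cup \bigcup_{x \in \partial_{\text{i}}B(x_2,10L)} \partial \mathscr{C}_x$, where $\mathscr{C}_x$ is the cluster of $x$ in $\{y:\varphi_y - \widetilde{\varphi}_y \geq \widehat{h}_+ + 1\}$, cf.\ \eqref{eq:renorm25}. By construction the inner boundary of any realization $S'$ of $\mathscr{S}'$ automatically carries $\varphi - \widetilde{\varphi} < \widehat{h}_+ + 1$, so $\{\mathscr{S}'=S'\} \subset G_{\Lambda,\widehat h_+ +1,S,S'}$ without any unlikely constraint on the field; the only bad event is $\{\mathscr{S}' \cap B(x_2,20L)^c \neq \emptyset\}$, i.e.\ a crossing of the annulus $B(x_2,20L)\setminus B(x_2,10L)$ by a cluster of the excursion set of $\widehat{\varphi}=\varphi-\widetilde{\varphi}$ above level $\widehat h_+ +1$, and \emph{that} is what decays stretched-exponentially. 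Note also that the relevant subcriticality input is for the difference field $\widehat{\varphi}$ under $\widehat{\mathbb{P}}$ (one defines $\widehat h_+$ by rerunning the renormalization of Theorem \ref{T:subcrit} for $\widehat{\varphi}$), rather than for $\varphi$ plus a symmetrization of $\widetilde{\varphi}$ separately, as in your event $\mathcal{H}$.

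Your preliminary steps (choice of $S$, the bound $\Sigma_{\Lambda}(S,S') \leq c\varepsilon^{K(d-2)}$ via Corollary \ref{C:exit_dist} and \eqref{eq:renorm20}, hence $M_{\varepsilon,\Sigma_\Lambda}$ large for $K$ large, and the reduction via Proposition \ref{L:monot} and the computation \eqref{eq:dec9}) do match the paper's strategy; but to complete the proof you must allow $S'$ to be the random set above (whence $S' \subset B(x_2,20L)$ rather than $B(x_2,10L)$ in the cross-section estimate, and the conditioning must be on $\sigma(\varphi_x,\widetilde{\varphi}_x,\,x\in S')$, with respect to which $f(\varphi)1_{\{\mathscr{S}'=S'\}}$ is measurable), summing over the possible realizations $S' \subset B(x_2,20L)$ as in \eqref{eq:renorm27}.
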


\begin{proof}
Let 
\begin{equation}\label{eq:phi_hat}
\widehat{\varphi} =\varphi- \widetilde{\varphi},
\end{equation}
 with $ \omega = (\varphi, \widetilde{\varphi})$ having law $\mu \otimes \mu$. Denote by $\widehat{\mathbb{P}}$ the law of $\widehat{\varphi} $. Then, defining $\widehat{E}_{L,x}^h$ to be the event that $B(x,L)$ is connected to $\partial B(x,2L)$ by a path in $\{x \in \mathbb{Z}^d; \, \widehat{\varphi}_x \geq h \}$, and mimicking the proof of Theorem \ref{T:subcrit} with these events, one deduces that the quantity $\widehat{h}_+$, defined as in \eqref{eq:renorm10}, but with $\widehat{\mathbb{P}}$ instead of $\mu$, is finite (in words, $\widehat{h}_+$ is smallest such that level sets of $\widehat{\varphi}$ have stretched exponential connectivity decay, for any height $h > \widehat{h}_+$). The proof is  identical to that of Theorem \ref{T:subcrit}, one merely needs to observe that $\widehat{\mathbb{P}}[\widehat{E}_{L_0,0}^{h_0}] \to 0$ as $h_0 \to \infty$, for any fixed value of $L_0 \in \mathbb{R}$.

Next, consider two sets $S$, $S'$ with
\begin{equation}
\label{eq:renorm22}
S=B(x_1, 10L), \ S' \subset B(x_2, 20L).
\end{equation}
We seek an upper bound for $\Sigma(S,S')$ uniform in $L\geq 1$. First note that for $\sigma^*(S,S')$ as defined in \eqref{eq:sigma_ref*}, one has, on account of \eqref{eq:GF_bounds} and \eqref{eq:renorm22}, with $x,y$ below ranging over all points in $B(x_1, 10L)$ and $B(x_2, 20L)$, respectively, using that $x\mapsto \frac{1+x}{1-x}$ is increasing in $[0,1)$ and that $\varepsilon \leq 1/2$,
\begin{equation}\label{eq:renorm23.6}
\sigma^* \leq c \frac{\sup_{x,y} |x-y|^{2-d}}{\inf_{x,y} |x-y|^{2-d}} \leq c \frac{ (|x_1-x_2| -c'L)^{2-d}}{(|x_1-x_2|+c'L)^{2-d}} \stackrel{\eqref{eq:renorm20}}{\leq} c \Big(\frac{1+ c'\varepsilon^K}{1-c'\varepsilon^K}\Big)^{d-2} \leq c''.
\end{equation}
Then, for suitable $\widehat{\Lambda} = \widehat{\Lambda}(S,x_2,L) \subset \subset \mathbb{Z}^d$ and all finite $\Lambda \supset \widehat{\Lambda}$, $x \in S$, and $\varphi \in \Omega_{\Lambda}^{\xi}, \xi \in \Omega$, one has, using last-exit decomposition for $P_x^*[H_{S'} < \infty]$, cf. above \eqref{eq:hit1}, and Corollary~\ref{C:exit_dist}, with $\Lambda' =\Lambda \setminus S$,
\begin{equation}
\label{eq:renorm23}
\begin{split}
\mathbf{P}_{x,\varphi}^{\mathcal{G}_{\Lambda'},\xi}[H_{S'} < H_{\Lambda^c} ] \underset{\eqref{eq:renorm23.6}}{\stackrel{\eqref{eq:exit_distr}}{\leq}}  c P_x^*[H_{S'} < \infty] &\leq c\cdot \text{dist}(S,S')^{-(d-2)}\text{cap}(B(x_2, 20L))\\
& \leq c' (\varepsilon^{-K}-50)^{-(d-2)}.
\end{split}
\end{equation}
Thus, for all $K$ sufficiently large, it follows from \eqref{eq:invisibility_cond}, \eqref{eq:renorm23}, in view of assumption \eqref{eq:renorm20}, that $ \sup_{\Lambda \supset \widehat{\Lambda}} \Sigma_{\Lambda}(S,S') \leq \frac{1}{10} \wedge c\varepsilon^{K(d-2)}$, for all $S,S'$ satisfying \eqref{eq:renorm22}, and we may thus arrange, by choosing $K$ large enough, that
\begin{equation}
\label{eq:renorm24}
M_{\varepsilon, \Sigma_{\Lambda}(S,S')} \stackrel{\eqref{eq:decM}}{=} \varepsilon(\Sigma_{\Lambda}^{-1}(S,S')+1) \geq c' \varepsilon^{1-K(d-2)} \geq \widehat{h}_+ + 1, \text { for all $ \varepsilon \in (0, 1/2)$, $\Lambda \supset \widehat{\Lambda} $}.
\end{equation}
Fix such $K$. With a slight abuse of notation, suppose henceforth that $\omega = (\varphi, \widetilde{\varphi})$ is distributed according to $Q_{\Lambda}^0 (= \mu_{\Lambda}^0 \otimes \mu_{\Lambda}^0)$, and assume that $\widehat{\Lambda}$ has been chosen large enough to satisfy $\widehat{\Lambda} \supset B(x_2, 100|x_2-x_1|)$. Let $\mathscr{C}_x$ denote the connected component of $x$ inside $\{ y \in \mathbb{Z}^d; {\varphi}_y - \widetilde{\varphi}_y \geq  \widehat{h}_+ + 1 \}$ (which might be empty), and define the random set
\begin{equation}
\label{eq:renorm25}
\mathscr{S}' = B(x_2,10L) \cup \bigcup_{x \in \partial_{\text{i}}B(x_2,10L) } \partial \mathscr{C}_x,
\end{equation}
(with the convention that $\partial \emptyset = \emptyset$). By definition of $\mathscr{S}' $, and for any $S'$ as in \eqref{eq:renorm22}, all $\Lambda \supset \widehat{\Lambda}$, on the event $\{\mathscr{S}' =  S' \}$, the field $\varphi - \widetilde{\varphi} $ satisfies $(\varphi - \widetilde{\varphi} )_{|_{\partial_{\text{i}}S'}} <  \widehat{h}_+ + 1 $, hence, with $\Lambda' = \Lambda \setminus S'$, 
$$
\mathbf{E}_{x, \varphi'}^{\mathcal{G}_{\Lambda'},\xi}[{\varphi}_{X_{\tau_{\Lambda'}}} - \widetilde{\varphi}_{X_{\tau_{\Lambda'}}}  |X_{\tau_{\Lambda'}} \in \partial_{\text{i}} S'] < \widehat{h}_+ + 1,
$$
for any $x \in S$, and $\varphi' \in \Omega_{\Lambda}^{\xi}, \xi \in \Omega$, and therefore, in view of \eqref{eq:renorm24}, \eqref{eq:dec15},
\begin{equation}
\label{eq:renorm26}
 \{ \mathscr{S}'  =  S'\}  \subset G_{\Lambda, \widehat{h}_+ + 1, S,S'},
\end{equation}
for any $S' \subset  B(x_2,20L)$, $\Lambda \supset \widehat{\Lambda}$. Applying Proposition \ref{L:monot}, we deduce from \eqref{eq:sprinkling} and \eqref{eq:renorm26} that $Z^h(\varphi_{S'})1_{\mathscr{S}'  =  S'} \leq Z^{h-\varepsilon}(\widetilde{\varphi}_{S'})1_{\mathscr{S}'  =  S'}$, with $Z^h(\cdot)$ as defined in \eqref{eq:dec10}. Thus, observing that $f(\varphi)\cdot 1_{\{ \mathscr{S}'  =  S'\}}$ is measurable with respect to ${F}_{S'} \equiv \sigma(\varphi_x, \widetilde{\varphi}_x, x \in S')$, cf. \eqref{eq:renorm25}, we obtain, for all $\Lambda \supset \widehat{\Lambda}$,
\begin{equation}
\label{eq:renorm27}
\begin{split}
&\mu_{\Lambda}^0(1_{A^h} \cdot f) = \mathbb{E}_{Q_{\Lambda}^0}(1_{A^h}(\varphi) \cdot f(\varphi)) \\
&\leq  \sum_{S' \subset B(x_2,20L)} \mathbb{E}_{Q_{\Lambda}^0}[\underbrace{\mathbb{E}_{Q_{\Lambda}^0}[1_{A^h}(\varphi)|\,{F}_{S'}]}_{= Z^h(\varphi_{S'}) }\cdot1_{\mathscr{S}'  =  S'} f(\varphi)] + \mathbb{E}_{Q_{\Lambda}^0}(f(\varphi) \cdot 1_{\mathscr{S}' \cap B(x_2, 20L)^c \neq \emptyset})\\
&\leq \mu_{\Lambda}^0 ({A^{h-\varepsilon}} ) \cdot \mu_{\Lambda}^0 ( f) + \Vert f \Vert_{\infty} \cdot Q_{\Lambda}^0(\mathscr{S}' \cap B(x_2, 20L)^c \neq \emptyset).
\end{split}
\end{equation}
Finally, \eqref{eq:renorm21} follows from \eqref{eq:renorm27} by taking $\Lambda \nearrow \mathbb{Z}^d$ along a suitable sequence, and observing that, by definition of $\widehat{h}_+$, cf. the discussion following \eqref{eq:phi_hat},
\begin{equation*}
\begin{split}
&Q_{\Lambda}^0(\mathscr{S}' \cap B(x_2, 20L)^c \neq \emptyset) \\
&\to \widehat{\mathbb{P}}[B(x_2,10L) \leftrightarrow \partial_{\text{i}}B(x_2,20L) \text{ inside } \{ y \in \mathbb{Z}^d; \widehat{\varphi}_y \geq \widehat{h}_+ +1 \}] \leq ce^{-L^{\alpha}},
\end{split}
\end{equation*}
for all $L \geq 1$ and suitable $\alpha >0$.
\end{proof}
With Theorem \ref{T:dec_ineq2} at hand, we have the necessary tool to apply the recent results of \cite{DRS14.3}, \cite{DRS14.2}, \cite{PRS15} and \cite{Sa14}, which allow for correlated percolation fields with suitably quantified correlations. Their common feature is their reliance on a certain set of assumptions for the occupation field, called \textbf{P1}-\textbf{P3} and \textbf{S1}-\textbf{S2}, which can be found for instance in \cite{PRS15}, p.2. Among these, crucial condition \textbf{P3}, which quantifies the correlations, will follow from Theorem \ref{T:dec_ineq2} above.

Let $\tau_z: \Omega \to \Omega$, $(\tau_z \omega)(\cdot) = \omega(\cdot +z)$, for $\omega \in \Omega$, $z \in \mathbb{Z}^d$, denote the canonical shifts. From now on until the end of this Section, we assume that
\begin{equation}\label{eq:shift_inv}
\mu \in \mathscr{W} \text{ satisfies $\mu(\tau_z^{-1}(A))=\mu(A)$, for all $A \in \mathcal{F}$}.
\end{equation}
Note that shift-invariance of $\mu$ does not seem a-priori clear. Moreover, we assume for simplicity that 
\begin{equation}\label{eq:2bodycondholds}
\begin{split}
\text{$V=\{ V_X\}_X$ is given by the two-body potentials of \eqref{eq:pot2body}.}
\end{split}
\end{equation}
(this is not necessary, but simplifies certain duality arguments we are about to make). For $h \in \mathbb{R}$ and $r \in [1,\infty]$, we denote by $\mathcal{S}^{\geqslant h}_r$ the random set consisting of all sites of $E^{\geqslant h}$ in connected components of $\ell^1$-diameter at least $r$. We say that $h \in \mathbb{R}$ is strongly supercritical for $\mu$ if there exists $\Delta(h) > 0$ such that
\begin{equation}
\label{eq:renorm28}
\mu(\mathcal{S}^{\geqslant h}_L \cap B(0,L) = \emptyset) \leq e^{-(\log L)^{1+\Delta(h)}}\\
\end{equation}
and
\begin{equation}
\label{eq:renorm29}
\begin{split}
\mu\big[&\text{there exist components in $\mathcal{S}^{\geqslant h}_{L/10} \cap B(0,L)$, which} \\
&\text{are not connected in $E^{\geqslant h} \cap B(0,2L)$}\big] \leq e^{-(\log L)^{1+\Delta(h)}}
\end{split}
\end{equation}
and the critical parameter
\begin{equation}
\label{eq:renorm30}
h_- = \sup\{ h \in \mathbb{R}; h \text{ is strongly supercritical for $\mu$} \}
\end{equation}
(with the convention $\sup \emptyset = - \infty$). 

\begin{remark} \label{R:h_-}1) It is not hard to see, using \eqref{eq:shift_inv}, \eqref{eq:renorm28} and \eqref{eq:renorm29}, that $\mathcal{S}^{\geqslant h}_\infty$ is non-empty and connected with probability $1$, whenever $h$ is strongly supercritical for $\mu$. In particular, this implies that $ h_- \leq h_+ (< \infty)$, using \eqref{eq:renorm10}. Moreover, using the decoupling inequality \eqref{eq:renorm8}, adapting the arguments in the proof of Theorem \ref{T:subcrit}, one can show that for all sufficiently small $h$, with $A^{<h}_*(x)$ denoting the event that $0$ is connected to $x$ by a $*$-nearest-neighbor path of vertices in $\{y \in \mathbb{Z}^d; \varphi_y < h\}$, the probability $\mu (A^{<h}_*(x))$ decays stretched exponentially in $|x|$. Along with standard duality arguments, this readily yields that such $h$ is in fact strongly supercritical for $\mu$, and $h_- < -\infty$ follows. \\
\noindent 2) By the preceding remark, one has the string of inequalities $-\infty < h_- \leq h_* \leq h_+ < \infty$, where $h_*$ denotes the critical parameter for percolation of $E^{\geqslant h}$. It is an open question to determine whether $h_- =h_* =h_+$. \hfill $\square$
\end{remark}
In what follows we tacitly view $E^{\geqslant h}$ as a graph, by drawing an edge between any two vertices $x,y \in E^{\geqslant h}$ which are neighbors in $\mathbb{Z}^d$, and denote by $\rho(\cdot, \cdot)$ the graph distance on $E^{\geqslant h}$ (with the convention that $\rho(x, y)= \infty$ if $x$, $y$ belong to different connected components of $E^{\geqslant h}$). Theorem \ref{T:dec_ineq2} has many applications, see Remark \ref{R:moreresults} below, but we highlight the following two results.

\begin{thm} $($Chemical distance$)$.
\label{T:chem_dist}

\medskip
\noindent For all $h < h_-$, there exist $c(h), c'(h), c''(h) \in (0,\infty)$ such that
\begin{equation}
\label{eq:chem_dist}
\mu\bigg[ \bigcap_{x,y \in \mathcal{S}_L^{\geqslant h} \cap B(0,L)} \{ \rho(x,y) \leq c L \} \bigg]  \geq 1-c' e^{-c''(\log L)^{1+\Delta(h)}}.
 \end{equation} 
\end{thm}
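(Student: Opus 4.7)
The plan is to verify that, under \eqref{eq:shift_inv} and for any $h < h_-$, the level-set indicator field $(1\{\varphi_x \geq h\})_{x \in \mathbb{Z}^d}$ under $\mu$ satisfies the set of axioms \textbf{P1}--\textbf{P3} and \textbf{S1}--\textbf{S2} on which the results of \cite{PRS15} (and \cite{DRS14.3}) rest, whereupon \eqref{eq:chem_dist} follows as a direct application of the chemical-distance theorem proved therein. The role of Theorem \ref{T:dec_ineq2} is precisely to supply the central decoupling assumption \textbf{P3}.

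The easy verifications are as follows. Condition \textbf{P1} (translation invariance) is our standing assumption \eqref{eq:shift_inv}. Condition \textbf{P2} (FKG for increasing local events) follows from the Helffer--Sj\"ostrand formula \eqref{eq:HS2}: for increasing $F,G \in \textnormal{Dom}(L_\Lambda^\xi)$, all partial derivatives $\partial F, \partial G$ are non-negative, so the right-hand side of \eqref{eq:HS2} is a sum of non-negative terms, yielding $\textnormal{Cov}_{\mu_{\Lambda_n}^0}(F,G) \geq 0$, which is inherited by $\mu$ upon taking $n \to \infty$ and a standard smoothing/approximation of indicators of increasing cylinder events. Conditions \textbf{S1} and \textbf{S2} are precisely \eqref{eq:renorm28} and \eqref{eq:renorm29}, and hold at any $h < h_-$ by the definition \eqref{eq:renorm30}.

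The key step is the verification of the decoupling condition \textbf{P3}, which asks for a product-type upper bound on $\mu(A_1^h \cap A_2^h)$ in terms of $\mu(A_1^{h-\varepsilon})\mu(A_2^{h-\varepsilon})$ for two local increasing events $A_1, A_2$ supported on disjoint boxes, with an error that decays rapidly in the separation scale. This is exactly what Theorem \ref{T:dec_ineq2} supplies: taking $S = B(x_1, 10L)$ and $f = 1_{A_2^h} \leq 1_{A_2^{h-\varepsilon}}$ in \eqref{eq:renorm21}, one obtains
\[
\mu(A_1^h \cap A_2^h) \leq \mu(A_1^{h-\varepsilon}) \, \mu(A_2^{h-\varepsilon}) + c_{23}\,\Vert f \Vert_\infty \, e^{-L^\alpha},
\]
valid whenever $|x_1-x_2|\geq \varepsilon^{-K}L$. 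The symmetric statement for decreasing events at shifted height $h + \varepsilon$, needed for the full formulation of \textbf{P3}, is furnished by \eqref{eq:dec_ineq_bis}.

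The main obstacle is not conceptual but consists in matching the precise functional form of \eqref{eq:renorm21} with the quantitative formulation of \textbf{P3} in \cite{PRS15} (in particular the allowed decay rate of the error and the permitted polynomial scaling $|x_1-x_2|\geq \varepsilon^{-K}L$). Since that formulation accommodates any polynomial trade-off between sprinkling and separation together with stretched-exponential decay in the local scale, the matching works without substantive additional work. Once \textbf{P1}--\textbf{P3}, \textbf{S1}--\textbf{S2} are in place, \eqref{eq:chem_dist} follows directly from the main chemical-distance theorem of \cite{PRS15}, whose proof implements a multi-scale renormalization argument that uses \textbf{P3} to decouple candidate sub-paths at each scale while \textbf{S1}, \textbf{S2} ensure the existence and near-unique crossing of large clusters at each intermediate scale; the constants $c(h), c'(h), c''(h)$ in \eqref{eq:chem_dist} are then inherited from there.
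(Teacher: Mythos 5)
Your overall strategy is the one the paper follows: reduce \eqref{eq:chem_dist} to the axioms \textbf{P1}--\textbf{P3}, \textbf{S1}--\textbf{S2} of \cite{DRS14.2}, \cite{PRS15}, \cite{Sa14}, with Theorem \ref{T:dec_ineq2} (plus Remark \ref{R:dec_ineq1}, 4), i.e.\ \eqref{eq:dec_ineq_bis}) supplying the decoupling condition \textbf{P3}, and then quote the chemical-distance theorem (in the paper, Theorem 2.3 of \cite{DRS14.2}; \cite{PRS15} is used for the invariance principle). However, there is a gap in your verification of the hypotheses, because you have substituted your own guesses for what \textbf{P1}, \textbf{P2} say. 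In that framework the axioms are imposed on the one-parameter family $(\mathbb{P}^u)_{u\in(u_0,\infty)}$, where $\mathbb{P}^u$ is the law of $E^{\geqslant h_- - u}$ under $\mu$: \textbf{P1} is \emph{ergodicity} of each $\mathbb{P}^u$ under lattice shifts (not mere translation invariance \eqref{eq:shift_inv}), \textbf{P2} is monotonicity of $u\mapsto\mathbb{P}^u(G)$ for increasing $G$ together with continuity and positivity of $u\mapsto\mu(0\in\mathcal{S}^{\geqslant h_- - u}_\infty)$; an FKG inequality is not among the hypotheses, so your Helffer--Sj\"ostrand positive-association argument, while correct, verifies a condition that is not needed, and the conditions that \emph{are} needed (ergodicity, and the continuity of the density, which the paper obtains by the Bernoulli-type argument of Theorem (8.8) in \cite{Gr99}, with positivity from Remark \ref{R:h_-}) are not addressed. \textbf{S1} is indeed local uniqueness, i.e.\ \eqref{eq:renorm28}--\eqref{eq:renorm29} via \eqref{eq:renorm30}, as you say.

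The second, more quantitative gap is the matching of \eqref{eq:renorm21} to the precise form of \textbf{P3}, which you declare to "work without substantive additional work." The condition in \cite{Sa14}, Section 1.1, is stated with a \emph{multiplicative} sprinkling $\hat u\leq u/(1+R^{-\chi_P})$ in the parameter $u$ and a separation $|x_1-x_2|\geq RL$ with $R\geq R_P$, whereas Theorem \ref{T:dec_ineq2} gives an \emph{additive} sprinkling $\varepsilon$ under $|x_1-x_2|\geq \varepsilon^{-K}L$. The paper closes this by an explicit choice of parameters: $R=\varepsilon^{-K}$, $R_P=(4u_0^{-2})^K$, $\chi_P=(2K)^{-1}$, $f_P(L)=L^\alpha-\log c_{23}$ and a suitable $L_P$, and then checks that $\hat u\leq u/(1+R^{-\chi_P})$ forces $u-\hat u\geq u_0\sqrt{\varepsilon}/2\geq\varepsilon$ for $\sqrt{\varepsilon}\leq u_0/2$, so that $\mu(A_1^{h-\varepsilon})\leq\mathbb{P}^u[A_1]$. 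This bookkeeping is where the hypothesis $h<h_-$ (i.e.\ $u>u_0$) and the exponent $K$ from \eqref{eq:renorm20} actually enter, and it should be carried out rather than asserted; with it, and with the correct list of axioms verified, your argument becomes the paper's proof.
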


The next theorem concerns simple random walk on the percolation cluster $\mathcal{S}_{\infty}^{\geqslant h}$, in the regime $h < h_-$ of strong supercriticality. We endow $\mathcal{S}_{\infty}^{\geqslant h}$ with edge weights
$$
\mu_{x,y}= \begin{cases}
1, & \text{if } x,y \in \mathcal{S}_{\infty}^{\geqslant h}, x\sim y\\
0,& \text{else} 
\end{cases}
\qquad \qquad \mu_x = \sum_y \mu_{x,y}  
$$
and let $X= (X_n)_{n\geq 0}$, resp. $Y= (Y_t)_{t\geq 0}$ be the discrete-time, resp. (constant speed) continuous-time simple random walk on $\mathcal{S}_{\infty}^{\geqslant h}$. That is, $X$ is the Markov chain on $\mathcal{S}_{\infty}^{\geqslant h}$ with transition probabilities $\mu_{x,y}/\mu_x$, and $Y$ is the jump process on $\mathcal{S}_{\infty}^{\geqslant h}$ with generator $\mathscr{L}f(x)= \sum_y \frac{\mu_{x,y}}{\mu_x}(f(y)-f(x))$. The (quenched) laws of $X$ and $Y$ with starting point at $x$ will be denoted by $Q_{\mathcal{S}_{\infty}^{\geqslant h}, x}^X$, resp. $Q_{\mathcal{S}_{\infty}^{\geqslant h}, x}^Y$. 

\begin{thm}
\label{T:RW_perc} For all $h < h_-$, the following hold:\\
\noindent 1) (Quenched invariance principle). For any $T \in( 0, \infty)$, the law of $(B_n(t))_{0 \leq  T }$, with $B_n(t)\stackrel{\text{def.}}{=} n^{-1/2}\{X_{\lfloor tn \rfloor} + (tn-\lfloor tn \rfloor)(X_{\lfloor tn \rfloor+1}- X_{\lfloor tn \rfloor})\}$ on $(C[0,T], \mathcal{F}_T)$, the space of continuous functions from $[0,T]$ to $\mathbb{R}^d$, endowed with its canonical $\sigma$-algebra, under $Q_{\mathcal{S}_{\infty}^{\geqslant h}, x}^X$, converges weakly to the law of an isotropic Brownian motion with zero drift and positive diffusion constant.\\
\noindent 2) (Quenched heat kernel estimates). There exist random variables $(T_x(\varphi))_{x \in \mathbb{Z}^d}$ satisfying
$T_x < \infty$, $\mu(\cdot| 0 \in  \mathcal{S}_{\infty}^{\geqslant h})$-a.s, with tails $\mu (T_x \geq r)\leq c(h)e^{-c'(h)(\log r)^{1+ \Delta(h)}}$, $x \in \mathbb{Z}^d$, such that, $\mu(\cdot| 0 \in  \mathcal{S}_{\infty}^{\geqslant h})$-a.s, for all $x, y \in   \mathcal{S}_{\infty}^{\geqslant h} $ and $t \geq T_x$,
\begin{equation}
\begin{split} \label{eq:HKB}
&q_t(x,y)\leq c(h)t^{-d/2}e^{-c'(h)\frac{\rho(x,y)^2}{t}}, \text{ for all $t \geq \rho(x,y)$}\\
&q_t(x,y)\geq c''(h)t^{-d/2}e^{-c'''(h)\frac{\rho(x,y)^2}{t}}, \text{ for all $t \geq \rho(x,y)^{3/2}$}
\end{split}
\end{equation}
where $q_t(x,y)$ stands for either $Q_{\mathcal{S}_{\infty}^{\geqslant h}, x}^Y[Y_t=y]/\mu_y$ or $p_{\lfloor t\rfloor}(x,y) + p_{\lfloor t\rfloor+1}(x,y)$, with $p_n(x,y)= Q_{\mathcal{S}_{\infty}^{\geqslant h}, x}^X[X_n=y]$, $n \geq 0$.
\end{thm}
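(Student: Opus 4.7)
\textit{Strategy.} The plan is to invoke the general frameworks developed in \cite{PRS15} and \cite{Sa14}, which produce quenched invariance principles and quenched heat-kernel estimates for simple random walk on the infinite cluster of a translation-invariant random set $\mathcal{S}\subset\mathbb{Z}^d$ whose law satisfies a standard package of axioms (the conditions \textbf{P1}--\textbf{P3}, \textbf{S1}--\textbf{S2} recalled in \cite{PRS15}). The entire proof therefore reduces to verifying that, for $\mathcal{S} = \mathcal{S}_\infty^{\geqslant h}$ under $\mu \in \mathscr{W}$ with $h < h_-$, each of these conditions is met; once this is done, Part~1 is the main result of \cite{PRS15} and Part~2 is the main result of \cite{Sa14}, applied verbatim.

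\textit{Verification of the axioms.} Condition \textbf{P1} (translation invariance) is precisely \eqref{eq:shift_inv}. Condition \textbf{P2} (FKG-type monotonicity and increasing/decreasing event structure) follows from the fact that $x\in E^{\geqslant h}$ is an increasing function of $\varphi$, together with the FKG inequality for $\mu$: in view of \eqref{eq:2bodycondholds}, the rates $a_{x,y}^{\xi}(\varphi)$ in \eqref{eq:thea} are non-negative, so the right-hand side of the Helffer--Sj\"ostrand covariance formula \eqref{eq:HS2} is non-negative for $F,G$ increasing. Conditions \textbf{S1}--\textbf{S2} are literally \eqref{eq:renorm28}--\eqref{eq:renorm29}, which hold for any $h<h_-$ by definition of $h_-$, cf.\ \eqref{eq:renorm30}. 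The only non-trivial axiom is \textbf{P3} (decoupling with algebraic separation in a sprinkling parameter and stretched-exponential error), and this is essentially the content of Theorem~\ref{T:dec_ineq2}: applied with $\varepsilon$ chosen as a small negative power of the ratio of scales $|x_1-x_2|/L$, together with its counterpart for decreasing events (obtained by the symmetry argument of Remark~\ref{R:dec_ineq1}, 4)), it yields precisely the algebraic-sprinkling/fast-error decoupling demanded in \cite{PRS15}, \cite{Sa14}. Translation-ergodicity of $\mu$, which is needed to make sense of the infinite cluster statements, is deduced from \textbf{P3} by the standard mixing argument: the decoupling \eqref{eq:renorm21} implies that local functions become asymptotically independent under translations along coordinate directions, hence $\mathbb{Z}^d$-ergodicity of $\mu$.

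\textit{Application.} With \textbf{P1}--\textbf{P3} and \textbf{S1}--\textbf{S2} verified, Part~1 follows by direct citation of the main theorem of \cite{PRS15}, which produces the quenched invariance principle with zero drift and a non-degenerate diffusion matrix; the latter is isotropic in our setup by the additional symmetries of $\mu$ under lattice reflections and coordinate permutations, which are inherited from $\{V_X\}_X$ through \eqref{eq:sym}, \eqref{eq:trans_inv} and \eqref{eq:2bodycondholds}. Part~2 follows by direct citation of the main theorem of \cite{Sa14}, which furnishes the random scales $T_x$ with tails of order $e^{-c(\log r)^{1+\Delta(h)}}$ and the Gaussian-type heat-kernel bounds \eqref{eq:HKB} for both the discrete- and continuous-time simple random walks on $\mathcal{S}_\infty^{\geqslant h}$ at times $t \geq T_x$.

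\textit{Main obstacle.} The real work is the verification of \textbf{P3}, and in particular the bookkeeping that translates Theorem~\ref{T:dec_ineq2}, stated with a sprinkling $\varepsilon\in(0,1/2)$ and a spatial separation $|x_1-x_2|\geq \varepsilon^{-K}L$, into the precise parametric form of the decoupling hypothesis used in \cite{PRS15} and \cite{Sa14}. This conversion is routine but must be carried out carefully to ensure that both the sprinkling exponent and the error exponent $\alpha$ in \eqref{eq:renorm21} meet the thresholds assumed in those papers. A secondary, minor point is the derivation of $\mathbb{Z}^d$-ergodicity of $\mu$ from the decoupling, which is standard but needs to be mentioned because \eqref{eq:shift_inv} only gives translation invariance.
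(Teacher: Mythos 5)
Your proposal is correct and takes essentially the same route as the paper: both reduce the theorem to the general frameworks of \cite{PRS15} and \cite{Sa14} by verifying \textbf{S1}--\textbf{S2} and \textbf{P1}--\textbf{P3} for the family of laws of $E^{\geqslant h_- - u}$, with \textbf{P3} extracted from Theorem \ref{T:dec_ineq2} (and its decreasing-event companion from Remark \ref{R:dec_ineq1}, 4)), and the paper does carry out explicitly the parameter bookkeeping ($R_P$, $\chi_P$, $f_P$) that you flag as the main remaining work. Two small corrections: condition \textbf{P2} in those references is monotonicity of $u \mapsto \mathbb{P}^u(G)$ for increasing $G$, which is immediate from the nesting $E^{\geqslant h_- - u} \supset E^{\geqslant h_- - u'}$ for $u > u'$ rather than from FKG; and you omit the required continuity of $u \mapsto \mu(0 \in \mathcal{S}_\infty^{\geqslant h_- - u})$, which the paper handles by the standard Bernoulli-percolation argument of \cite{Gr99}, Theorem (8.8).
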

\textit{Proof of Theorems \ref{T:chem_dist} and \ref{T:RW_perc}.} Theorem \ref{T:chem_dist} follows from Theorem 2.3 in \cite{DRS14.2}, Theorem \ref{T:RW_perc}, 1) from Theorem 1.1 in \cite{PRS15} and 2) from Theorem 1.6 in \cite{Sa14}, provided the conditions \textbf{S1}-\textbf{S2} and \textbf{P1}-\textbf{P3} appearing, for instance, in Section 1.1 of \cite{Sa14}, can be verified for the family $(\mathbb{P}^u)_{u \in (u_0,\infty)}$, for (fixed, but arbitrarily small) $u_0 \in (0,1)$, where $\mathbb{P}^u$ is defined as the law of $E^{\geqslant h_- - u}$ under ${\mu}$.

Condition \textbf{S1} (local uniqueness) follows immediately from the definition of $h_-$ in \eqref{eq:renorm28}-\eqref{eq:renorm30}. Property $\mathbf{P2}$ requires the function $u\in (u_0,\infty) \mapsto \mu( 0 \in \mathcal{S}_{\infty}^{\geqslant h_- -u})$ to be continuous and positive. Positivity is immediate, cf. Remark \ref{R:h_-}, 1) and continuity follows from the same argument as in the Bernoulli case, cf. the proof of Theorem (8.8) in \cite{Gr99}. Conditions \textbf{P1} (ergodicity of $\mathbb{P}^u$ under lattice shifts) and \textbf{P2} (monotonicity of $u \mapsto \mathbb{P}^u(G)$, for increasing, measurable $G\subset \widehat{\Omega}$) follow immediately from the ergodicity of $\mu$ and the fact that $E^{\geqslant h_- - u} \supset E^{\geqslant h_- - u'}$, for $u> u'$. Finally, the crucial property \textbf{P3} (decoupling), requires that 
\begin{equation*}
\begin{split}
&\text{there exist integers $R_{P}, L_{P} \in( 1, \infty) $, and $\varepsilon_{P}, \chi_{P} \in (0,1)$, along with a function}\\
&\text{$f_{P}: \mathbb{Z}_+\to \mathbb{R}$ satisfying $f_{P}(L) \geq e^{(\log L)^{\varepsilon_{P}}}$, for all $L \geq L_{P}$, such that, for all $L \geq 1$,}\\
&\text{and $x_1,x_2 \in \mathbb{Z}^d$ with $|x_1-x_2| \geq R L$, all increasing events $A_i \in \sigma(Y_x , x\in B(x_i,10L))$,}\\
&\text{decreasing events $B_i \in \sigma(Y_x , x\in B(x_i,10L))$, $i=1,2$, all $\hat{u},u \in (u_0,\infty)$ with}\\
&\text{$\hat{u}\leq  u/(1+R^{-\chi_{P}})$, and all $R \geq R_{P},$}\\[0.2em]
&\qquad \mathbb{P}^{\hat{u}}[A_1 \cap A_2] \leq \mathbb{P}^u[A_1]  \mathbb{P}^u[A_2]+e^{-f_{P}(L)}, \text{ and } \mathbb{P}^{{u}}[B_1 \cap B_2] \leq \mathbb{P}^{\hat{u}}[B_1]   \mathbb{P}^{\hat{u}}[B_2]+e^{-f_{P}(L)}. 
\end{split}
\end{equation*} 
This follows from Theorem \ref{T:dec_ineq2} and Remark \ref{R:dec_ineq1}, upon letting $h= h_- -\hat{u}$, for $\hat{u}\in (u_0,\infty)$, $A\equiv A_1$, $f=1_{A_2^{h_{-}-\hat{u}}}$, with $A_1$, $A_2$ as above, setting $R=\varepsilon^{-K}$, letting $\varepsilon$ vary in $(0,(\frac{u_0}{2})^2]$ and definind $R_{P}= R_{P}(u_0)=(4 u_0^{-2})^K$ (so that $R \geq R_{P}$ for all $\varepsilon \leq (\frac{u_0}{2})^2$), choosing $L_{P}$ sufficiently large such that $f_{P}(L)= L^{\alpha}-\log(c_{23})\geq L$, which satisfies $f_{P}(L)> e^{(\log L)^{1/2}}$, for all $L \geq L_{P}$ and suitable $L_P \geq 1$, and setting $\chi_{P}= (2K)^{-1}$, which yields, for all $\hat{u},u \in (u_0,\infty)$ with $\hat{u}\leq  u/(1+R^{-\chi_{P}})$,
 $$
 u- \hat{u}\geq \frac{uR^{-\chi_{P}}}{1+ R^{-\chi_{P}}} \geq u_0 \frac{\sqrt{\varepsilon}}{2} \geq \varepsilon, \text{ whenever $\sqrt{\varepsilon} \leq \frac{u_0}{2}$}
 $$ 
and therefore $h-\varepsilon =  h_- -\hat{u}-\varepsilon \geq h_- - u$, so that $\mu({A_1^{h-\varepsilon}} ) \leq \mu({A_1^{ h_- - u}} )= \mathbb{P}^u[A_1]$, as desired. \hfill $\square$

\begin{remark}\label{R:moreresults} 1) (Shape Theorem)
By standard methods, \eqref{eq:chem_dist} is known to imply that, upon suitable rescaling, large balls in $\mathcal{S}^{\geqslant h}_{\infty}$ (endowed with the metric $\rho$) have an asymptotically deterministic shape; for a precise statement, see for instance Theorem 2.5 in \cite{DRS14.2}.\\
\nolinebreak 2) The heat kernel bounds in Theorem \ref{T:RW_perc} continue to hold if one replaces $\rho$ by the usual $\ell^1$-graph distance on $\mathbb{Z}^d$, and the exponent $3/2$ in \eqref{eq:HKB} can be replaced by $1+ \varepsilon$, for any $\varepsilon > 0$ (with $T_x$ and all constants depending on $\varepsilon$). Further results which follow directly from the fact that the law of $\mathcal{S}_{\infty}^{\geqslant h}$, for $h<h_- $, satisfies the conditions \textbf{S1}-\textbf{S2} and \textbf{P1}-\textbf{P3}, include (quenched) Harnack inequalities on $\mathcal{S}_{\infty}^{\geqslant h}$ and bounds on gradients of the heat kernel, for all $h< h_-$, see \cite{Sa14} Section 1.3. \hfill $\square$.

\end{remark}

\section{Non-convex perturbations} \label{S:nonconvex}

We now explain how to apply our sprinkling technique to a certain class of non-convex potentials. As will soon become clear, most of the necessary work is implicitly contained in the framework chosen in Section \ref{S:1}, see in particular \eqref{eq:H_pots} and \eqref{eq:smooth}-\eqref{eq:elliptic}. Specifically, we consider non-convex modifications of a uniformly convex two-body potential, see \eqref{eq:nonconvex1} and \eqref{eq:noncon2} for precise definitions, and show an analogue of Theorem \ref{T:subcrit} for such Hamiltonians, at sufficiently high temperature, cf. Theorem \ref{T:noncon} below. Accordingly, in what follows, $\beta$, the inverse temperature, will play the role of a perturbative parameter.

Our starting point is ${\mathcal{G}}= (\mathbb{Z}^d, {\mathscr{E}})$, where ${\mathscr{E}}$ refers from now on to the usual nearest-neighbor edge structure. Recall $ {\mathscr{E}}_{\Lambda}$ stands for the set of edges having at least one endpoint in $\Lambda$. We consider the family of measures ${\mu}_{\Lambda,\beta}^0$, for $\beta > 0$, $\Lambda \subset\subset \mathbb{Z}^d$, defined as in \eqref{eq:H_xi}, \eqref{eq:mu_Lambda}, with 
\begin{equation}\label{eq:nonconvex1}
{H}_{\Lambda}(\varphi) = \frac12 \sum_{e\in {\mathscr{E}}_{\Lambda}} V(\nabla \varphi (e)),
\end{equation}
where $V:\mathbb{R}\to \mathbb{R}$ is of the form
\begin{equation}\label{eq:noncon2.0}
V=U+g,
\end{equation}
and 
\begin{equation}\label{eq:noncon2}
\begin{split}
&U,g\in C^{2,\alpha}(\mathbb{R},\mathbb{R}_+), \text{ for some }\alpha>0, \, U, g \text{ are even functions},\\
&c^{-1}\leq U'' \leq c, \, \text{supp}(g)\subset \mathbb{R} \text{ is compact, and } V(\eta) \geq A\eta^2 -B, \eta \in \mathbb{R},
\end{split}
\end{equation}
for some $c\in (0,1)$ and $A>0$, $B \in \mathbb{R}$. 

We now consider the restriction of ${\mu}_{\Lambda,\beta}^0$ to the ``even'' field, to which we will eventually apply the results of Section \ref{S:sprinkle}. Let ${\mathbb{Z}}_e^d= \{ x \in \mathbb{Z}^d; \sum_{1\leq i \leq d}x_i \text{ is even} \}$, be the even sublattice, ${\mathbb{Z}}_o^d= \mathbb{Z}^d \setminus {\mathbb{Z}}_e^d$. We write $\widetilde{\Gamma}=\{ x\in \mathbb{Z}^d; |x|_1 =2 \}$, a subset of ${\mathbb{Z}}_e^d$, and consider the graph $\widetilde{\mathcal{G}}= ({\mathbb{Z}}_e^d, \widetilde{\mathscr{E}})$, whereby $(x,y)\in \widetilde{\mathscr{E}}$ if $x-y \in \widetilde{\Gamma}$. For $\Lambda \subset \mathbb{Z}^d$, we write $\Lambda_e = \Lambda \cap {\mathbb{Z}}_e^d$, $\Lambda_{\text{o}}= \Lambda \setminus \Lambda_e$, and $\tilde{\partial}K$ denotes the outer vertex boundary of $K \subset {\mathbb{Z}}_e^d$ in $\widetilde{\mathcal{G}}$, i.e. $x \in \tilde{\partial}K$ if and only if $x -y \in \widetilde{\Gamma}$ for some $y \in K$.

To keep the exposition simple, we will consider sets $\Lambda \in F^*$, where $F^*$ consists of all finite subsets $\Lambda$ of $\mathbb{Z}^d$ with the property that, if $x \in \Lambda_e$ and $y$ is a neighbor of $x$ in $\mathcal{G}$, i.e. $|y-x|_1=1$, then $x \in \Lambda$ (dropping this assumption on $F^*$ would result in additional one-body potentials in the representation below). Note that, for $\Lambda \in F^*$, we have $\partial \Lambda = \tilde{\partial}\Lambda_e$. As in \eqref{eq:W}, we consider the set ${\mathscr{W}}_{\beta} (V)$ of weak limits of measures ${\mu}_{{\Lambda_n},\beta}^0$, as $\Lambda_n \nearrow \mathbb{Z}^d$, with the additional requirement that $\Lambda_n \in F^*$ for all $n \geq 0$. We will soon argue that ${\mathscr{W}}_{\beta} (V)$ is non-empty at high temperatures, see Lemma \ref{L:tightnoncon} below.

We define a probability measure $\tilde{\mu}_{{\Lambda_e}, \beta}^0$ on $(\Omega_{\Lambda_e }, \mathcal{F}_{\Lambda_e}$), for $\beta > 0$ and $\Lambda \in  F^*$, as
\begin{equation}\label{eq:noncon3.0}
d \tilde{\mu}_{\Lambda_e, \beta}^0 ((\varphi_x)_{x\in \Lambda_e}) = \frac{1}{ \widetilde{Z}_{\Lambda_e, \beta}^{0} } e^{ -  \widetilde{H}_{\Lambda_e, \beta}(\varphi_e)} \prod_{x \in \Lambda_e} d\varphi_x \prod_{y \in \mathbb{Z}_e^d\setminus \Lambda_e}\delta_0(\varphi_y),
\end{equation}
where $\varphi_e=(\varphi_x)_{x \in \mathbb{Z}_e^d}$, 
\begin{equation}\label{eq:noncon3}
\widetilde{H}_{\Lambda_e, \beta}(\varphi_e) =\sum_{x \in\Lambda_e\cup \tilde{\partial}\Lambda_e} \bigg[\frac{1}{2d} \sum_{y: |y-x|_1=1} \widetilde{V}_{\beta}((\varphi_{y+e_i} - \varphi_x)_{i \in \mathbb{U}}) \bigg]
\end{equation}
with $\mathbb{U}=\{-d,\dots,d \}$, $e_i$ the standard unit vectors in $\mathbb{Z}^d$, $e_{-i}=-e_i$, for $1\leq i \leq d$, and
\begin{equation}\label{eq:noncon4}
\widetilde{V}_{\beta}((\eta_i)_{i \in \mathbb{U}}) = -\log \int_{\mathbb{R}}e^{-\beta\sum_{i \in \mathbb{U}} V(\eta_i - s)} ds.
\end{equation}
The assumption \eqref{eq:noncon2} guarantees that the normalizing factor $\widetilde{Z}_{\Lambda_e, \beta}^{0} $ in \eqref{eq:noncon3.0} is finite. Moreover, the term in brackets in \eqref{eq:noncon3} is a function of the (even) gradients $(\varphi_{x+z}-\varphi_x)_{z \in \widetilde{\Gamma}}$. Hence, by associating to $x \in \Lambda_e\cup \tilde{\partial}\Lambda_e$ the set $X=\{ x\} \cup \{ z \in \mathbb{Z}_e^d; \, z-x \in \widetilde{\Gamma} \} \equiv B_{\widetilde{\mathcal{G}}}(x,1)$, the unit ball around $x$ in $\widetilde{\mathcal{G}}$, one sees that $\widetilde{H}_{\Lambda_e, \beta}(\varphi_e)$ is a gradient Hamiltonian of the form \eqref{eq:H_pots}, with
\begin{equation}\label{eq:noncon4.0}
V_X((\varphi_{x+z}-\varphi_x)_{z\in \widetilde{\Gamma}}) = \frac{1}{2d} \sum_{y: |y-x|_1=1} \widetilde{V}_{\beta}((\varphi_{y+e_i} - \varphi_x)_{i \in \mathbb{U}}), \text{ if }X= B_{\widetilde{\mathcal{G}}}(x,1) 
\end{equation}
(and $\mathscr{E}(X)= \{ e \in \widetilde{\mathscr{E}}; \, x(e)= x\}$; we are tacitly identifying the vertex set $\mathbb{Z}_e^d$ with a copy of $\mathbb{Z}^d$). However, as opposed to the potential $V$ entering in \eqref{eq:nonconvex1}, $V_X$ is no longer a two-body interaction, cf. \eqref{eq:noncon4}. The key features of the measure $\tilde{\mu}_{{\Lambda_e}, \beta}^0$ are summarized in the following lemma.

\begin{lem} $(\Lambda \in F^*, \text{$V$ as in \eqref{eq:noncon2.0}, \eqref{eq:noncon2}})$ \label{L:noncon6}

\medskip
\noindent (i) For any $A \in \mathcal{F}_{\Lambda_e} =\sigma(\varphi_x; \, x \in \Lambda_e)$ and $\beta > 0$,
\begin{equation}\label{eq:noncon5}
{\mu}_{\Lambda,\beta}^0 (A)= \tilde{\mu}_{{\Lambda_e}, \beta}^0(A).
\end{equation}
(ii) The function $V_X$ in \eqref{eq:noncon4.0} satisfies \eqref{eq:smooth}, \eqref{eq:sym} and \eqref{eq:trans_inv}. Moreover, there exists $c_{24} > 0$ such that, if $\sqrt{\beta}\Vert g'' \Vert_{L^1(\mathbb{R})}< c_{24}$, then $V_X$ also satisfies \eqref{eq:elliptic}, for suitable $c_0\in [1,\infty)$.
\end{lem}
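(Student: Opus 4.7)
The plan for part (i) is to exploit the bipartite structure of $\mathbb{Z}^d$. Conditionally on the even field $(\varphi_x)_{x\in \Lambda_e}$, the Hamiltonian ${H}_\Lambda$ in \eqref{eq:nonconvex1} splits as a sum over odd vertices $y\in \Lambda_{\textnormal{o}}$, since each odd site interacts only with its $2d$ even neighbors. The assumption $\Lambda\in F^*$ together with the identity $\partial\Lambda=\tilde{\partial}\Lambda_e$ guarantees that every odd neighbor of a site in $\Lambda_e$ lies in $\Lambda_{\textnormal{o}}$, and that any missing even neighbors of an odd vertex sit in $\tilde{\partial}\Lambda_e$, where the boundary condition forces $\varphi=0$. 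For each $y\in \Lambda_{\textnormal{o}}$ I would integrate out $\varphi_y$ after the change of variable $s=\varphi_y-\varphi_x$, with $x$ any fixed even neighbor of $y$; this is legitimate by the quadratic lower bound in \eqref{eq:noncon2}, and produces exactly $\exp(-\widetilde{V}_\beta((\varphi_{y+e_i}-\varphi_x)_{i\in \mathbb{U}}))$ as defined in \eqref{eq:noncon4}. Crucially, the value of the integral does not depend on the choice of anchor $x$, because shifting all $\eta_i$ in \eqref{eq:noncon4} by a common constant is compensated by the corresponding shift of $s$; this allows one to rewrite the single-anchor expression as a uniform average over the $2d$ even neighbors of $y$, giving the $\frac{1}{2d}$ factor in \eqref{eq:noncon3} and recasting the sum as one indexed by $x\in \Lambda_e\cup\tilde{\partial}\Lambda_e$. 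Identity \eqref{eq:noncon5} then follows from Fubini applied to an arbitrary $A\in \mathcal{F}_{\Lambda_e}$.

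The conditions \eqref{eq:smooth}, \eqref{eq:sym}, \eqref{eq:trans_inv} in part (ii) are essentially built into the construction: smoothness of $V_X$ reduces to that of $\widetilde{V}_\beta$, which follows by differentiation under the integral sign in \eqref{eq:noncon4}, the quadratic lower bound on $V$ providing the required domination up to $C^{2,\alpha}$; the evenness of $\widetilde{V}_\beta$ under $\eta\mapsto -\eta$ is immediate from $V(-\cdot)=V(\cdot)$ and the substitution $s\mapsto -s$ inside \eqref{eq:noncon4}; and translation invariance is obvious since $V$ is site-independent in \eqref{eq:nonconvex1}. The main work is the uniform convexity bound \eqref{eq:elliptic} under $\sqrt{\beta}\Vert g''\Vert_{L^1(\mathbb{R})}<c_{24}$. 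Using the probabilistic interpretation of \eqref{eq:noncon4}, a direct differentiation yields
\begin{equation*}
\partial^2_{ij}\widetilde{V}_\beta(\eta) = \beta\langle V''(\eta_i-s)\rangle_{\eta}\delta_{ij}-\beta^2\textnormal{Cov}_\eta\bigl(V'(\eta_i-s),V'(\eta_j-s)\bigr),
\end{equation*}
where $\langle\cdot\rangle_\eta$ denotes expectation under the one-dimensional probability measure on $\mathbb{R}$ with density proportional to $s\mapsto \exp(-\beta\sum_{k\in \mathbb{U}} V(\eta_k-s))$.

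When $g\equiv 0$ this reference measure is log-concave with convexity constant of order $\beta$, so the Brascamp--Lieb inequality applied to the Lipschitz function $V'=U'$ produces a uniform-in-$\eta$ upper bound of order $\beta^{-1}$ on the covariance, and combined with $c^{-1}\le U''\le c$ this yields two-sided control of $\partial^2_{ij}\widetilde{V}_\beta$ of order $\beta$ with constants depending only on $c$. For $g\not\equiv 0$, the decomposition $V''=U''+g''$ separates a convex main term from a perturbation; the $g''$-contribution to the diagonal is bounded by $\beta\Vert g''\Vert_{L^1(\mathbb{R})}$ times the supremum of the density of $\langle\cdot\rangle_\eta$, which a standard log-concavity estimate places at order $\sqrt{\beta}^{-1}$, yielding precisely the quantity $\sqrt{\beta}\Vert g''\Vert_{L^1(\mathbb{R})}$ appearing in the smallness condition. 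The covariance term is handled analogously, using an integration by parts (legitimate because $g$ has compact support by \eqref{eq:noncon2}) to convert factors of $g'$ into factors of $g''$ and applying Cauchy--Schwarz to absorb the remaining $\beta^2$ against the log-concave reference variance. Translating back to $V_X$ via \eqref{eq:noncon4.0}, \eqref{eq:elliptic} then follows by summing the contributions of the at most two odd ``bridges'' $y$ between any pair of $\widetilde{\mathcal{G}}$-neighbors $x,x'$, and observing that $\partial^2_{x,x'}V_X$ vanishes identically when $(x,x')\notin\widetilde{\mathscr{E}}$, since no odd vertex of $\mathbb{Z}^d$ is adjacent to both $x$ and $x'$ in that case. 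The main obstacle I expect is arranging the perturbative expansion so that the non-convex contribution is measured in the norm $\sqrt{\beta}\Vert g''\Vert_{L^1}$ rather than a cruder supremum norm on $g$ or $g'$: a direct $L^\infty$-type estimate would not deliver a threshold $c_{24}$ that is uniform in the dimension $d$ and in the precise shape of the reference potential $U$.
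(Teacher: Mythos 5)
Your argument for part (i) is the same as the paper's: split the edge set as a disjoint union over odd sites $y\in\Lambda_{\text{o}}$ of the $2d$ edges meeting $y$, integrate out $\varphi_y$ to produce $e^{-\widetilde{V}_\beta}$, use the shift-invariance of $\widetilde{V}_\beta$ under a common translation of its arguments to re-anchor at any even neighbor, and average over the $2d$ anchors to obtain the $\tfrac{1}{2d}$-weighted sum indexed by even sites. One bookkeeping point you pass over: after re-indexing, the sum over $x\in\Lambda_e\cup\tilde\partial\Lambda_e$ in \eqref{eq:noncon3} contains extra terms coming from odd vertices $y\in\mathbb{Z}_{\text{o}}^d\setminus\Lambda_{\text{o}}$ adjacent to boundary sites $x\in\tilde\partial\Lambda_e$; the paper isolates these as a boundary term $B(\varphi)$, observes it depends on $\varphi|_{\Lambda^c}$ alone (hence is a constant under the $0$-boundary condition), and absorbs it into the normalization. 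Your remark about missing even neighbors addresses a different issue and does not account for this discrepancy between the two Hamiltonians, though it is easily repaired.

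For part (ii) the routes genuinely diverge: the paper does not prove the uniform convexity of $\widetilde{V}_\beta$ at all, but invokes Theorem 3.4 and Remark 3.12 of \cite{DC12} (with $q=1$), only verifying directly the trivial vanishing of $\partial^2_{x,y}V_X$ for $(x,y)\notin\widetilde{\mathscr{E}}$ — which you also do, by the same observation that all $y+e_i$ are mutual $\widetilde{\mathcal{G}}$-neighbors. Your self-contained sketch via the exact Hessian identity $\partial^2_{ij}\widetilde{V}_\beta=\beta\langle V''\rangle_\eta\delta_{ij}-\beta^2\mathrm{Cov}_\eta(V'(\eta_i-\cdot),V'(\eta_j-\cdot))$, Brascamp--Lieb for the log-concave reference measure, and the $O(\beta^{-1/2})$ density bound explaining the norm $\sqrt{\beta}\Vert g''\Vert_{L^1}$, is essentially a reconstruction of the argument behind the cited result, and the strategy is sound. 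The one step that is thinner than it needs to be is the \emph{lower} bound $c_0^{-1}\leq -\partial^2_{x,y}$ required by \eqref{eq:elliptic} for $\widetilde{\mathcal{G}}$-edges: this needs a quantitative lower bound on $\mathrm{Cov}_\eta(U'(\eta_i-s),U'(\eta_j-s))$, which does not follow from Brascamp--Lieb (an upper bound) but rather from its companion reverse estimate (a spectral-gap/variance lower bound using $U''\leq c$ and the FKG positivity of the covariance of two decreasing functions of $s$). Since the paper delegates all of this to \cite{DC12}, your proposal is, modulo that detail, a legitimate and more explicit alternative.
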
 

\begin{proof} We first show \textit{(i)}. Because $\Lambda \in F^*$, the set of unordered nearest-neighbor edges in $\mathbb{Z}^d$ with at least one endpoint in $\Lambda$ can be written as $\bigcup_{y \in\Lambda_0}\bigcup_{x: |x-y|_1 = 1}\{ x,y\}$, and the union is disjoint. Hence, by symmetry of $H_{\Lambda}(\varphi)$, see \eqref{eq:nonconvex1}, we can write, for $A \in \mathcal{F}_{\Lambda_e}$ and $\beta > 0$,
\begin{equation}\label{eq:noncon5.00}
\begin{split}
{\mu}_{\Lambda,\beta}^0 (A) &= ({Z}_{\Lambda, \beta}^{0})^{-1} \int 1_A(\varphi_e)\exp \Big[ - \beta \sum_{y  \in\Lambda_0} \sum_{x: |x-y|_1 = 1} V(\varphi_x-\varphi_y)\Big] \prod_{x \in \Lambda} d\varphi_x \prod_{z \notin \Lambda} \delta_0(\varphi_z)\\
&\stackrel{\eqref{eq:noncon4}}{=} ({Z}_{\Lambda, \beta}^{0})^{-1} \int 1_A(\varphi_e) \Big[\prod_{y\in \Lambda_0}e^{-\widetilde{V}_{\beta}((\varphi_{y+e_i})_{i \in \mathbb{U}})} \Big]\prod_{x \in \Lambda_e} d\varphi_x \prod_{z \notin \Lambda} \delta_0(\varphi_z).
\end{split}
\end{equation}
But $\widetilde{V}_{\beta}((\varphi_{y+e_i})_{i \in \mathbb{U}}) = \widetilde{V}_{\beta}((\varphi_{y+e_i} -\varphi_x)_{i \in \mathbb{U}})$ for any $x$, which follows from \eqref{eq:noncon4}, and therefore
\begin{equation}\label{eq:noncon5.01}
\sum_{y\in \Lambda_0} \widetilde{V}_{\beta}((\varphi_{y+e_i})_{i \in \mathbb{U}}) = \sum_{y\in \Lambda_0} \frac{1}{2d} \sum_{x: |x-y|_1=1} \widetilde{V}_{\beta}((\varphi_{y+e_i} -\varphi_x)_{i \in \mathbb{U}}) \stackrel{\eqref{eq:noncon3}}{=} \widetilde{H}_{\Lambda_e, \beta}(\varphi_e) - B(\varphi), 
\end{equation}
where 
$$
B(\varphi)= \sum_{x \in \tilde{\partial}\Lambda_e} \frac{1}{2d} \sum_{y \in \mathbb{Z}_o^d\setminus \Lambda_o: |y-x|_1=1} \widetilde{V}_{\beta}((\varphi_{y+e_i} -\varphi_x)_{i \in \mathbb{U}}).
$$
The boundary term $B(\cdot)$ is a function of $\varphi_{|_{\Lambda^c}}$ alone (this is because, if $x \in \tilde{\partial}\Lambda_e =\partial \Lambda$ and $|y-x|_1=1$ but $y \notin \Lambda$, then $\text{dist}_{\ell^1}(y, \Lambda)=2$). Thus, inserting \eqref{eq:noncon5.01} into \eqref{eq:noncon5.00}, the claim \eqref{eq:noncon5} readily follows, in view of \eqref{eq:noncon3.0}.

We now argue that \textit{(ii)} holds. The potential $V_X$ defined in \eqref{eq:noncon4.0} inherits properties \eqref{eq:smooth}, \eqref{eq:sym} from $V$, see \eqref{eq:noncon2} and \eqref{eq:noncon4}, and \eqref{eq:trans_inv} is plain. The last part of the assertion is a direct consequence of Theorem 3.4 and Remark 3.12 in \cite{DC12}, applied with $q=1$. Note in particular that the last part of condition \eqref{eq:elliptic}, which requires $\partial^2_{x,y}V_X =0$ for any $x\neq y$ with $(x,y) \notin \widetilde{\mathscr{E}}$, is trivially satisfied, because $\widetilde{V}_{\beta}((\varphi_{y+e_i} - \varphi_x)_{i \in \mathbb{U}})$, for fixed $y \in \mathbb{Z}_o^d$, and $x$ with $|x-y|_1=1$, cf. \eqref{eq:noncon4.0}, is a function of $(\varphi_{y+e_i})_{i\in \mathbb{U}}$, and any two points $y+e_i$, $y+e_j$, $i \neq j$, are neighbors in $\widetilde{\mathcal{G}}$ by definition, i.e. $e_i - e_j = e_i +e_{-j} \in \widetilde{\Gamma}$.
\end{proof}

\begin{remark} \label{R:gensetup} In view of Lemma \ref{L:noncon6}, the measure $\tilde{\mu}_{{\Lambda_e}, \beta}^0$ can be viewed as the restriction of ${\mu}_{{\Lambda}, \beta}^0$ to the even field, for $\beta > 0$ and $\Lambda \in F^*$. Moreover, when $\beta$ is sufficiently small, it is within the realm of the setup in Section \ref{S:1}. In particular, for such $\beta$, the effective Hamiltonian \eqref{eq:noncon3} yields a random-walk representation for covariances of $\tilde{\mu}_{{\Lambda_e}, \beta}^0$ by means of Lemma \ref{L:HS}. The corresponding quenched walk is a jump process on the graph $\widetilde{\mathcal{G}}$ (which is not isomorphic to $\mathcal{G}$, hence our somewhat general setup). \hfill $\square$
\end{remark}

We first note that Lemma \ref{L:noncon6} yields tightness at high temperatures.
\begin{lem} \label{L:tightnoncon}$(\Lambda \in F^*, \, \text{$V$ as in \eqref{eq:noncon2.0}, \eqref{eq:noncon2}})$
\begin{equation}\label{eq:tight2}
{\mathscr{W}}_{\beta} (V) \neq \emptyset, \text{ for all $\sqrt{\beta}\Vert g'' \Vert_{L^1(\mathbb{R})}< c_{24}$.}
\end{equation}
\end{lem}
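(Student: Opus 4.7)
The plan is to show that the family $\{\mu_{\Lambda,\beta}^0 : \Lambda \in F^*\}$ is tight on $\Omega$ endowed with the product topology; a subsequential weak limit then yields an element of $\mathscr{W}_\beta(V)$. By the product structure of $\Omega = \mathbb{R}^{\mathbb{Z}^d}$, this reduces to showing that, for each $x \in \mathbb{Z}^d$, the law of $\varphi_x$ is tight as $\Lambda$ varies over $F^*$. I would treat even and odd sites separately.

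For $x \in \mathbb{Z}_e^d$, Lemma \ref{L:noncon6}(i) identifies the even-site marginal, $\mathbb{E}_{\mu_{\Lambda,\beta}^0}[e^{\varphi_x}] = \mathbb{E}_{\tilde{\mu}_{\Lambda_e,\beta}^0}[e^{\varphi_x}]$, while Lemma \ref{L:noncon6}(ii) places $\tilde{\mu}_{\Lambda_e,\beta}^0$ within the framework of Section \ref{S:1} on the graph $\widetilde{\mathcal{G}}$ for a suitable $c_0 \in [1,\infty)$ --- this is where the hypothesis $\sqrt{\beta}\|g''\|_{L^1(\mathbb{R})} < c_{24}$ enters. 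Mimicking \eqref{eq:tight} (with $\widetilde{\mathcal{G}}$ in place of $\mathcal{G}$), Brascamp-Lieb (Lemma \ref{L:BL}) applied to $\tilde{\mu}_{\Lambda_e,\beta}^0$ yields
\begin{equation*}
 \sup_{\Lambda \in F^*} \mathbb{E}_{\mu_{\Lambda,\beta}^0}[e^{\varphi_x}] \;\leq\; \exp\!\Big(\tfrac{c_0}{2}\, \widetilde{g}^*(x,x)\Big) \;<\; \infty,
\end{equation*}
where $\widetilde{g}^*$ denotes the Green function of simple random walk on $\widetilde{\mathcal{G}}$, finite because $\widetilde{\mathcal{G}}$ has bounded geometry and $d \geq 3$. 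This implies tightness of the marginal at even sites.

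For $x \in \mathbb{Z}_o^d$, I would exploit the Gibbs specification: since $\Lambda \in F^*$, all $2d$ nearest neighbors of $x$ in $\mathcal{G}$ lie in $\Lambda_e$, so the conditional density of $\varphi_x$ given $(\varphi_y)_{y\neq x}$ reads
\begin{equation*}
 p\big(s \,\big|\, (\varphi_y)_{y\neq x}\big) \;\propto\; \exp\!\Big(-\beta \sum_{y:|y-x|_1=1} V(\varphi_y - s)\Big).
\end{equation*}
From \eqref{eq:noncon2} and standard bounds on $U''$ and $g$, one derives a quadratic sandwich $A\eta^2 - B \leq V(\eta) \leq (c/2)\eta^2 + C'$ for suitable constants. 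Writing $\bar s = (2d)^{-1}\sum_{y\sim x}\varphi_y$ and $\Sigma_e^2 = \sum_{y\sim x}(\varphi_y - \bar s)^2$, and using the identity $\sum_{y\sim x}(\varphi_y - s)^2 = \Sigma_e^2 + 2d(s-\bar s)^2$, a direct Gaussian-ratio computation at numerator and denominator gives
\begin{equation*}
 \mathbb{E}_{\mu_{\Lambda,\beta}^0}\!\big[(\varphi_x - \bar s)^2 \,\big|\, (\varphi_y)_{y\neq x}\big] \;\leq\; C(\beta)\exp\!\big(\beta(c/2 - A)\Sigma_e^2\big),
\end{equation*}
with $C(\beta)$ independent of the conditioning.

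To promote this into marginal tightness of $\varphi_x$, I would fix $\varepsilon > 0$ and, using the bound at even sites from the previous step, pick $M$ with $\mu_{\Lambda,\beta}^0(\max_{y\sim x}|\varphi_y| > M) < \varepsilon$ for all $\Lambda \in F^*$. On the complementary event $|\bar s| \leq M$ and $\Sigma_e^2 \leq 2dM^2$, so the conditional second moment is bounded by a deterministic constant $L_M < \infty$. Markov's inequality then yields $\mu_{\Lambda,\beta}^0(|\varphi_x| > R) \leq \varepsilon + L_M/R^2 < 2\varepsilon$ for $R$ large, uniformly in $\Lambda \in F^*$. The key difficulty is that Brascamp-Lieb cannot be applied directly to the non-convex measure $\mu_{\Lambda,\beta}^0$: the detour through $\tilde{\mu}_{\Lambda_e,\beta}^0$ handles even sites, and the conditional-density comparison bridges to odd sites, requiring no additional smallness of $\beta$ beyond what is imposed by Lemma \ref{L:noncon6}(ii).
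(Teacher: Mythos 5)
Your proposal is correct and follows essentially the same route as the paper: even sites are handled by the reduction to $\tilde{\mu}_{\Lambda_e,\beta}^0$ via Lemma \ref{L:noncon6} and the Brascamp--Lieb bound as in \eqref{eq:tight}, and odd sites by conditioning on their (even or zero-boundary) neighbors and exploiting the quadratic growth of $V$ from \eqref{eq:noncon2}, splitting on whether those neighbors are bounded. The only difference is cosmetic: the paper bounds the conditional single-site \emph{exponential} moment uniformly over boundary data $|\xi|_\infty\leq 1$ to get uniform exponential moments at all sites, whereas you use a conditional second-moment (Gaussian-ratio) estimate plus Markov's inequality, which yields the same tightness conclusion.
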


\begin{proof}
On account of Lemma \ref{L:noncon6}, \textit{(ii)}, Lemma \ref{L:BL} applies, yielding, for all $\sqrt{\beta}\Vert g'' \Vert_{L^1(\mathbb{R})}< c_{24}$, $\Lambda \in F^*$,
\begin{equation}\label{eq:00final00}
\sup_{\Lambda} \sup_{x \in\Lambda_e} \mathbb{E}_{{\mu}_{\Lambda,\beta}^0}[e^{\varphi_x}] = \sup_{\Lambda} \sup_{x \in\Lambda_e} \mathbb{E}_{\tilde{\mu}_{\Lambda_e,\beta}^0}[e^{\varphi_x}] < \infty.
\end{equation}
For $x \in \Lambda_o$, write
\begin{equation*}
\mathbb{E}_{{\mu}_{\Lambda,\beta}^0}[e^{\varphi_x}] \stackrel{\eqref{eq:CONDEXP}}{\leq} \sup_{|\xi|_{\infty} \leq 1} \mathbb{E}_{{\mu}_{\{0\},\beta}^\xi}[e^{\varphi_0}] + 2d \max_{x \in\Lambda_e} {\mu}_{\Lambda,\beta}^0(|\varphi_x |> 1),
\end{equation*}
the first term on the right-hand side is finite by assumption \eqref{eq:noncon2}, the second one is bounded uniformly in $\Lambda$ using \eqref{eq:00final00}. All in all, $\sup_{\Lambda\in F^*} \sup_{x \in\Lambda} \mathbb{E}_{{\mu}_{\Lambda,\beta}^0}[e^{\varphi_x}]< \infty$, for all $\beta> 0$ such that $\sqrt{\beta}\Vert g'' \Vert_{L^1(\mathbb{R})}< c_{24}$, which implies that the family $\{ {\mu}_{\Lambda,\beta}^0; \Lambda \in F^*\}$ is tight, cf. \eqref{eq:tight}, and \eqref{eq:tight2} follows.
\end{proof}

Finally, we state our main result regarding the connectivity decay of level sets of measures $\mu_{\beta} \in {\mathscr{W}}_{\beta} (V)$ above large levels and at high temperatures, for the class of non-convex potentials $V$ considered above. Recall the definition of $h^+(\cdot)$ in \eqref{eq:renorm10}.

\begin{thm} $(V \, \text{as in }  \eqref{eq:noncon2.0}, \eqref{eq:noncon2}, \, \sqrt{\beta}\Vert g'' \Vert_{L^1(\mathbb{R})}< c_{24})$ \label{T:noncon}
\begin{equation}\label{eq:noncon10}
h^+(\mu_{\beta})< \infty, \text{ for all $\mu_{\beta} \in {\mathscr{W}}_{\beta} (V).$}
\end{equation}
\end{thm}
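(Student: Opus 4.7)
The plan is to reduce to the convex case via the even-sublattice representation. By Lemma~\ref{L:noncon6}(i), the marginal $\tilde\mu_\beta$ of $\mu_\beta$ on $\mathcal{F}_{\mathbb{Z}_e^d}$ is the weak limit of the finite-volume measures $\tilde\mu^0_{(\Lambda_n)_e,\beta}$ from \eqref{eq:noncon3.0}, which by Lemma~\ref{L:noncon6}(ii) are of gradient-Gibbs form \eqref{eq:mu_Lambda} on the graph $\widetilde{\mathcal{G}} = (\mathbb{Z}_e^d, \widetilde{\mathscr{E}})$ with potentials $\{V_X\}$ as in \eqref{eq:noncon4.0} satisfying \eqref{eq:smooth}--\eqref{eq:elliptic} under the standing assumption $\sqrt{\beta}\Vert g''\Vert_{L^1(\mathbb{R})} < c_{24}$. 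Thus $\tilde\mu_\beta$ lies in the class $\widetilde{\mathscr{W}}$ built exactly as in \eqref{eq:W} but on $\widetilde{\mathcal{G}}$, with $\Gamma$ replaced by $\widetilde\Gamma = \{x \in \mathbb{Z}^d : |x|_1 = 2\}$ and $\mathbb{Z}_e^d$ identified with $\mathbb{Z}^d$; this is precisely the scenario anticipated in Remark~\ref{R:gensetup}.

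The first step is to apply Theorem~\ref{T:subcrit} directly to $\tilde\mu_\beta$ on $\widetilde{\mathcal{G}}$. Its proof chain (Proposition~\ref{L:monot}, Lemma~\ref{L:error}, Lemma~\ref{L:renorm1}, Corollary~\ref{C:DI1}) is formulated within the general framework of Section~\ref{S:1}, and therefore transfers \emph{verbatim} to $\widetilde{\mathcal{G}}$ once the standard potential-theoretic inputs --- heat-kernel bounds \eqref{eq:HKLB}--\eqref{eq:HKUB}, Green-function bounds \eqref{eq:GF_bounds}, the capacity identity \eqref{eq:cap2}, and the Brascamp--Lieb estimate Lemma~\ref{L:BL} --- are produced for simple random walk on $\widetilde{\mathcal{G}}$. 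These are routine since $\widetilde{\mathcal{G}}$ is a transient bounded-geometry graph with $d$-dimensional volume growth. The conclusion is $\tilde h_+ := h^+_{\widetilde{\mathcal{G}}}(\tilde\mu_\beta) < \infty$, that is, for every $h > \tilde h_+$ there exist $c(h), c'(h), \rho(h) > 0$ such that, for all $x'',y'' \in \mathbb{Z}_e^d$,
\begin{equation} \label{eq:noncon_plan1}
\tilde\mu_\beta\bigl(x'' \stackrel{\geqslant h}{\longleftrightarrow} y''\ \text{in}\ \widetilde{\mathcal{G}}\bigr) \leq c(h)\, e^{-c'(h)|x''-y''|^{\rho(h)}}.
\end{equation}

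The second step is a short combinatorial transfer to $\mathcal{G}$. If $x_0,x_1,\dots,x_n$ is a nearest-neighbor path in $\mathcal{G}$ with $\varphi_{x_i} \geq h$ for all $i$ and $x_0$ even, then every difference $x_{2k+2}-x_{2k} = \pm e_i \pm e_j$ is either zero or an element of $\widetilde\Gamma$, so the sub-sequence of even-indexed vertices (after collapsing consecutive repeats) forms a $\widetilde{\mathcal{G}}$-path along which $\varphi \geq h$. Consequently, for arbitrary $x,y \in \mathbb{Z}^d$, the event $\{x \stackrel{\geqslant h}{\longleftrightarrow} y\ \text{in}\ \mathcal{G}\}$ forces the existence of even vertices $x'',y''$ with $|x''-x|_1, |y''-y|_1 \leq 1$ (taking $x''=x$ whenever $x$ is even, and similarly for $y$) such that $\{x'' \stackrel{\geqslant h}{\longleftrightarrow} y''\ \text{in}\ \widetilde{\mathcal{G}}\}$ occurs. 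Since this latter event is $\mathcal{F}_{\mathbb{Z}_e^d}$-measurable, its $\mu_\beta$- and $\tilde\mu_\beta$-probabilities coincide; a union bound over the at most $(2d)^2$ admissible pairs $(x'',y'')$, combined with \eqref{eq:noncon_plan1} and $|x''-y''|_1 \geq |x-y|_1 - 2$, yields the required stretched exponential decay of $\mu_\beta(x \stackrel{\geqslant h}{\longleftrightarrow} y)$ for every $h > \tilde h_+$, and hence $h^+(\mu_\beta) \leq \tilde h_+ < \infty$.

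I expect the main burden to lie not in the transfer step, which is elementary, but in confirming that each ingredient of the decoupling machinery --- sprinkling, error-term estimate, renormalization cascade --- retains its validity on $\widetilde{\mathcal{G}}$ with the many-body potentials $V_X$ of \eqref{eq:noncon4.0} in place of two-body ones. This was however explicitly anticipated by the generality chosen in Sections~\ref{S:1}--\ref{S:apps}, so no genuinely new ideas are needed; the new ingredients proper to the non-convex setting are the even-sublattice integration formula and the high-temperature recovery of convexity, both already supplied by Lemma~\ref{L:noncon6}.
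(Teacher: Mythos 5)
Your proposal is correct and takes essentially the same route as the paper: reduction to the even field via Lemma \ref{L:noncon6}, observation that $\tilde\mu_\beta$ is a weak limit of the convex gradient measures \eqref{eq:noncon3.0} on $\widetilde{\mathcal{G}}$ so that the decoupling/renormalization machinery of Sections \ref{S:sprinkle}--\ref{S:apps} applies there (as anticipated by the general setup of Section \ref{S:1}), followed by the elementary path-trace transfer back to $\mathcal{G}$. The only cosmetic difference is that you invoke Theorem \ref{T:subcrit} as a black box for point-to-point connectivity on $\widetilde{\mathcal{G}}$ and conclude by a union bound over nearby even endpoints, whereas the paper compares $\mu(x \stackrel{\geqslant h}{\longleftrightarrow} \partial B_{\mathcal{G}}(x,4L+1))$ with the crossing events $\widetilde{E}^h_{L,x}$ and re-runs the renormalization argument for $\tilde\mu$ directly.
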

\begin{proof}
Let $\mu \equiv \mu_{\beta} \in {\mathscr{W}}_{\beta} (V)$. We remove the $\beta$-dependence of all quantities throughout the proof, keeping in mind that $\sqrt{\beta}\Vert g'' \Vert_{L^1(\mathbb{R})}< c_{24}$. Write $\tilde{\mu} = \mu_{\vert_{\widetilde{\mathcal{F}}}}$, where $\widetilde{\mathcal{F}}=\sigma(\varphi_x; x \in \mathbb{Z}_e^d)$.
Observe that, if $\pi$ is a nearest-neighbor path in $\mathcal{G} = ({\mathbb{Z}}^d, {\mathscr{E}})$, the usual Euclidean lattice, then, by definition of $\widetilde{\Gamma}$, its trace on $\widetilde{\mathcal{G}}$ (i.e. the ordered sequence of vertices in $\mathbb{Z}^d_e$ visited by $\pi$) is a nearest neighbor path in $\widetilde{\mathcal{G}}= ({\mathbb{Z}}_e^d, \widetilde{\mathscr{E}})$. Hence, writing $\hat{x}=x$, if $x \in \mathbb{Z}^d_e$, and $\hat{x}= x+e_1$ if $x \in \mathbb{Z}^d_o$, and introducing the events $\widetilde{E}_{L,x}^h=\{ B_{\widetilde{\mathcal{G}}}(\hat{x},L) \stackrel{\geqslant h}{\longleftrightarrow} \partial B_{\widetilde{\mathcal{G}}}(\hat{x},2L) \}$, for $x \in \mathbb{Z}^d$, $L \geq 1$, referring to the existence of a nearest-neighbor path of sites in $\widetilde{\mathcal{G}}$ with field value exceeding $h$
joining $B_{\widetilde{\mathcal{G}}}(\hat{x},L)$, the ball of radius $L$ (in the graph distance for $\widetilde{\mathcal{G}}$) around $\hat x$, to the boundary of a concentric ball of radius $2L$, one obtains
\begin{equation}\label{eq:noncon11}
\mu(x \stackrel{\geqslant h}{\longleftrightarrow} \partial B_{\mathcal{G}}(x,4L+1)) \leq \mu(\widetilde{E}_{L,x}^h)= \tilde{\mu}(\widetilde{E}_{L,x}^h), \text{ for all $L \geq 1$, $x\in \mathbb{Z}^d$}.
\end{equation}
Now, if $(\Lambda_n)_{n \geq 0}$ is a sequence of increasing sets in $F^*$ such that ${\mu}_{\Lambda_n}^0 \stackrel{w}{\rightarrow} \mu$, then, by \eqref{eq:noncon5}, and with a slight abuse of notation, $\tilde{\mu}_{\Lambda_e}^0 \stackrel{w}{\rightarrow} \tilde{\mu}$. Hence, by Lemma \ref{L:noncon6}, \textit{(ii)}, which applies due to our assumption on $\beta$, we infer that Theorem \ref{P:dec_ineq} applies to $\tilde{\mu}$, viewed as a measure on $\Omega^{\widetilde{\mathcal{G}}}$ (in particular, the reference measure $P^*$ appearing in \eqref{eq:eta_error} refers to the law of simple random walk on $\widetilde{\mathcal{G}}$). Thus, mimicking the proof of Theorem \ref{T:subcrit}, using the inequality \eqref{eq:dec_ineq}, applied to $\tilde{\mu}$, we deduce that $\tilde{\mu}(\widetilde{E}_{L,x}^h) \leq ce^{-c'L^{\varepsilon}}$, for some $\varepsilon > 0$ and all $L \geq 1$, if $h$ is chosen sufficiently large. In view of \eqref{eq:noncon11}, the claim follows.
\end{proof}

\begin{remark}\label{R:final}
1) (Relaxing the uniform ellipticity condition). Our main result, Theorem~\ref{P:dec_ineq} crucially hinges on two tools: the availability of the Helffer-Sj\"ostrand representation (in finite volume) to carry out the sprinkling argument of Section~\ref{S:sprinkle}, and the Brascamp-Lieb inequality to bound the arising error term. The latter seems to require the hard lower bound in \eqref{eq:elliptic}. However, one could attempt to replace the uniform upper bound by a suitable moment assumption on the conductances $a_{x,y}^{\xi}(\Phi_t)$, see \eqref{eq:RATES}, in the spirit of \cite{ACDS16}, provided one recovers Lemma~\ref{L:HS}, and adapts the comparison estimates of Section~\ref{S:error}, which requires at the very least an annealed version of \eqref{eq:GF_bounds}. 

\noindent 2) (Comparison with the GFF). In the interpolation argument, Proposition \ref{L:monot}, one may try to replace $\widetilde{\varphi}$ by the actual Gaussian free field, and attempt to compare the two objects directly (see for instance \cite{Mi11} for results in this direction in dimension $2$). In view of \eqref{eq:harmonic_ext}, this leads to the following question (for simplicity, consider $\mu_{\Lambda}^{\xi}$ the uniformly convex nearest-neighbor gradient interface Hamiltonian, with potential as in \eqref{eq:pot2body}): can one write
$$
\mu_{\Lambda}^{\xi}(\varphi \in \cdot) = \mu_{\Lambda}^0(\varphi + \widetilde{\xi}(\varphi)\in \cdot),
$$ 
for a function $ \widetilde{\xi}$ extending $\xi$ to the interior of $\Lambda$, such that $ \widetilde{\xi}$ is close to the discrete harmonic extension of $\xi$, with ``overwhelming'' probability? \hfill $\square$
\end{remark}

\bibliography{rodriguez}
\bibliographystyle{plain}

\end{document}